\documentclass[12pt]{amsproc}

\usepackage{amsmath,amssymb,amsthm}
\usepackage[margin=1.15in]{geometry}
\usepackage{hyperref}
\usepackage{booktabs}
\usepackage{colonequals}
\usepackage{tikz}
\usetikzlibrary{shapes.geometric}

\tikzset{
  vtx/.style={circle,draw,fill=black,inner sep=1.6pt},
  lyr/.style={ellipse,draw,minimum width=16mm,minimum height=26mm},
  wt/.style={font=\small,inner sep=1.5pt,fill=white},
  del/.style={dashed,thick},
}

\newtheorem{theorem}{Theorem}[section]
\newtheorem{proposition}[theorem]{Proposition}
\newtheorem{lemma}[theorem]{Lemma}
\newtheorem{corollary}[theorem]{Corollary}
\theoremstyle{definition}
\newtheorem{remark}[theorem]{Remark}

\newtheorem{example}[theorem]{Example}

\newcommand{\dD}{d^{D}}
\newcommand{\Norm}[1]{\left\lVert #1 \right\rVert}
\DeclareMathOperator{\Pf}{Pf}

\DeclareMathOperator{\diag}{diag}

\renewcommand\le{\leqslant}
\renewcommand\ge{\geqslant}

\newcommand{\calK}{\mathcal{K}}

\newcommand{\calM}{\mathcal{M}}

\newcommand{\calP}{\mathcal{P}}

\newcommand{\PP}{\mathbb{P}}
\newcommand{\R}{\mathbb{R}}
\newcommand{\C}{\mathbb{C}}

\newcommand{\ep}{\varepsilon}

\newcommand{\PM}{\calP \calM}

\newcommand{\sgn}{{\rm sgn}}

\newcommand{\ii}{{\rm i}}

\newcommand{\Sym}{{\rm Sym}}
\newcommand{\Aut}{{\rm Aut}}

\title{The effect of edge deletion on noncommutative distances on graphs}

\author{Eric Swartz}
\address[E. Swartz]{Department of Mathematics, William \& Mary, Williamsburg, VA 23187}
\email{\textcolor{blue}{\href{mailto:easwartz@wm.edu}{easwartz@wm.edu}}}
\date{\today}

\begin{document}

\begin{abstract}
For a Dirac operator $D$ on a finite weighted graph, let $d^D$ denote the associated
noncommutative (Connes) distance. We show that deleting an edge can actually decrease the noncommutative distance between two vertices in the graph.  The smallest example of this phenomenon comes from a weighted 4-cycle, and the case of deleting an edge from a weighted 4-cycle is determined completely: if $d^{D'}$ denotes the noncommutative distance in the graph after edge deletion, we prove that $\sup d^D/d^{D'} = 2/\sqrt{3}$ for the $4$-cycle, and further that this upper bound holds for all graphs.  On the other hand, we show that deleting an edge can decrease the noncommutative distance between two vertices of an $n$-cycle precisely when $n$ is divisible by $4$.

In cases when a deleted edge decreases the noncommutative distance between two vertices $x$ and $y$, we show that the deleted edge need not be incident to either of the vertices $x$ or $y$, and in fact the deleted edge can be arbitrarily far from both in the graph-theoretic (number of edges) distance and in the weighted (geodesic) distance.  Moreover, deletion of a single vertex arbitrarily far from both $x$ and $y$ in either the graph-theoretic or weighted distance can change $d^D(x,y)$ by an arbitrarily large factor.
\end{abstract}

\maketitle

\section{Introduction}

Let $G=(V,E)$ be a finite simple graph on $V=\{1,\dots,n\}$, let
$\mathcal A\subset M_n(\mathbb C)$ be the algebra of diagonal matrices, and let
$D=(d_{ij})\in M_n(\mathbb R)$ be symmetric with nonnegative entries satisfying the
compatibility condition $i\ne j$, $\{i,j\}\notin E\Rightarrow d_{ij}=0$. Following
\cite{Con94} (see also \cite[(5)]{IKM01}, \cite[(8)]{Bes21}, and \cite[Definition 2.2]{CLPS25}), the
associated \textit{noncommutative distance} is
\begin{equation}\label{eq:def}
  \dD(i,j)=\sup\bigl\{\,|a_i-a_j| \;:\; A=\operatorname{diag}(a_1,\dots,a_n),\ \Norm{[A,D]}\le 1 \,\bigr\}.
\end{equation}
We write $w_{ij}=w(i,j):=d_{ij}^{-1}\in(0,+\infty]$ for the \textit{weight} of the edge
$\{i,j\}$, using the functional form $w(i,j)$ when the indices are cumbersome, and
$\ell^{D}$ for the geodesic (weighted) distance they induce. Note that $d^D \le \ell^D$; see \cite[Theorem 1]{Bes21}.  We say that the diagonal matrix $A$ is \textit{admissible} (for $D$) if $\Norm{[A,D]} \le 1$; note that we may assume that $A \in M_n(\R)$ (see, for instance, \cite[Section 2.3]{CLPS25}).

This paper is concerned with the effects of \textit{deleting an edge} on the noncommutative distance between two vertices.  In the noncommutative distance setting, deleting an edge $e=\{p,q\}$ means passing from $D$ to $D' = (d'_{ij})$, where $D'$ agrees with $D$ except that $d'_{pq}=d'_{qp}=0$.  It is known that deleting a vertex (weakly) increases $\dD$ \cite[Proposition 4(1)]{IKM01}, and it is clear that deleting an edge (weakly) increases the distance between two vertices in both the graph-theoretic and weighted metrics.  Besnard \cite[Section 1 and Section 3.1]{Bes21} asked the natural question of whether
\begin{equation}
  \label{eq:question}
  d^{D'}(x,y) \ge \dD(x,y)
\end{equation}
always holds, and the question was echoed by Clare, Li, Poon, and the author \cite[Remark 2.3]{CLPS25}.  The best result along these lines so far comes from combining \cite[Theorem 1, Theorem 5]{Bes21}: if $\Delta(H)$ denotes the maximum degree of the graph $H$, then
\[ d^{D'}(x,y) \ge \frac{\ell^{D'}(x,y)}{\Delta(G - e)} \ge \frac{\ell^{D}(x,y)}{\Delta(G - e)} \ge \frac{\dD(x,y)}{\Delta(G - e)}.\]

As it turns out, the noncommutative distance is not as well behaved as we might hope: indeed, there exist examples in which deletion of an edge \textit{decreases} the noncommutative distance between two vertices.

\begin{theorem}
\label{thm:main}
 Let $0 < \ep < 2/5$ and let $D$ be the Dirac operator of the $4$-cycle $1 - 2 - 3 - 4 - 1$ with entries
 \[ d_{12} = \ep, d_{23} = d_{34} = d_{41} = 1.\]
 Then,
 \[ \dD(1,2) \ge \kappa(\ep) \colonequals \frac{2}{\sqrt{5\ep^2 - 2\ep + 1}},\]
 while deleting the edge $\{1,2\}$ yields $d^{D'}(1,2) = 2 < \dD(1,2)$.  In particular, if $\ep = 1/5$, then $\dD(1,2) \ge \sqrt{5} > 2$.
\end{theorem}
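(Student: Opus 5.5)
The plan is to use the structure of real antisymmetric $4\times 4$ matrices. For a diagonal real $A=\diag(a_1,\dots,a_4)$ the commutator $M=[A,D]$ has entries $m_{ij}=(a_i-a_j)d_{ij}$, so $M$ is real antisymmetric. Its singular values come in pairs $\pm s_1,\pm s_2$, and they satisfy
\[ s^4-\Bigl(\textstyle\sum_{i<j}m_{ij}^2\Bigr)s^2+\Pf(M)^2=0, \qquad \Pf(M)=m_{12}m_{34}-m_{13}m_{24}+m_{14}m_{23}. \]
Hence, writing $S=\sum_{i<j}m_{ij}^2$, we have $\Norm{M}\le 1$ if and only if
\[ S\le 2 \quad\text{and}\quad 1-S+\Pf(M)^2\ge 0 . \]
Both halves of the theorem reduce to this criterion. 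Note that $m_{13}=m_{24}=0$ for both $D$ and $D'$.

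\emph{The deleted graph.} After deleting $\{1,2\}$, the graph is the path $2-3-4-1$ with all $d$-entries equal to $1$. Put $x=a_2-a_3$, $y=a_3-a_4$, $z=a_4-a_1$. Then $S=x^2+y^2+z^2$ and $\Pf=\pm xz$, so admissibility says
\[ x^2+y^2+z^2\le 2 \quad\text{and}\quad y^2\le (1-x^2)(1-z^2). \]
The first condition, together with $x^2z^2\ge 0$, forces $x^2,z^2\le 1$. Therefore
\[ |a_2-a_1|\le |x|+|z|+\sqrt{(1-x^2)(1-z^2)}\le |x|+|z|+1-\tfrac{x^2+z^2}{2} = 2-\tfrac{(1-|x|)^2+(1-|z|)^2}{2}\le 2 , \]
using AM--GM on the square root. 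Equality is attained at $x=z=1$, $y=0$, which is admissible. This gives $d^{D'}(1,2)=2$.

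\emph{The lower bound for $D$.} I exhibit an explicit admissible $A$ with $a_1-a_2=\kappa(\ep)$. Motivated by the reflection symmetry of the cycle that swaps $1\leftrightarrow 2$ and $3\leftrightarrow 4$, I try the ansatz $a=(u,-u,-v,v)$. This gives
\[ m_{12}=2u\ep,\quad m_{23}=v-u,\quad m_{34}=-2v,\quad m_{14}=u-v, \]
so that
\[ S=4u^2\ep^2+2(u-v)^2+4v^2, \qquad \Pf=-4uv\ep-(u-v)^2 . \]
With $u=\kappa/2$, I then choose $v$ (expecting $v$ negative, so that the two terms of the Pfaffian reinforce each other) so that both conditions hold. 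The natural target is to make the second condition an equality, $1-S+\Pf^2=0$, and then to verify $S\le 2$. Eliminating $v$ should produce exactly the expression $5\ep^2-2\ep+1$ under the square root. The hypothesis $\ep<2/5$ should be precisely what makes $\kappa(\ep)>2$, since $\kappa(\ep)>2$ is equivalent to $5\ep^2-2\ep<0$. Finally, at $\ep=1/5$ we get $5\ep^2-2\ep+1=4/5$, so $\kappa(1/5)=\sqrt5$.

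\emph{Main obstacle.} The hard step is finding the right $v$ as a function of $\ep$ and checking the two inequalities cleanly. If the symmetric ansatz does not reproduce $\kappa(\ep)$ exactly, I would instead take the general $A$ (normalized by $a_2=0$) and impose that both singular values of $M$ equal $1$, i.e.\ $S=2$ and $|\Pf|=1$. This is the extremal configuration suggested by the path computation, and I would solve that system for $a_1$, which should also yield $\kappa(\ep)$.
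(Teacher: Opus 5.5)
Your computation of $d^{D'}(1,2)=2$ is correct. It is a self-contained alternative to the paper, which instead prunes and quotes the three-edge chain formula (Propositions~\ref{prop:prune} and \ref{prop:chain}). The only slip there is that $x^2,z^2\le 1$ does not follow from the first condition alone; it follows from both conditions together, or directly from $|m_{ij}|\le\Norm{M}$.

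The genuine gap is the lower bound $\dD(1,2)\ge\kappa(\ep)$, which is the substance of the theorem. You never exhibit the admissible $A$, and the heuristic you give for finding it points the wrong way.
\begin{itemize}
\item With $u=\kappa/2$, the condition $S\le 2$ becomes $6v^2-4uv+(4\ep-6\ep^2)u^2\le 0$. So $v$ must lie between $\ep u$ and $(\tfrac23-\ep)u$, both of which are positive for $0<\ep<2/5$. Every negative $v$ is therefore inadmissible.
\item Your sign reasoning is reversed: the Pfaffian terms $-4uv\ep$ and $-(u-v)^2$ have the same sign exactly when $uv>0$, not when $v<0$.
\item Write $v=\ep u+h$ and $c=(1-3\ep)\kappa$, so that $|c|<2$. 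Then $S=2+2h(3h-c)$ and $1-S+\Pf^2=h^2\bigl((h-c)^2-4\bigr)$. Hence $v=\ep u$ is the unique admissible choice, and it is a double root. Simply ``making the second condition an equality'' and ``eliminating $v$'' (with $u$ already fixed at $\kappa/2$) neither singles it out nor produces $\kappa$.
\end{itemize}

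The missing step is supplied by your own fallback, applied inside the symmetric ansatz rather than to a general $A$. Demanding that both singular values equal $1$ means $S=2$ and $\Pf=-1$. Since $S+2\Pf=4(v-\ep u)^2$, this forces $v=\ep u$. Then $\Pf=-u^2\bigl(4\ep^2+(1-\ep)^2\bigr)=-1$ gives $u=\kappa/2$.

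The resulting $A=\diag(\kappa/2,-\kappa/2,-\ep\kappa/2,\ep\kappa/2)$ is exactly the paper's witness $\diag\bigl(\kappa,0,\kappa\frac{1-\ep}{2},\kappa\frac{1+\ep}{2}\bigr)$ shifted by $\kappa/2$. The paper verifies its admissibility in one line by observing that $MM^t=\bigl((\kappa\ep)^2+(\kappa\frac{1-\ep}{2})^2\bigr)I=I$.

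Until this computation is actually carried out, your proposal establishes only $d^{D'}(1,2)=2$ and the equivalence $\kappa(\ep)>2\iff 0<\ep<2/5$. It does not establish the inequality $\dD(1,2)>d^{D'}(1,2)$.
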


Once it is known that there are such examples, we might ask precisely how large the discrepancy between $\dD$ and $d^{D'}$ can be.  Since we can always scale the weights by a common factor, what we are really asking is how large $\sup \dD/d^{D'}$ can be.  We first answer this question for $4$-cycles; along the way, we take the opportunity to correct the statement of \cite[Theorem 9]{IKM01}, the proof of which is fundamentally correct (see Proposition \ref{prop:ikm}).

\begin{theorem}
 \label{thm:sharp}
 Over all weighted $4$-cycles,
 \[ \dfrac{\dD(1,2)}{d^{D'}(1,2)} \le \frac{2}{\sqrt{3}},\]
and this upper bound is attained when $w_{12} = 3$, $w_{14} = w_{23} = 1/\sqrt{2}$, and $w_{34} = 1$, in which case $\dD(1,2) = \sqrt{3}$ and $d^{D'}(1,2) = 3/2$.
\end{theorem}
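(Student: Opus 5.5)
The plan is to make the admissibility condition $\Norm{[A,D]}\le 1$ completely explicit for a $4\times 4$ matrix, and then reduce the claimed bound to an elementary inequality in a few real variables.

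\emph{Explicit admissibility.} For real diagonal $A=\diag(a_1,\dots,a_4)$, the commutator $X=[A,D]$ is a real antisymmetric matrix with entries $X_{ij}=d_{ij}(a_i-a_j)$. Its singular values are $\lambda_1,\lambda_1,\lambda_2,\lambda_2$, where
\[ \lambda_1^2+\lambda_2^2 = S \colonequals \sum_{\{i,j\}\in E} d_{ij}^2(a_i-a_j)^2, \qquad \lambda_1\lambda_2 = |\Pf(X)|. \]
Hence $\Norm{X}\le 1$ holds if and only if
\[ S\le 2 \quad\text{and}\quad 1-S+\Pf(X)^2\ge 0. \]
For the $4$-cycle, $d_{13}=d_{24}=0$. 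So with edge differences $x=a_1-a_2$, $y=a_2-a_3$, $z=a_3-a_4$ and $-(x+y+z)=a_4-a_1$, we get
\[ \Pf(X)=d_{12}d_{34}\,xz - d_{14}d_{23}\,y(x+y+z). \]
This gives $\dD(1,2)$ as the maximum of $|x|$ subject to two explicit polynomial constraints.

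\emph{The path after deletion.} After deleting $\{1,2\}$ we have the path $1-4-3-2$. The same description applies with the $d_{12}$ terms removed:
\[ S' = S - d_{12}^2x^2, \qquad \Pf' = -d_{14}d_{23}\,y(x+y+z). \]
I would compute $d^{D'}(1,2)$ directly, which should recover the corrected form of \cite[Theorem 9]{IKM01} (Proposition \ref{prop:ikm}).

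\emph{Proving the inequality.} Given an optimal admissible $A$ for $D$ realizing $x=\dD(1,2)$, I would look for an admissible matrix for $D'$ with endpoint gap at least $(\sqrt3/2)\,x$. The first attempt is to use the same $y,z$ rescaled, or, better, the same $y,z$ re-optimized for the path. Concretely, I would show that the optimal value of the cycle problem is at most $2/\sqrt3$ times the optimal value of the path problem, by eliminating $d_{12}$. The point is that $d_{12}$ enters the cycle constraints only through $d_{12}x$, which appears both in $S$ and in the $xz$ term of $\Pf$. For fixed $x,y,z$ one can optimize over $d_{12}$ (equivalently over $u=d_{12}x$): the most favorable $u$ makes $1-S+\Pf^2$ a quadratic in $u$. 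This yields a closed form for the best cycle value in terms of $d_{14},d_{23},d_{34}$, which can then be compared with the path value.

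Normalizing by scaling, we may set $d_{34}=1$. The resulting two-parameter inequality in $(d_{14},d_{23})$ would then be proved by calculus, locating its extremum at $d_{14}=d_{23}$. I expect this to be the main obstacle: handling the interaction of the two constraints $S\le2$ and $1-S+\Pf^2\ge0$, including the case where the constraint $S\le 2$ is the active one. A Lagrange-multiplier analysis on the boundary $1-S+\Pf^2=0$, with $S\le 2$ checked separately, is what I would try first.

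\emph{Equality case.} Take $w_{12}=3$, $w_{14}=w_{23}=1/\sqrt2$, $w_{34}=1$, so $d_{12}=1/3$, $d_{14}=d_{23}=\sqrt2$, $d_{34}=1$. I would exhibit explicit $y,z$ with $x=\sqrt3$ satisfying $1-S+\Pf^2=0$ and $S\le2$. For the upper bound $d^{D'}(1,2)\le 3/2$ I would use the path formula, and for $d^{D'}(1,2)\ge 3/2$ an explicit admissible matrix. Together with the general inequality this gives $\dD(1,2)\le\sqrt3$, so the ratio is exactly $\sqrt3/(3/2)=2/\sqrt3$.
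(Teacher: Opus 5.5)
Your outline has the same architecture as the paper's proof. Since $d^{D'}(1,2)$ does not involve $w_{12}$, you first maximize $\dD(1,2)$ over $w_{12}$, then compare with the path value \eqref{eq:4path}, normalize $w_{34}=1$, and locate the extremum on $w_{14}=w_{23}$. However, the step that carries all the content is left undone. You never determine what eliminating $d_{12}$ produces, so the two-parameter inequality you plan to attack with Lagrange multipliers is never written down.

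The missing fact is Proposition \ref{prop:dD12bound}: with $\sigma=w_{14}+w_{23}$, one has $\sup_{w_{12}}\dD(1,2)=\sqrt{\sigma^2+w_{34}^2}$, attained at $w_{12}=(\sigma^2+w_{34}^2)/w_{34}$. The paper reads this off the corrected cycle formula, Proposition \ref{prop:ikm}. Note that Proposition \ref{prop:ikm} is the formula for the cycle; the path value $d^{D'}(1,2)$ comes from Proposition \ref{prop:chain}, not from Proposition \ref{prop:ikm} as you wrote.

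Your $u$-elimination can also produce this bound, more easily if you replace the pair $S\le 2$, $1-S+\Pf^2\ge 0$ by the norm formula from the proof of Lemma \ref{lem:skewdeletionbound}. When $X_{13}=X_{24}=0$,
\[ \Norm{X}=\tfrac12\bigl(|(u+X_{34},\,X_{14}+X_{23})|+|(u-X_{34},\,X_{14}-X_{23})|\bigr). \]
Minimizing over $u$ (a reflection argument) turns $\Norm{X}\le 1$ into $X_{34}^2+\max(X_{14}^2,X_{23}^2)\le 1$. Then $x=w_{14}X_{14}-w_{23}X_{23}-w_{34}X_{34}\le\sqrt{\sigma^2+w_{34}^2}$ by Cauchy--Schwarz.

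With that bound, no calculus is needed. Write $\lambda=d^{D'}(1,2)$, set $w_{34}=1$, and let $\rho=w_{14}w_{23}$. If $\rho\ge 1$, then $\lambda=\sigma\ge 2$ and the squared ratio is at most $5/4$. If $\rho<1$, then $\lambda^2=(w_{14}^2+1)(w_{23}^2+1)$ and
\[ 4(w_{14}^2+1)(w_{23}^2+1)-3(\sigma^2+1)=(2\rho-1)^2+(w_{14}-w_{23})^2\ge 0, \]
with equality exactly at $w_{14}=w_{23}=1/\sqrt2$.

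Separately, your Pfaffian has the wrong sign. Here $X_{14}=d_{14}(a_1-a_4)=d_{14}(x+y+z)$ and $\Pf(X)=X_{12}X_{34}+X_{14}X_{23}$ (cf.\ Lemma \ref{lem:Pf4}), so
\[ \Pf(X)=d_{12}d_{34}\,xz+d_{14}d_{23}\,y(x+y+z). \]
This sign is not cosmetic. The cross term $2m_{12}m_{23}m_{34}m_{41}$ in \eqref{eq:Xir} is what lets deletion decrease the distance (compare Proposition \ref{prop:signcrit}), and flipping it changes the feasible set. Concretely, take the extremal example $A=\diag(\sqrt3,0,1/\sqrt3,2/\sqrt3)$, i.e.\ $x=\sqrt3$, $y=z=-1/\sqrt3$. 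It has $S=2$ and $\Pf=-1$, so $1-S+\Pf^2=0$. Your formula gives $\Pf=1/3$ and $1-S+\Pf^2=-8/9<0$, so your planned verification of $\dD(1,2)\ge\sqrt3$ would fail.

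Finally, the ``first attempt'' you set aside does work: keep the optimal $A$ and rescale it by $\sqrt3/2$. This is exactly the argument of Theorem \ref{thm:supbound}. By Lemma \ref{lem:skewdeletionbound}, zeroing one symmetric pair of entries of a real skew-symmetric matrix increases its norm by at most the factor $2/\sqrt3$. That yields the inequality for every graph, independently of the cycle formula.
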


Moreover, we are able to prove that $2/\sqrt{3}$ is a tight upper bound for $\sup \dD/d^{D'}$ over \textit{all} graphs.

\begin{theorem}
 \label{thm:supbound}
 Let $D$ be a Dirac operator on a finite graph, let $e$ be any edge, and let $D'$ be obtained from $D$ by deleting $e$.  Then, for all vertices $x$ and $y$,
 \[ \dD(x,y) \le \frac{2}{\sqrt{3}}d^{D'}(x,y).\]
 Moreover, the constant $2/\sqrt{3}$ is sharp and is obtained by the weighted $4$-cycle of Theorem \ref{thm:sharp}, where $x$ and $y$ are the endpoints of the deleted edge.
\end{theorem}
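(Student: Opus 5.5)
The plan is to compare admissible matrices for $D$ and $D'$ directly. Let $e=\{p,q\}$ and take $A$ real diagonal with $\Norm{[A,D]}\le 1$ and $|a_x-a_y|$ close to $\dD(x,y)$. The commutator $B=[A,D]$ is real antisymmetric with entries $B_{ij}=(a_i-a_j)d_{ij}$. Also $B'=[A,D']$ equals $B$ with the $(p,q)$ and $(q,p)$ entries set to zero, i.e.
\[ B' = B - PBP, \]
where $P$ is the orthogonal projection onto $\operatorname{span}\{e_p,e_q\}$. It therefore suffices to prove the following matrix inequality: \emph{for every real antisymmetric $B$ with $\Norm{B}\le 1$, one has $\Norm{B-PBP}\le 2/\sqrt{3}$.} Granting this, $\tfrac{\sqrt3}{2}A$ is admissible for $D'$, so $d^{D'}(x,y)\ge \tfrac{\sqrt3}{2}|a_x-a_y|$. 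Taking the supremum over $A$ gives the inequality in the theorem. Sharpness then follows immediately from Theorem \ref{thm:sharp}, which exhibits the ratio $\sqrt3/(3/2)=2/\sqrt3$.

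For the matrix inequality I would pass to the Hermitian matrix $H=\ii B$ and write it in block form relative to $P$:
\[ H=\begin{pmatrix} X & Y\\ Y^* & Z\end{pmatrix}, \qquad H-PHP=\begin{pmatrix} 0 & Y\\ Y^* & Z\end{pmatrix}. \]
Here $X=\ii b\,(E_{pq}-E_{qp})$ is $2\times 2$ with eigenvalues $\pm b$, where $|b|\le 1$. A general pinching bound only gives the constant $2$, so the antisymmetric structure must be used. It enters in two ways. First, the spectrum of $X$ is symmetric. Second, complex conjugation $v\mapsto\bar v$ satisfies $\langle H\bar v,\bar v\rangle=-\langle Hv,v\rangle$, so the spectrum of $H-PHP$ is also symmetric. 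Consequently it suffices to bound the largest eigenvalue of $H-PHP$ from above.

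Fix a unit vector $v=(u,w)$ achieving this largest eigenvalue. Set $\alpha=\langle Xu,u\rangle$, $\beta=2\,\mathrm{Re}\langle Yw,u\rangle$, $\gamma=\langle Zw,w\rangle$, so the target quantity is $\beta+\gamma$. Testing $\Norm{H}\le1$ on the vectors $(tu,w)$ for real $t$, and on the conjugate-type vectors $(t\bar u,w)$, which flip the sign of $\alpha$ while controlling $\beta$, gives a family of quadratic inequalities in $t$:
\[ t^2\alpha \pm t\beta' + \gamma \le t^2|u|^2+|w|^2 . \]
Combining these, using $|\alpha|\le|b|\,|u|^2$ together with the symmetric-spectrum information, and optimizing over $t$ and the split $|u|^2+|w|^2=1$, should reduce everything to a small scalar extremal problem. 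I expect the optimum of that problem to be $2/\sqrt3$, consistent with the extremal $4$-cycle. An alternative route to the same reduction is to note that, by symmetry, the extremal case should be attained when $H$ restricted to $\operatorname{span}\{e_p,e_q,w\}$ is essentially $3$- or $4$-dimensional; one could then compute directly there.

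The main obstacle is exactly this scalar or low-dimensional optimization. The antisymmetry has to be exploited sharply enough to improve the naive constant $2$ all the way down to $2/\sqrt3$. Since the extremal example from Theorem \ref{thm:sharp} lives on four vertices, my first attempt will be to reduce to a $4\times4$ real antisymmetric $B$ and compute its norm explicitly via Pfaffian-type formulas, where the eigenvalues are $\pm\ii\lambda_1,\pm\ii\lambda_2$. In that setting $\lambda_1^2+\lambda_2^2$ equals the sum of squares of the entries and $\lambda_1\lambda_2=|\Pf B|$, and I will maximize $\Norm{B'}$ subject to $\Norm{B}\le1$.
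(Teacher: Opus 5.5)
Your outer reduction is the same as the paper's. The matrix $[A,D']$ is $[A,D]$ with the two entries on $e$ set to zero, so everything rests on the inequality $\Norm{B-PBP}\le\tfrac{2}{\sqrt3}\Norm{B}$ (the paper's Lemma \ref{lem:skewdeletionbound}). After that, rescaling $A$ by $\sqrt3/2$ and invoking Theorem \ref{thm:sharp} finishes.

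\textbf{The gap.} That matrix inequality is the entire content of the theorem, and you do not prove it. You only predict that a scalar extremal problem will have optimum $2/\sqrt3$, and as written it will not. Your tests are one-sided, $t^2\alpha+t\beta+\gamma\le t^2|u|^2+|w|^2$. Together with $|\alpha|\le|u|^2$ and $|\gamma|\le|w|^2$, they allow $\alpha=-|u|^2$, which leaves only $\beta^2\le 8|u|^2(|w|^2-\gamma)$. Maximizing $\beta+\gamma$ under these constraints gives $(\sqrt{17}-1)/2\approx 1.56$, not $2/\sqrt3$.

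The conjugate tests do not repair this. They involve $2\,\mathrm{Re}\langle Yw,\bar u\rangle$ rather than $\beta$. Since $Yw=\lambda u$ for your eigenvector, this equals $2\lambda\,\mathrm{Re}\sum_i u_i^2$, which vanishes for $u\propto(1,\ii)$. So they give no control of $\beta$.

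Your $4\times4$ fallback is also incomplete: you give no reason why $4\times4$ suffices, and you do not carry out the optimization. The paper supplies both steps. It compresses to the real span of $e_s,e_t$ and real unit vectors $y,z$ with $y^tM'z=\Norm{M'}$; deletion commutes with this compression because $e_s,e_t$ are basis vectors. It then writes $\Norm{N}=|\eta|+|\zeta|$ and observes that deleting $n_{12}$ shifts the first coordinates of both $\eta$ and $\zeta$ by $n_{12}/2$. The triangle inequality and Lemma \ref{lem:ineq} finish.

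\textbf{What is missing in your own route.} The missing ingredient is not more antisymmetry but the other half of $\Norm{H}\le 1$. Your premise that antisymmetry must be exploited is in fact false. Add the lower bounds $t^2\alpha+t\beta+\gamma\ge-(t^2|u|^2+|w|^2)$. This yields both $\beta^2\le4(|u|^2-\alpha)(|w|^2-\gamma)$ and $\beta^2\le4(|u|^2+\alpha)(|w|^2+\gamma)$. Averaging these with weights $\tfrac{|w|^2+\gamma}{2|w|^2}$ and $\tfrac{|w|^2-\gamma}{2|w|^2}$ eliminates $\alpha$ and gives $\beta^2\le 4|u|^2(|w|^4-\gamma^2)/|w|^2$.

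Now set $\Gamma=\gamma/|w|^2$ and use $|u|^2+|w|^2=1$. Then
$\lambda=\beta+\gamma\le 2|u||w|\sqrt{1-\Gamma^2}+|w|^2\Gamma\le\tfrac12\bigl(\Gamma+\sqrt{4-3\Gamma^2}\bigr)\le 2/\sqrt3$.
The middle step bounds the quadratic form of $\bigl(\begin{smallmatrix}0&\sqrt{1-\Gamma^2}\\ \sqrt{1-\Gamma^2}&\Gamma\end{smallmatrix}\bigr)$ at the unit vector $(|u|,|w|)$ by its top eigenvalue.

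This proves $\Norm{H-PHP}\le\tfrac{2}{\sqrt3}\Norm{H}$ for every Hermitian $H$ and every orthogonal projection $P$. That is a shorter and more general argument than the paper's $4\times4$ computation, but it is not in your proposal.
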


We next study the effect of edge deletion on $n$-cycles for values of $n$ other than $4$.  We are able to relate admissibility of a matrix $A$ to a calculation involving the \textit{Pfaffians} of even-dimensional submatrices of the matrix $M = [A,D]$ (see Lemma \ref{lem:normMandXi}).  Since the Pfaffian is defined in terms of perfect matchings, this method is particularly effective for cycles, and we are able to completely characterize which $n$-cycles obey \eqref{eq:question}, the proof of which follows immediately from Theorems \ref{thm:odd}, \ref{thm:2mod4}, and \ref{thm:0mod4}.

\begin{theorem}
 \label{thm:cycles}
 Let $n \ge 3$ and let $D$ be a Dirac operator on the $n$-cycle.  For an edge $e$, define $D'$ to be the Dirac operator obtained by deleting the edge $e$.  If $4 \nmid n$, then
 \[ d^{D'}(x,y) \ge \dD(x,y)\]
 for all edges $e$ and for all choices $x,y$ of vertices.  On the other hand, if $4 \mid n$, then there exists a Dirac operator $D$, an edge $e$, and vertices $x, y$ such that
 \[ d^{D'}(x,y) < \dD(x,y).\]
\end{theorem}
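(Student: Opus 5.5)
Theorem \ref{thm:cycles} splits into a monotonicity statement for $4 \nmid n$ and a construction for $4 \mid n$; I treat them separately, following the three-way split into Theorems \ref{thm:odd}, \ref{thm:2mod4}, and \ref{thm:0mod4}.

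\textbf{Reduction for $4 \nmid n$.} After deleting $e$ from the $n$-cycle, $D'$ lives on a path, and on a path (a tree) the noncommutative distance equals the geodesic distance: $d^{D'}(x,y)=\ell^{D'}(x,y)$. This is classical (see \cite{IKM01}); for a tree, $M=[A,D]$ is a weighted adjacency-type matrix of a bipartite acyclic graph, so its norm is controlled edge by edge. Hence it suffices to show
\[ \dD(x,y)\le \ell^{D'}(x,y), \]
where $\ell^{D'}(x,y)$ is the length of the unique path from $x$ to $y$ avoiding $e$. Concretely, given an admissible $A$ for $D$, I must show that $|a_x-a_y|$ is at most the sum of the weights $w_{ij}$ along that path.

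\textbf{Key step for $4\nmid n$.} I use the Pfaffian criterion of Lemma \ref{lem:normMandXi}. The matrix $M=[A,D]$ is skew-symmetric, with entries $m_{ij}=(a_i-a_j)d_{ij}$ supported on cycle edges, and admissibility says $I-M^*M\succeq 0$. Equivalently, for the real skew matrix $M$, every even principal minor of $\begin{pmatrix} I & M\\ M^* & I\end{pmatrix}$, or of $I+tM$-type matrices, is nonnegative; the lemma presumably phrases this as $\Xi_S\ge 0$, where $\Xi_S$ is a sum over subsets $S$ of squared Pfaffians with signs. On a cycle, the perfect matchings of an induced subgraph are those of disjoint paths, together with (when $S=V$ and $n$ is even) the two alternating matchings of the whole cycle.

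\begin{itemize}
\item When $n$ is odd, the full cycle has no perfect matching. The Pfaffian structure is then exactly that of the path obtained by deleting \emph{any} one edge, up to a term that only helps admissibility. So every $A$ admissible for $D$ satisfies the defining inequalities that force $|a_x-a_y|\le \ell^{D'}(x,y)$; I expect this to mean that each $|m_{ij}|\le 1$ and that the path bound follows by telescoping.
\item When $n\equiv 2 \pmod 4$, the two alternating matchings contribute products $\prod m_{i,i+1}$ over alternate edges. Their relative sign in $\Pf(M)$ is $(-1)^{n/2-1}$, which for $n/2$ odd makes the cross term appear with the favourable sign. In other words, $\det(I+M)$-type expressions for the cycle dominate those for the path, so path-admissibility constraints are again implied.
\end{itemize}

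\textbf{Construction for $4\mid n$.} Here the relative sign flips, which is the mechanism behind Theorem \ref{thm:main}. I would generalize that example: take $n=4k$ and use the $4$-cycle weights of Theorem \ref{thm:sharp}. Subdividing an edge of weight $w$ into several edges whose weights sum to $w$ preserves the phenomenon only if the Pfaffian sign pattern persists, which needs the length $n$ to stay divisible by $4$. So I would set all new edges to have small $d$ (large weight) except a designated alternating set. I would exhibit an explicit $A$, linear along each arc, and verify $\Norm{M}\le1$, using that $M^*M$ is a block-tridiagonal circulant-like matrix whose norm can be bounded via Lemma \ref{lem:normMandXi}. Then I would check $|a_x-a_y|>\ell^{D'}(x,y)$.

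\textbf{Main obstacle.} I expect the main obstacle to be the $n\equiv 2\pmod 4$ case: showing rigorously that the sign of the alternating-matching cross term always makes the cycle constraints at least as strong as the path constraints, for all subsets simultaneously.
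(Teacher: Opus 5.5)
Your reduction for $4\nmid n$ rests on a false claim: the noncommutative distance on a path is \emph{not} the geodesic distance. Already for the path $1-2-3$ one has $\Norm{[A,D]}^2=m_{12}^2+m_{23}^2$, so $\dD(1,3)=\sqrt{w_{12}^2+w_{23}^2}<w_{12}+w_{23}$. Likewise, Proposition~\ref{prop:chain} gives $\dD(1,4)=2<3$ for the unit-weight path on four vertices. Proposition~\ref{prop:geod} gives equality only when $\{x,y\}$ is itself a bridge edge.

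Consequently, the inequality you set out to prove, $\dD(x,y)\le\ell^{D'}(x,y)$, holds trivially on every graph, since $\dD\le\ell^D\le\ell^{D'}$, and it says nothing about $d^{D'}$. If your reduction were valid it would prove \eqref{eq:question} for all $n$. That contradicts Theorem~\ref{thm:main}, where $\ell^{D'}(1,2)=3$ but $d^{D'}(1,2)=2<\sqrt5$. For the same reason, the goal $|a_x-a_y|>\ell^{D'}(x,y)$ in your $4\mid n$ construction can never be met.

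What actually has to be shown is that an admissible (or optimal) $A$ for $D$ either is admissible for $D'$ or can be traded for a $D'$-admissible matrix with at least the same $|a_x-a_y|$. For odd $n$, your remark about ``a term that only helps'' is the right idea once made precise as $\Xi_k(M')=\Xi_k(M)+m_{12}^2\Xi_{k-1}(M_S)$ with $S=\{3,\dots,n\}$. Together with $\Norm{M_S}\le\Norm{M}$ and Lemma~\ref{lem:normMandXi}, this shows that $A$ itself is admissible for $D'$. You should conclude there, rather than aiming at $|m_{ij}|\le 1$ and telescoping.

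For $n\equiv 2\pmod 4$, the obstacle you flag cannot be overcome as you pose it. By Lemma~\ref{lem:Pf4}, the two alternating matchings always enter $\Pf(M)$ with relative sign $-1$. The parity of $r=n/2$ enters only through the coefficient $(-1)^r$ of $\ep_r$ in $\Xi_r$, which gives $\Xi_r(M')-\Xi_r(M)=m_{12}^2\Xi_{r-1}(M_S)+(-1)^r2m_{12}m_{23}\cdots m_{n,1}$. The sign of the last term depends on the signs of the $m_{i,i+1}$, i.e.\ on $A$. So it is false that every $A$ admissible for $D$ remains admissible for $D'$. For example, with $n=6$, flipping the sign of $m_{23}$ in the matrix of Proposition~\ref{prop:witness} gives $\Norm{M}<1<\Norm{M'}$.

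The missing idea is to work only with an optimizer, i.e.\ $a_1-a_2=\dD(1,2)$, so that $m_{12}\ge 0$. If all $m_{j,j+1}$ with $j\ge 2$ are negative, the cross term has the harmless sign exactly when $r$ is odd (Proposition~\ref{prop:signcrit}). Otherwise one abandons $A$ and builds a new $B$ admissible for $D'$ from the negative parts $\mu_i$ (Proposition~\ref{prop:reduction}). Its norm is bounded by the norms of compressions of $M$ to runs of consecutive edges (Lemma~\ref{lem:compression}). Pairs $\{x,y\}\neq\{1,2\}$ then follow from pruning and Proposition~\ref{prop:deletion}.

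For $4\mid n$, subdividing the $4$-cycle example is unjustified, because noncommutative distance on a path is not additive under subdivision. It also leaves you no way to evaluate $d^{D'}$ on the resulting path. The paper instead uses an exactly solvable path with weights $\varpi_i=\sin((i-1)\theta)\sin(i\theta)$, for which $d^{D'}(1,2)=(n+1)/4$ (Lemma~\ref{lem:sine}, Remark~\ref{rem:simplification}). Its optimal commutator is $L(c^*\mathbf{1})$, which has a simple top singular value. The paper then scales the path entries by $1+\delta$ and adds a tiny $m_{12}=t(\delta)>0$. For even $r$, the resulting linear gain $2t\gamma^{n-1}$ in $\Xi_r(M)$ beats the quadratic loss $t^2\Xi_{r-1}(M_S)$, so $\Norm{M}<1$ (Proposition~\ref{prop:witness}). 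Finally, a large $w_{12}$ closes the cycle via Lemma~\ref{lem:weighting}.
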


In the examples above when edge deletion decreases the noncommutative distance between vertices $x$ and $y$, the deleted edge $e$ is always $\{x,y\}$.  This leads to the natural question as to whether the distance of the deleted edge from the vertices $x$ and $y$ plays a role; along these lines, Besnard speculates that a vertex $z$ that is geodesically very far from $x$ and $y$ should not matter much, if at all, for the computation of $\dD(x,y)$ \cite[Section 5]{Bes21}.  However, as shown in Section \ref{sect:far}, this is not the case; the following result is an immediate consequence of Corollary \ref{cor:far}, Theorem \ref{thm:nonlocal}, and Theorem \ref{thm:farweight}.

\begin{theorem}
 \label{thm:farsummary}
 In each of the following, $G$ is a graph with Dirac operator $D$, $x,y,z$ are vertices, $e$ is an edge, $D'$ denotes the Dirac operator of $G - e$, and $D''$ denotes the Dirac operator of $G - z$.
 \begin{itemize}
  \item[(i)] For every positive integer $N$, there exists a choice of $G$, $D$, $x$, $y$, and $e$ such that the graph-theoretic distance from $e$ to each of $x$ and $y$ is at least $N$ and $d^{D'}(x,y) < \dD(x,y)$.
  \item[(ii)] For every positive integer $N > 1$ and $\eta > 0$, there exists a choice of $G$, $D$, $x$, $y$, and $z$ such that the distance from $z$ to each of $x$ and $y$ is $N$, $\dD(x,y) < \eta$, and $d^{D''}(x,y) = 1$.
  \item[(iii)] For every $\rho > 0$, there exists a choice of $G$, $D$, $x$, $y$, and $e$ such that the weighted distance from $e$ to each of $x$ and $y$ is $\rho$ and
  \[ d^{D'}(x,y) \le \frac{11}{5} < \sqrt{5} \le \dD(x,y).\]
 \end{itemize}
\end{theorem}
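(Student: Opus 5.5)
Theorem \ref{thm:farsummary} is a summary statement: as noted just before it, it follows immediately from Corollary \ref{cor:far}, Theorem \ref{thm:nonlocal}, and Theorem \ref{thm:farweight}. The plan is to build each of the three constructions in Section \ref{sect:far} from the $4$-cycle phenomenon of Theorem \ref{thm:main}. The basic tool is a gluing principle. Suppose a gadget $H$ attaches to the rest of the graph only through a cut vertex, or through a pair of vertices carrying a very heavy (tiny-$d$) connection. Then the contribution of $H$ to $\|[A,D]\|$ can be controlled separately, so the local behaviour of $d^D$ inside $H$ is transported to the vertices $x,y$.

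For (i), I would start with the example of Theorem \ref{thm:main} with $\ep=1/5$, where deleting $\{1,2\}$ lowers $d^D(1,2)$ from at least $\sqrt5$ to $2$. I would then attach to vertices $1$ and $2$ long paths of length $N$ whose far endpoints are $x$ and $y$. These paths should carry extremely small weights $w$, that is, huge $d$. The aim is to show that $d^D(x,y)$ equals $d^D(1,2)$ up to an error that tends to $0$ as the path weights tend to $0$, and likewise for $D'$. The lower bound comes from extending an admissible $A$ on the cycle constantly along each path; this changes $[A,D]$ only by blocks of vanishing size. The upper bound uses $d^D\le \ell^D$ together with the triangle inequality for $d^D$, since $\ell^D(x,1)$ and $\ell^D(2,y)$ are tiny. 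Choosing the weights small enough preserves the strict inequality, and the deleted edge $\{1,2\}$ is at graph distance $N$ from both $x$ and $y$.

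For (ii), I would take $x,y$ joined by a structure where $d^{D''}(x,y)=1$, for instance a single edge of weight $1$ once $z$ is removed. I would then add a vertex $z$ and connecting paths of length $N$ that make the spectral constraint of $[A,D]$ force $a_x\approx a_y$. This would exploit the fact that the norm of $[A,D]$ on a star-like or complete-like configuration through $z$ penalizes $|a_x-a_y|$ by a factor growing with the $d$-values, pushing $d^D(x,y)$ below $\eta$. I expect this to be the \emph{main obstacle}. Vertex deletion weakly increases $d^D$ by \cite[Proposition 4(1)]{IKM01}, so the direction is right, but obtaining an arbitrarily large factor requires an explicit upper bound on $d^D(x,y)$ via a norm estimate. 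The first thing I would try is a test-vector lower bound on $\|[A,D]\|$: choose $v$ supported on $x$, $y$ and the intermediate vertices so that $\langle v,[A,D]^*[A,D]v\rangle\ge c\,|a_x-a_y|^2$, with $c\to\infty$ as the $d$-entries on a suitable cycle through $z$ grow. Because the Pfaffian machinery of Lemma \ref{lem:normMandXi} is effective on cycles, a long even cycle through $x,y,z$ is the natural candidate.

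For (iii), I would modify (i) so that the connections from $x,y$ to the gadget have total weighted length exactly $\rho$, rather than negligible length. The idea is to rescale: take the $4$-cycle gadget with all its weights multiplied by a large factor $\lambda$, so that its distance discrepancy becomes proportional to $\lambda$, while the connecting paths have fixed weighted length $\rho$. Normalizing, the triangle-inequality upper bound gives $d^{D'}(x,y)\le 2+O(\rho/\lambda)$, which can be made at most $11/5$. The lower bound $d^D(x,y)\ge\sqrt5$ would come from extending the gadget's admissible $A$ appropriately along the paths.
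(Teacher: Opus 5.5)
Your part (i) is exactly the paper's argument (Proposition~\ref{prop:pendant} and Corollary~\ref{cor:far}). Parts (ii) and (iii) have a genuine gap. In both, $z$ or $e$ must be far in the \emph{weighted} metric while the conclusions are absolute numbers, and the bounded-degree constructions you propose cannot achieve this.

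Take (iii) first. Its requirements are not scale-invariant. If you multiply the gadget's weights by $\lambda$ and then normalize (as your bound $2+O(\rho/\lambda)$ indicates), the attaching paths end up with weighted length $\rho/\lambda$, not $\rho$. So $e$ is again close to $x$ and $y$, which is exactly the regime of Corollary~\ref{cor:far}. If you do not normalize, the two distances are about $2\lambda$ and $\sqrt5\lambda$, not $\le 11/5$ and $\ge\sqrt5$. In fact no pendant-path construction can work. Let $P_x$ be the path from $1$ to $x$, of weighted length $\rho$. Set $a=0$ off $P_x$, and let $a$ increase by $w/2$ across each edge of $P_x$ of weight $w$, moving from $1$ to $x$. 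Then $[A,D']$ is supported on $P_x$ with entries $\pm 1/2$, so its norm is at most $1$. Hence $d^{D'}(x,y)\ge a_x-a_y=\rho/2$, which exceeds $11/5$ once $\rho>22/5$.

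The same obstruction defeats your plan for (ii). In Theorem~\ref{thm:nonlocal}, $z$ is at graph \emph{and} weighted distance $N$ from $x$ and $y$. That is the point, since the locality speculation being refuted is geodesic. Growing the $d$-entries on a cycle through $z$ makes $z$ geodesically close to $x$ and $y$. Suppose instead that $z$ is at weighted distance $N\ge 2$. Every $x$--$y$ path either avoids $z$ or has weight at least $2N$, and $\ell^{D''}(x,y)\ge d^{D''}(x,y)=1$, so $\ell^D(x,y)\ge 1$. Besnard's bound $d^D\ge \ell^D/\Delta(G)$, quoted in the introduction and in the remark after Theorem~\ref{thm:nonlocal}, then forces $\Delta(G)>1/\eta$. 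On any cycle it gives $d^D(x,y)\ge 1/2$. (If only graph distance were required, tiny weights together with Proposition~\ref{prop:geod} would settle (ii) at once, with no test-vector or Pfaffian estimate.)

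The missing idea is a way to let the degree grow while still computing $d^D$: the blow-up Lemma~\ref{lem:blowup}.
\begin{itemize}
\item Replace the intermediate vertices by independent sets of size $k$, completely joined by edges of a fixed weight ($1$ in Theorem~\ref{thm:nonlocal}, $W=\rho/2$ in Theorem~\ref{thm:farweight}).
\item Averaging a maximizer over $\prod_i\Sym(L_i)$ shows that $d^D(x,y)$ equals the distance on the quotient graph whose Dirac entries are multiplied by $\sqrt{k_ik_j}$.
\item Geodesic distances in $G$ therefore stay $N$ (resp.\ $\rho$), while the quotient weights become $1/\sqrt{k}$ and $1/k$ (resp.\ $W/\sqrt{k}\le 1/20$ for $k\ge 400W^2$).
\item Then $d^D\le\ell$ on the quotient cycle gives (ii).
\item Proposition~\ref{prop:pendant} applied in the quotient gives (iii). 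This is legitimate because $e$ joins two singleton classes, so deleting it commutes with taking the quotient.
\end{itemize}
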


We also note that Theorem \ref{thm:farsummary} (i), (iii) can be combined so that the graph-theoretic distance and the weighted distance are both arbitrarily large while $d^{D'}(x,y) < \dD(x,y)$; see Remark \ref{rem:combined}.

This paper is organized as follows.  Section \ref{sect:prelim} is dedicated to the preliminary results about noncommutative distances and skew-symmetric matrices that are needed later in the paper.  Section \ref{sect:counterex} is dedicated to the proof of Theorem \ref{thm:main}, which provides a concrete example on a $4$-cycle showing that the deletion of an edge can decrease the noncommutative distance between vertices.  In Section \ref{sect:4cycle}, we analyze how edge deletion affects noncommutative distances on the $4$-cycle and prove Theorem \ref{thm:sharp}, and in Section \ref{sect:sharp}, we prove Theorem \ref{thm:supbound}, showing that the bound from Theorem \ref{thm:sharp} is in fact tight for all graphs.  In Section \ref{sect:cycles}, we analyze how edge deletion affects noncommutative distances in an $n$-cycle and prove Theorem \ref{thm:cycles}.  Finally, in Section \ref{sect:far} we prove Theorem \ref{thm:farsummary}, providing constructions showing that deletion of an edge or vertex can materially affect the noncommutative distance between two far away vertices.


\section{Preliminaries}
\label{sect:prelim}

In this section we collect a few results that are used later in the paper, starting with useful results regarding the noncommutative distance $\dD$.  First, it is known that $d^D$ is bounded by the weighted (geodesic) distance $\ell^D$.

\begin{proposition}[{\cite[Theorem 1, Corollary 1]{Bes21}}, {\cite[Proposition 4]{IKM01}}]
\label{prop:geod}
For all vertices $x,y$, $\dD(x,y) \le \ell^D(x,y)$, and $\dD(x,y) = \ell^D(x,y)$ when $\{x,y\}$ is a bridge.
\end{proposition}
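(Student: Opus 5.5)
We prove the two assertions separately. For the inequality $\dD(x,y)\le\ell^D(x,y)$, take an admissible real diagonal $A=\diag(a_1,\dots,a_n)$ and first bound the jump of $A$ across a single edge. The commutator has entries $[A,D]_{ij}=(a_i-a_j)d_{ij}$. Testing $M=[A,D]$ against the unit vector $e_j$ gives $|(a_i-a_j)d_{ij}|=|\langle e_i,Me_j\rangle|\le\Norm{M}\le 1$. Hence for every edge $\{i,j\}$,
\[ |a_i-a_j|\le d_{ij}^{-1}=w_{ij}. \]
This also covers edges with $d_{ij}=0$, where $w_{ij}=+\infty$. Now let $x=v_0,v_1,\dots,v_k=y$ be any path. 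The triangle inequality gives $|a_x-a_y|\le\sum_t|a_{v_{t-1}}-a_{v_t}|\le\sum_t w(v_{t-1},v_t)$. Taking the infimum over paths and then the supremum over admissible $A$ yields $\dD(x,y)\le\ell^D(x,y)$.

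For the bridge case, suppose $\{x,y\}$ is a bridge. We already have $\dD(x,y)\le\ell^D(x,y)\le w_{xy}$. Removing the bridge splits $V$ into two components $V_x\ni x$ and $V_y\ni y$, so $\ell^D(x,y)=w_{xy}$ because every path from $x$ to $y$ must use the bridge. It therefore suffices to exhibit one admissible $A$ with $|a_x-a_y|=w_{xy}$.

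Take $a_i=w_{xy}$ for $i\in V_x$ and $a_i=0$ for $i\in V_y$, i.e. $A=w_{xy}P$ with $P$ the projection onto $V_x$. Every edge other than $\{x,y\}$ lies inside a single component, so the corresponding entries of $[A,D]$ vanish. The only nonzero entries are therefore
\[ [A,D]_{xy}=w_{xy}d_{xy}=1,\qquad [A,D]_{yx}=-1. \]
This makes $[A,D]$ a real skew-symmetric matrix supported on one $2\times2$ block $\begin{pmatrix}0&1\\-1&0\end{pmatrix}$, whose operator norm is $1$. So $A$ is admissible and $|a_x-a_y|=w_{xy}$, giving $\dD(x,y)\ge w_{xy}=\ell^D(x,y)$.

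There is no real obstacle. The only point to watch is that the constant-on-components function kills every other commutator entry, and this is exactly where the bridge hypothesis is used: any second edge between $V_x$ and $V_y$ would contribute a further nonzero entry and could raise the norm above $1$.
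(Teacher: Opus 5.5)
Your proof is correct. The paper gives no proof of its own here; it simply cites \cite[Theorem 1, Corollary 1]{Bes21} and \cite[Proposition 4]{IKM01}. Your first half is the standard argument for this inequality: each entry of $[A,D]$ is bounded by $\Norm{[A,D]}$, so $|a_i-a_j|\le w_{ij}$ on every edge, and the triangle inequality along a path finishes it.

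For the bridge case you build the witness directly, as a multiple of the indicator of the side of the cut containing $x$. The route more in keeping with the paper's toolkit is pruning. Every simple $x$--$y$ path must use the bridge, and a simple path from $x$ to $y$ containing the edge $\{x,y\}$ must be that edge alone. So $\PP(x,y)$ is the single edge, and Proposition~\ref{prop:prune} gives $\dD(x,y)=w_{xy}$ at once.

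Your construction is more elementary, since it avoids the pruning theorem, which is itself a nontrivial result. It also yields slightly more: for any bridge $\{u,v\}$ separating $x$ from $y$, the same cut function gives $\dD(x,y)\ge w_{uv}$.

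Two minor points of bookkeeping:
\begin{itemize}
\item If $G$ is disconnected, removing the bridge need not leave exactly two components. Instead, take $V_x$ to be the component of $G-e$ containing $x$ and $V_y$ its complement; the argument is otherwise unchanged.
\item If $d_{xy}=0$, so that $w_{xy}=+\infty$, setting $a_i=w_{xy}$ is meaningless. Use $a=t$ on $V_x$ for arbitrary $t>0$; then $[A,D]=0$, so $\dD(x,y)=+\infty=\ell^D(x,y)$.
\end{itemize}
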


Furthermore, as previously mentioned, it is known that deletion of a vertex only increases the noncommutative distance.

\begin{proposition}[{\cite[Proposition 4(1)]{IKM01}}, {\cite[Lemma 2, Equation (9)]{Bes21}}]
\label{prop:deletion}
If $D'$ is obtained from $D$ by setting to $0$ the $i^{\text{th}}$ row and column (which corresponds to the deletion of the vertex $i$), then $d^{D'} \ge \dD$.  More generally, if $V'$ is a subset of vertices of $V$ and $d^{D}_{V'}$ is the associated noncommutative distance on the induced subgraph, then $\dD_{V'} \ge \dD$.
\end{proposition}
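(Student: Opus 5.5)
The plan is to compare admissible matrices directly: restricting an admissible matrix for $D$ to the vertices that remain should give an admissible matrix for the smaller operator. The key fact I will use is that the operator norm of a principal submatrix is at most the norm of the whole matrix. I would prove the general statement about induced subgraphs first and then deduce the vertex-deletion statement from it.

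Let $V' \subseteq V$, let $D_{V'}$ be the principal submatrix of $D$ indexed by $V'$, and fix $x,y \in V'$. Take any real diagonal $A=\diag(a_1,\dots,a_n)$ that is admissible for $D$, and let $A_{V'}$ be its principal submatrix on $V'$. Since $A$ is diagonal, $[A,D]_{ij}=(a_i-a_j)d_{ij}$. Hence the principal submatrix of $[A,D]$ on $V'$ is exactly $[A_{V'},D_{V'}]$. Writing $P$ for the coordinate projection onto $\C^{V'}$, this submatrix is $P[A,D]P^{*}$, so
\[ \Norm{[A_{V'},D_{V'}]} \le \Norm{[A,D]} \le 1 .\]
Thus $A_{V'}$ is admissible for $D_{V'}$, and it gives the same value $|a_x-a_y|$. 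Taking the supremum over all admissible $A$ yields $d^{D}_{V'}(x,y)\ge \dD(x,y)$.

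For the first statement, let $D'$ be obtained by zeroing the $i^{\text{th}}$ row and column, and set $V'=V\setminus\{i\}$. Then $D'$ is block diagonal, with blocks $D_{V'}$ and $0$. For any diagonal $A$, the commutator $[A,D']$ is block diagonal with blocks $[A_{V'},D_{V'}]$ and $0$. Its norm is therefore $\Norm{[A_{V'},D_{V'}]}$, and $a_i$ is unconstrained. Two cases arise:
\begin{itemize}
\item For $x,y\neq i$ we get $d^{D'}(x,y)=d^{D}_{V'}(x,y)\ge \dD(x,y)$, by the previous paragraph.
\item If one of $x,y$ equals $i$, then $a_i$ can be chosen freely, so $d^{D'}(x,y)=+\infty$ and the inequality is trivial.
\end{itemize}

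There is no real obstacle. The only points needing care are the identification of the principal submatrix of the commutator with the commutator of the principal submatrices (which uses that $A$ is diagonal) and the degenerate case of the isolated vertex.
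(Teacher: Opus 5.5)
Your proof is correct. The paper gives no proof of its own here and only cites the literature, but the mechanism you use is the standard one, and it is also the one the paper itself relies on later (for instance in the proofs of Theorems~\ref{thm:odd} and~\ref{thm:2mod4}): the principal submatrix of $[A,D]$ on $V'$ equals $[A_{V'},D_{V'}]$ because $A$ is diagonal, and compression does not increase the norm (Lemma~\ref{lem:compression}). Your block-diagonal treatment of the zeroed vertex $i$ is also correct, including the degenerate case $d^{D'}(i,y)=+\infty$.
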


For vertices $x$ and $y$ in $V$, we denote  by $\PP(x,y)$ the subgraph of $G$ induced on the union of all simple paths joining $x$ and $y$.  The noncommutative distance between $x$ and $y$ depends only on $\PP(x,y)$.

\begin{proposition}[pruning; {\cite[Proposition 4(2)]{IKM01}}, {\cite[Theorems 3,4]{Bes21}}]
\label{prop:prune}
For $x \neq y$, $\dD(x,y) = d^{D_\PP}(x,y)$, where $D_\PP$ is induced on $\PP(x,y)$.
\end{proposition}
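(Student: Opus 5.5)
The plan is to prove both inclusions of the equality $\dD(x,y) = d^{D_\PP}(x,y)$ by comparing admissible diagonal matrices on $G$ and on $\PP = \PP(x,y)$.

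The inequality $d^{D_\PP}(x,y) \ge \dD(x,y)$ is immediate from Proposition \ref{prop:deletion}, applied with $V'$ equal to the vertex set of $\PP(x,y)$. Indeed, $\PP(x,y)$ is an induced subgraph, and $D_\PP$ is the compression of $D$ to those coordinates. Any admissible $A$ for $D$ restricts to an admissible matrix for $D_\PP$, since $[A_{V'},D_{V'}]$ is a compression of $[A,D]$ and so has norm at most $\Norm{[A,D]}$. The values at $x$ and $y$ are unchanged by the restriction.

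For the reverse inequality, I take an admissible $B = \diag(b_v)_{v \in V'}$ for $D_\PP$ and extend it to an admissible $A$ on all of $G$ with the same values at $x$ and $y$. The first step is structural. A vertex $v \notin V'$ lies in a component $C$ of $G - V'$. I claim $C$ attaches to $V'$ through a single vertex $c$, so that $c$ is a cut vertex separating $C$ from $V'$. If $C$ had edges to two distinct vertices $u,u' \in V'$, one could route a simple $x$--$y$ path through $C$, contradicting the maximality of $\PP$. This is the standard fact that $\PP(x,y)$ is the union of the blocks along the block--cut tree path from $x$ to $y$. I would verify it carefully, including the case $u,u'$ lying on paths that meet awkwardly, by using the block decomposition.

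The extension is then $a_v = b_c$ for every $v \in C$, and $a_v = b_v$ on $V'$. Write $M = [A,D]$, whose entries are $(a_i-a_j)d_{ij}$. Every edge inside $C \cup \{c\}$ gets a zero entry. Edges between different components $C$ and $V'$ do not exist except through attachment vertices. Hence $M$ equals $[B,D_\PP]$ padded by zeros, up to permutation, and $\Norm{M} = \Norm{[B,D_\PP]} \le 1$. The differences $a_x - a_y = b_x - b_y$ are preserved, which gives $\dD(x,y) \ge d^{D_\PP}(x,y)$.

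The main obstacle is the combinatorial claim that each outside component meets $V'$ in exactly one vertex. I would argue it directly. Suppose a component $C$ is adjacent to $u \ne u'$ in $V'$. Choose a simple $x$--$y$ path $P$ through $u$. Then use a path in $C$ from $u$ to $u'$, together with a segment leading from $u'$ back to $P$ or to $y$ that avoids the earlier part of $P$. This produces a simple $x$--$y$ path through a vertex of $C$, a contradiction. The delicate part is choosing that segment, and the cleanest route is via 2-connectivity of blocks: $u$ and $u'$ lie in blocks on the $x$--$y$ chain, and adding $C$ would merge blocks along that chain.
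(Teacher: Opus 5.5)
Your proof is correct and follows the standard argument for this pruning result; the paper gives no proof of its own and only cites the references. Compression gives $d^{D_\PP}(x,y)\ge \dD(x,y)$, and extending an admissible $B$ by the value at the unique attachment vertex on each component of $G-V'$ (any constant on components of $G$ that avoid $x,y$) makes $[A,D]$ equal to $[B,D_\PP]$ padded with zeros. For the step you flag as delicate, suppose $C$ is adjacent to $u\ne u'$ and $P$ is a simple $x$--$y$ path through $u$: either $u'\in P$ and you splice $u\to C\to u'$ into $P$ directly, or you follow a simple $x$--$y$ path through $u'$ in both directions to its first vertices on $P$, which are distinct, so one of them, $q$, differs from $u$, and splicing $u\to C\to u'\to q$ into $P$ gives a simple $x$--$y$ path through $C$, a contradiction.
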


The noncommutative distance between the endpoints of a path with four vertices (also called a \textit{chain} with three edges) is known; for longer paths, the situation is difficult; see \cite{CLPS25}.

\begin{proposition}[{\cite[Proposition 3]{Bes21}}, {\cite[Corollary 4.2]{CLPS25}}, {\cite{Atz96, BLS94}}]
\label{prop:chain}
 Let $G$ be the path with four vertices, $1 - 2 - 3 - 4$, such that $w_{ij}$ denotes the weight of the edge $\{i,j\}$.  Then,
 \[ \dD(1,4) = \begin{cases}
                \frac{\sqrt{w_{12}^2 + w_{23}^2}\sqrt{w_{34}^2 + w_{23}^2}}{w_{23}}, & w_{23} > \sqrt{w_{12}w_{34}},\\
                w_{12} + w_{34}, & \text{ otherwise.}
               \end{cases}\]
In particular, if $w_{12} = w_{23} = w_{34} = 1$, then $\dD(1,4) = 2$.
\end{proposition}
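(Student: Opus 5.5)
The plan is to reduce the computation to an explicit two-variable optimization. Since $A$ may be taken real and diagonal, write $A=\diag(a_1,a_2,a_3,a_4)$ and set
\[ u=\frac{a_1-a_2}{w_{12}},\qquad v=\frac{a_2-a_3}{w_{23}},\qquad t=\frac{a_3-a_4}{w_{34}}. \]
Then $M=[A,D]$ is the real skew-symmetric tridiagonal matrix with superdiagonal $(u,v,t)$, up to signs. Also $a_1-a_4=w_{12}u+w_{23}v+w_{34}t$, and $(u,v,t)$ ranges freely over $\R^3$ as $A$ varies. So $\dD(1,4)$ is the supremum of this linear form over those $(u,v,t)$ with $\Norm{M}\le 1$.

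\textbf{Step 1: the norm condition.} I will compute the singular values of $M$. The eigenvalues of $M^TM=-M^2$ are $s^2$ (each with multiplicity two), where $s^2$ runs over the roots of
\[ \lambda^2-(u^2+v^2+t^2)\lambda+\Pf(M)^2=0,\qquad \Pf(M)=ut. \]
The determinant of $M$ is $u^2t^2$, and the trace of $-M^2$ is $2(u^2+v^2+t^2)$. Both roots are at most $1$ exactly when $S:=u^2+v^2+t^2\le 2$ and $1-S+u^2t^2\ge 0$. I expect these to be equivalent to
\[ |u|\le 1,\qquad |t|\le 1,\qquad v^2\le (1-u^2)(1-t^2). \]
I will check this directly: if $1-S+u^2t^2\ge0$ with $|u|,|t|\le1$, then $S\le 1+u^2t^2\le 2$. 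Conversely, $|u|>1$ with $v^2\le(1-u^2)(1-t^2)$ would force $|t|>1$ and then $S>2$.

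\textbf{Step 2: reduction to trigonometric form.} Since all weights are positive, the supremum has $u,t\ge0$ and $v=\sqrt{(1-u^2)(1-t^2)}$. Writing $u=\sin\alpha$ and $t=\sin\beta$ with $\alpha,\beta\in[0,\pi/2]$, we get
\[ \dD(1,4)=\max_{\alpha,\beta\in[0,\pi/2]}\; w_{12}\sin\alpha+w_{34}\sin\beta+w_{23}\cos\alpha\cos\beta. \]
For fixed $\beta$, Cauchy--Schwarz in $\alpha$ gives the maximum $\sqrt{w_{12}^2+w_{23}^2\cos^2\beta}$. It remains to maximize
\[ f(\beta)=\sqrt{w_{12}^2+w_{23}^2\cos^2\beta}+w_{34}\sin\beta \]
on $[0,\pi/2]$.

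\textbf{Step 3: the one-variable maximization (main obstacle).} With $c=\cos\beta$, the critical point equation $f'(\beta)=0$ becomes
\[ w_{23}^2\sin\beta=w_{34}\sqrt{w_{12}^2+w_{23}^2c^2}. \]
Squaring and solving for $\sin^2\beta$ gives
\[ \sin^2\beta=\frac{w_{34}^2(w_{12}^2+w_{23}^2)}{w_{23}^2(w_{23}^2+w_{34}^2)}. \]
This is at most $1$ iff $w_{23}^4\ge w_{12}^2w_{34}^2$, that is, iff $w_{23}\ge\sqrt{w_{12}w_{34}}$.
\begin{itemize}
\item \emph{Case $w_{23}>\sqrt{w_{12}w_{34}}$.} Substituting back, I expect $f$ at the critical point to simplify to $\sqrt{w_{12}^2+w_{23}^2}\,\sqrt{w_{34}^2+w_{23}^2}/w_{23}$. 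I will confirm this is a maximum by comparing with the endpoints. We have $f(0)=\sqrt{w_{12}^2+w_{23}^2}$, and $f(\pi/2)=w_{12}+w_{34}$; the latter is smaller by the AM--GM-type inequality equivalent to the case hypothesis.
\item \emph{Case $w_{23}\le\sqrt{w_{12}w_{34}}$.} There is no interior critical point and $f'>0$ on $[0,\pi/2)$, so the maximum is $f(\pi/2)=w_{12}+w_{34}$. This value is attained with $u=t=1$ and $v=0$, and it agrees with the geodesic bound of Proposition \ref{prop:geod}.
\end{itemize}

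The delicate part is the algebraic simplification in the interior case and the sign analysis of $f'$. For the latter I will rewrite $f'(\beta)\ge 0$ as $w_{34}^2(w_{12}^2+w_{23}^2c^2)\ge w_{23}^4(1-c^2)$, which is linear in $c^2$, so checking it at $c^2=0$ and $c^2=1$ suffices. The special case $w_{12}=w_{23}=w_{34}=1$ falls on the boundary $w_{23}=\sqrt{w_{12}w_{34}}$ and gives $\dD(1,4)=2$.
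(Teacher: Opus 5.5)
The paper does not prove this proposition; it quotes it from \cite[Proposition 3]{Bes21}, \cite[Corollary 4.2]{CLPS25} and earlier work. The only path technology it uses internally is the general machinery of \cite[Section 3]{CLPS25}, invoked in Lemma \ref{lem:sine}. Your proposal is a correct, self-contained alternative.

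Its engine is exactly the $4\times 4$ criterion of Proposition \ref{prop:skew}(ii). Since $1-F+\Pf^2=(1-u^2)(1-t^2)-v^2$, the admissible set is $\{|u|\le 1,\ |t|\le 1,\ v^2\le (1-u^2)(1-t^2)\}$. The substitution $u=\sin\alpha$, $t=\sin\beta$, $v=\cos\alpha\cos\beta$ then turns the problem into an elementary two-angle maximization.

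Your route buys elementarity and an explicit extremal $A$. It is in the same spirit as the paper's own $4\times 4$ computations in Theorem \ref{thm:main} and Lemma \ref{lem:skewdeletionbound}. The cited path theory, by contrast, applies to paths of any length, whereas your clean description of the admissible set is special to three edges.

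A few statements should be tightened, though none affects correctness.
\begin{itemize}
\item In Step 1, $|u|>1$ together with $v^2\le(1-u^2)(1-t^2)$ forces only $|t|\ge 1$, not $|t|>1$. The conclusion $S>2$ still follows because $u^2>1$.
\item In Step 3, the comparison of the critical value with $f(\pi/2)=w_{12}+w_{34}$ reduces to $(w_{23}^2-w_{12}w_{34})^2\ge 0$. This holds unconditionally; it is the inequality $\Lambda\ge\sigma$ of Section \ref{sect:4cycle} after relabeling. It is not equivalent to the case hypothesis.
\item The endpoint comparison relies on the interior critical point being unique, so say this explicitly. Squaring $w_{23}^2\sin\beta=w_{34}\sqrt{w_{12}^2+w_{23}^2\cos^2\beta}$ is reversible because both sides are nonnegative.
\end{itemize}

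A cleaner way to handle both cases at once is to write $f'(\beta)=\cos\beta\, g(\beta)$, where $g(\beta)=w_{34}-w_{23}^2\sin\beta/\sqrt{w_{12}^2+w_{23}^2\cos^2\beta}$. This $g$ is strictly decreasing on $[0,\pi/2]$, with $g(0)=w_{34}>0$ and $g(\pi/2)=w_{34}-w_{23}^2/w_{12}$. Hence $f$ has a single interior peak exactly when $w_{23}^2>w_{12}w_{34}$, and is nondecreasing otherwise.
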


We now collect some facts about matrices.  For $S \subseteq \{1, \dots, n\}$, we denote by $M_S$ the principal submatrix of $M$ on $S$; the matrix $M_S$ is sometimes referred to as the \textit{compression} of $M$ to $S$.  The following is a well-known result regarding matrix norms.

\begin{lemma}\cite[Corollary 3.1.3]{HJ94}
 \label{lem:compression}
 For $S \subseteq \{1, \dots, n\}$, $\Norm{M_S} \le \Norm{M}.$
\end{lemma}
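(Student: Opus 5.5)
The plan is to realize the compression $M_S$ as $P^{*}MP$, where $P\colon \C^{|S|}\to\C^n$ is the isometric inclusion onto the coordinates indexed by $S$. Submultiplicativity of the operator norm then gives $\Norm{M_S}\le\Norm{P^*}\Norm{M}\Norm{P}\le\Norm{M}$.

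Concretely, let $S=\{s_1<\dots<s_k\}$ and let $P$ be the $n\times k$ matrix whose $j$th column is the standard basis vector $e_{s_j}$. The columns of $P$ are orthonormal, so $P^*P=I_k$. Hence $\Norm{Pv}=\Norm{v}$ for all $v\in\C^k$, and therefore $\Norm{P}=1$. Since $\Norm{P^*}=\Norm{P}$, we also get $\Norm{P^*}=1$. A direct entrywise check gives $(P^*MP)_{ij}=e_{s_i}^*Me_{s_j}=m_{s_is_j}$, so $P^*MP=M_S$.

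Combining these facts, $\Norm{M_S}=\Norm{P^*MP}\le\Norm{P^*}\,\Norm{M}\,\Norm{P}=\Norm{M}$.

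An equivalent variational route uses
\[\Norm{M_S}=\sup_{\Norm{u}=\Norm{v}=1}|u^*M_Sv|.\]
Extending $u$ and $v$ by zero to vectors $\tilde u,\tilde v\in\C^n$ preserves their norms, and $u^*M_Sv=\tilde u^*M\tilde v$. Each quantity in the supremum is therefore bounded by $\Norm{M}$.

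There is no real obstacle here. The only points needing care are the identification $P^*MP=M_S$ and the fact that $\Norm{\cdot}$ denotes the operator (spectral) norm, which is the norm used in \eqref{eq:def}.
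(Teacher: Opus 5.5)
Your proof is correct; it differs from the paper's only in that the paper gives no argument and instead cites Horn and Johnson's Corollary 3.1.3. That result is the interlacing theorem for singular values of a submatrix, $\sigma_k(M)\ge\sigma_k(M_S)\ge\sigma_{k+r}(M)$, where $r$ is the total number of deleted rows and columns. It rests on the min--max characterization of singular values, and the lemma is its $k=1$ upper half.

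Your route is more direct. Writing $M_S=P^*MP$ with $P$ having orthonormal coordinate columns and applying submultiplicativity proves exactly the top-singular-value inequality, with no min--max machinery. Your zero-padding variant with $\sup_{\Norm{u}=\Norm{v}=1}|u^*M_Sv|$ is simply the $k=1$ case of the Courant--Fischer argument behind the citation.

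The cited theorem gives more than the paper ever uses, namely control of all singular values of $M_S$ including lower bounds. Only $\Norm{M_S}\le\Norm{M}$ is needed, in the odd-cycle and $n\equiv 2 \pmod 4$ arguments, and your argument covers that self-containedly. It is in fact the same device the paper uses in the proof of Lemma \ref{lem:skewdeletionbound}, where $\Norm{U^tMU}\le\Norm{U^t}\Norm{M}\Norm{U}=\Norm{M}$ is applied for a general $U$ with orthonormal columns.
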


Finally, we recall some facts about real skew-symmetric matrices; these are particularly useful in our calculations since $[A,D]$ is skew-symmetric.

For a skew-symmetric matrix $N$ of even size $2k$ and indexed by an ordered set $S$, we define the \textit{Pfaffian} of the matrix $N$ by
 \begin{equation}
 \label{eq:Pf}
 \Pf(N) = \sum_{F} \sgn(F) \prod_{\{i,j\} \in F} n_{ij},
 \end{equation}
 where the sum is over all perfect matchings
 \[ F = \{ \{i_1, j_1\}, \dots \{i_k, j_k\}\}\]
 of the set $S$ and $\sgn(F)$ is the sign of the permutation sending
 \[ x \mapsto \begin{cases}
               i_{(x+1)/2}, &\text{ if } x \text{ is odd},\\
               j_{x/2}, &\text{ if } x \text{ is even};
              \end{cases}
\]
see \cite[pp. 317--318]{LP09}.

We need the following technical lemma regarding the Pfaffian of our weighted $n$-cycle, where $n$ is even.

\begin{lemma}
 \label{lem:Pf4}
 Let $n \ge 4$ be even and suppose $M = (m_{i,j}) = [A,D]$, where $A$ is a diagonal matrix and $D$ is the Dirac operator for an $n$-cycle.  Then,
 \[ \Pf(M) = m_{12}m_{34} \cdots m_{n-1,n} - m_{23}m_{45} \cdots m_{n,1}.\]
\end{lemma}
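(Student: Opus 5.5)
The plan is to compute $\Pf(M)$ directly from the matching expansion \eqref{eq:Pf}. First I record the structure of $M=[A,D]$. Writing $A=\diag(a_1,\dots,a_n)$, we get $m_{ij}=(a_i-a_j)d_{ij}$. Since $D$ is supported on the edges of the $n$-cycle $1-2-\cdots-n-1$, we have $m_{ij}=0$ unless $\{i,j\}$ is a cycle edge; the diagonal entries vanish automatically. Hence in \eqref{eq:Pf} only the perfect matchings of $\{1,\dots,n\}$ that use cycle edges exclusively can contribute. These are exactly the perfect matchings of the cycle graph $C_n$.

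Next I classify the perfect matchings of $C_n$ for even $n\ge 4$. Suppose a perfect matching contains the edge $\{1,2\}$. Then vertex $3$ must be matched to $4$, since $2$ is already used. Continuing inductively forces $F_1=\{\{1,2\},\{3,4\},\dots,\{n-1,n\}\}$. If instead $\{1,2\}$ is not used, then $1$ must be matched to $n$. Then $2$ must be matched to $3$, and so on, forcing $F_2=\{\{2,3\},\{4,5\},\dots,\{n-2,n-1\},\{n,1\}\}$. So there are exactly two contributing matchings, and
\[ \Pf(M)=\sgn(F_1)\,m_{12}m_{34}\cdots m_{n-1,n}+\sgn(F_2)\,m_{23}m_{45}\cdots m_{n-2,n-1}\,m_{n,1}. \]

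It remains to compute the signs, and this is the only step needing care. The sign depends on the orientation chosen for each pair, but the orientation convention is compensated by the skew-symmetry $m_{ji}=-m_{ij}$. For $F_1$, with pairs written as $(1,2),(3,4),\dots$, the associated permutation is the identity, so $\sgn(F_1)=+1$. For $F_2$, write the pairs as $(2,3),(4,5),\dots,(n-2,n-1),(n,1)$, which matches the entry $m_{n,1}$ in the statement. The permutation then sends $x\mapsto x+1$ for $1\le x\le n-1$ and $n\mapsto 1$. This is an $n$-cycle, whose sign is $(-1)^{n-1}=-1$ because $n$ is even. Substituting both signs gives the claimed formula $\Pf(M)=m_{12}m_{34}\cdots m_{n-1,n}-m_{23}m_{45}\cdots m_{n,1}$. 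The main obstacle is just keeping the sign conventions consistent: the pair $\{n,1\}$ must be recorded in the order $(n,1)$ to match $m_{n,1}$. If it were recorded as $(1,n)$, the permutation would change by a transposition and the entry would become $m_{1,n}=-m_{n,1}$, so the product is unchanged.
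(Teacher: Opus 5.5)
Your proposal is correct and is the paper's argument: only the two perfect matchings of the cycle survive in the expansion \eqref{eq:Pf}, and their signs come from the identity permutation and an $n$-cycle, which has sign $-1$ because $n$ is even. You have written out in full the sign computation that the paper only calls straightforward.
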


\begin{proof}
 Recall the definition of the Pfaffian \eqref{eq:Pf}.  For $M$, only the two perfect matchings of the cycle contribute, and the sign of each term is a straightforward calculation.
\end{proof}

The results collected in the following proposition follow from the canonical form for such matrices (see, e.g., \cite[Theorem 5]{KL97}), basic results about Pfaffians (see, e.g., \cite[Chapter 7]{God93}), and an examination of the characteristic polynomial of $M$.

\begin{proposition}
 \label{prop:skew}
 Let $M \in M_{2r}(\R)$ be a skew-symmetric matrix.
 \begin{itemize}
  \item[(i)] The nonzero eigenvalues of $M$ occur in pairs $\pm is_j$, where each $s_j$ is a singular value of $M$; in particular, each singular value $s_j$ has even multiplicity.
  \item[(ii)] Let $M = (m_{ij}) \in M_4(\R)$, $F = F(M) = \sum_{i<j} m_{ij}^2$, and $\Pf = \Pf(M) = m_{12}m_{34} - m_{13}m_{24} + m_{14}m_{23}$.  Then, the following hold:
    \begin{itemize}
      \item[\textbullet] $\det M = \Pf(M)^2$,
      \item[\textbullet] the two singular values $s_1 \ge s_2$ each have multiplicity two and satisfy $s_1^2 + s_2^2 = F$ and $s_1s_2 = |\Pf|$,
      \item[\textbullet] the eigenvalues of $iM$ are $\pm \frac{1}{\sqrt{2}}\left(F \pm \sqrt{F^2 - 4\Pf^2} \right)^{1/2}$,
      \item[\textbullet] $\Norm{M} \le 1 \iff F \le 2 \text{ and } F \le 1 + \Pf^2.$
    \end{itemize}
 \end{itemize}
\end{proposition}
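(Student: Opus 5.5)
For part (i) I would use that $M$ is normal, since $MM^T = -M^2 = M^TM$. By the spectral theorem its singular values are therefore the absolute values of its eigenvalues. Because $M^T=-M$, the matrix $\ii M$ is Hermitian, so every eigenvalue of $M$ is purely imaginary. Because $M$ is real, its characteristic polynomial has real coefficients, so nonreal eigenvalues come in conjugate pairs $\pm \ii s_j$. Equivalently, I would quote the real canonical form of \cite[Theorem 5]{KL97}: there is an orthogonal $Q$ with $Q^TMQ$ block diagonal, with blocks $\begin{pmatrix} 0 & s_j\\ -s_j & 0\end{pmatrix}$ and zero blocks. Each block contributes the singular value $s_j$ twice, and in even size $2r$ the zero singular values also come in pairs. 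This gives the even multiplicity.

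For part (ii) I would first record $\det M = \Pf(M)^2$, either from \cite[Chapter 7]{God93} or by expanding the $4\times 4$ determinant directly. Next I would compute the characteristic polynomial $\det(\lambda I - M)$. Since $M$ is skew-symmetric of even size, the odd coefficients vanish: $\det(-\lambda I - M)=\det(\lambda I + M^T)$, which equals $\det(\lambda I-M)$ by transposition, so the polynomial is even in $\lambda$. The coefficient of $\lambda^2$ is the sum of the principal $2\times 2$ minors, namely $\sum_{i<j} m_{ij}^2 = F$. The constant term is $\det M=\Pf^2$. Hence
\[ \det(\lambda I - M) = \lambda^4 + F\lambda^2 + \Pf^2 .\]
Substituting $\lambda = \pm \ii s$, the numbers $t=s^2$ are the roots of $q(t) = t^2 - Ft + \Pf^2$. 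Vieta then gives $s_1^2+s_2^2=F$ and $s_1^2s_2^2=\Pf^2$, so $s_1s_2=|\Pf|$ because the $s_j$ are nonnegative. The quadratic formula gives $s_j^2 = \tfrac12\bigl(F\pm\sqrt{F^2-4\Pf^2}\bigr)$, and these roots are real by part (i). This yields the stated eigenvalues $\pm s_j$ of the Hermitian matrix $\ii M$, each pair of multiplicity two by part (i).

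The step needing the most care is the norm criterion. Here $\Norm{M}=s_1$, so $\Norm{M}\le 1$ iff the larger root $s_1^2$ of $q$ is at most $1$. The polynomial $q$ is an upward parabola whose two roots are real (and nonnegative). For such a parabola, the larger root is $\le 1$ exactly when both of the following hold:
\begin{itemize}
\item[\textbullet] the point $1$ lies outside the open interval between the roots, i.e.\ $q(1)\ge 0$;
\item[\textbullet] $1$ lies to the right of the vertex, i.e.\ $F/2 \le 1$.
\end{itemize}
Indeed, if $q(1)\ge 0$ then $1$ is either at least the larger root or at most the smaller one, and the vertex condition excludes the second case unless both roots equal $1$. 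Since $q(1)=1-F+\Pf^2$, the conditions become $F\le 2$ and $F\le 1+\Pf^2$, as claimed. I would check the boundary cases of equal roots separately, but they pose no difficulty.
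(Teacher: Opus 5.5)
Your proposal is correct and follows the route the paper indicates. The paper only sketches this proof, citing the canonical form, $\det M = \Pf(M)^2$, and the characteristic polynomial; your computation $\det(\lambda I - M) = \lambda^4 + F\lambda^2 + \Pf^2$, the Vieta step, and the root-location argument for $q(t) = t^2 - Ft + \Pf^2$ (including the equal-roots case of the norm criterion) fill in those details correctly. One cosmetic slip: in showing the characteristic polynomial is even, the step $\det(-\lambda I - M) = \det(\lambda I + M^t)$ is where even size plus transposition is used, whereas $\det(\lambda I + M^t) = \det(\lambda I - M)$ is just skew-symmetry, so your justifications are swapped, though the equalities themselves are right.
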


\begin{remark}
 The formula for the Pfaffian from Proposition \ref{prop:skew} agrees with the formula from Lemma \ref{lem:Pf4} when we note that $m_{14} = -m_{41}$ and $m_{13} = m_{24} = 0$.
\end{remark}


\section{An example of edge deletion decreasing the noncommutative distance between two vertices}
\label{sect:counterex}

In this section, we present a simple example of a weighted 4-cycle showing that deleting an edge can in fact decrease the noncommutative distance between two vertices.

\begin{proof}[Proof of Theorem \ref{thm:main}]
 Fix $0 < \ep < 2/5$, let $\kappa = \kappa(\ep)$, and let
 \[ A = \diag\left( \kappa, 0, \kappa\frac{1 - \ep}{2}, \kappa\frac{1 + \ep}{2}\right).\]
 If $M = (m_{ij}) = [A,D]$, then $m_{ij} = (a_i - a_j)d_{ij}$, $M$ is skew-symmetric, and
 \[ M = \begin{pmatrix}
         0          & \kappa\ep & 0      & \kappa\frac{1 - \ep}{2}\\
         -\kappa\ep & 0         & -\kappa\frac{1 - \ep}{2} & 0\\
         0          & \kappa\frac{1 - \ep}{2} & 0 & -\kappa\ep\\
         -\kappa\frac{1 - \ep}{2} & 0 & \kappa\ep & 0\\
        \end{pmatrix}.
\]
Moreover,
\[MM^t = \left((\kappa\ep)^2 + \left(\kappa\frac{1 - \ep}{2} \right)^2\right)I = I,\]
so $\Norm{[A,D]} = \Norm{M} = 1$, $A$ is admissible in \eqref{eq:def}, and $\dD(1,2) \ge a_1 - a_2 = \kappa$.

On the other hand, deleting the edge $\{1,2\}$ leaves a path on four vertices with endpoints $1$, $2$ and all edges having weight $1$.  By Propositions \ref{prop:prune} and \ref{prop:chain}, $d^{D'}(1,2) = 2$.

Since $0 < \ep < 2/5$, $\ep(5\ep - 2) < 0$, and so $\kappa = \kappa(\ep) > 2$, completing the proof that $\dD(1,2) > d^{D'}(1,2)$.  Finally, since $5\ep^2 - 2\ep + 1$ is minimized at $\ep = 1/5$, the largest value of $\kappa$ is $\sqrt{5}$.
\end{proof}

\begin{example}
 In terms of weights, the weighted $4$-cycle with $d_{12} = 1/5$ and $d_{23} = d_{34} = d_{41} = 1$ corresponds to a cycle whose edges have weights $5$, $1$, $1$, $1$.  See Figure \ref{fig:main}.

 \begin{figure}[h]
\centering
\begin{tikzpicture}[scale=1.35]
  \node[vtx,label=left:{$1$}]  (a) at (0,1) {};
  \node[vtx,label=right:{$2$}] (b) at (1.6,1) {};
  \node[vtx,label=right:{$3$}] (c) at (1.6,0) {};
  \node[vtx,label=left:{$4$}]  (d) at (0,0) {};
  \draw (a)--(b) node[wt,midway,above]{$1/\varepsilon$};
  \draw (b)--(c) node[wt,midway,right]{$1$};
  \draw (c)--(d) node[wt,midway,below]{$1$};
  \draw (d)--(a) node[wt,midway,left]{$1$};
  \node at (0.8,-0.75) {$G$:\ \ $d^D(1,2)=\dfrac{2}{\sqrt{5\varepsilon^{2}-2\varepsilon+1}}$};
  \begin{scope}[xshift=5.2cm]
  \node[vtx,label=left:{$1$}]  (a) at (0,1) {};
  \node[vtx,label=right:{$2$}] (b) at (1.6,1) {};
  \node[vtx,label=right:{$3$}] (c) at (1.6,0) {};
  \node[vtx,label=left:{$4$}]  (d) at (0,0) {};
  \draw[del] (a)--(b) node[wt,midway,above]{$e$};
  \draw (b)--(c) node[wt,midway,right]{$1$};
  \draw (c)--(d) node[wt,midway,below]{$1$};
  \draw (d)--(a) node[wt,midway,left]{$1$};
  \node at (0.8,-0.75) {$G-e$:\ \ $d^{D'}(1,2)=2$};
  \end{scope}
\end{tikzpicture}
\medskip
\caption{Theorem~\ref{thm:main}. Edges are labeled by their weights $w_{ij}=d_{ij}^{-1}$. For
$\varepsilon<\tfrac25$, i.e.\ $w(1,2)>\tfrac52$, deleting $e$ \textit{decreases} the
distance; at $\varepsilon=\tfrac15$ the values are $\sqrt5$ and $2$.}
\label{fig:main}
\end{figure}
 \end{example}

\begin{remark}
 Although explicit formulas are known for the noncommutative distance between two vertices in weighted 4-cycles \cite[Theorem 9]{IKM01}, there are small typos in the stated result.  We provide a corrected version (see Proposition \ref{prop:ikm}) and use it to give a complete analysis of the weighted 4-cycle in Section \ref{sect:4cycle}, but the proof of Theorem \ref{thm:main} does not rely on these formulas for a weighted 4-cycle in order to keep it self-contained.  By Proposition \ref{prop:ikm}, we have $\dD(1,2) = \kappa$ in Theorem \ref{thm:main}.
\end{remark}


\section{Full analysis of edge deletion in a weighted 4-cycle}
\label{sect:4cycle}

In this section, $G$ is the $4$-cycle $1 - 2 - 3 - 4 - 1$ whose Dirac operator $D$ has entries $d_{13} = d_{24} = 0$ while $d_{12}, d_{23}, d_{34}, d_{14} > 0$; the edge $e = \{1,2\}$ is the edge to be deleted; and statements are phrased in terms of the weights $w_{ij} = d_{ij}^{-1}$ (see Figure \ref{fig:c4}).

\begin{figure}[h]
\centering
\begin{tikzpicture}[scale=1.5]
  \node[vtx,label=left:{$1$}]  (v1) at (0,1) {};
  \node[vtx,label=right:{$2$}] (v2) at (1.9,1) {};
  \node[vtx,label=right:{$3$}] (v3) at (1.9,0) {};
  \node[vtx,label=left:{$4$}]  (v4) at (0,0) {};
  \draw[del] (v1)--(v2) node[wt,midway,above]{$w_{12}$};
  \draw (v2)--(v3) node[wt,midway,right]{$w_{23}$};
  \draw (v3)--(v4) node[wt,midway,below]{$w_{34}$};
  \draw (v4)--(v1) node[wt,midway,left]{$w_{14}$};
\end{tikzpicture}
\medskip
\caption{Notation of \S4: the dashed edge, of weight $w_{12}$, is the one to be deleted,
and $d^D(1,2)$ is the quantity of interest. Deleting it leaves a path on four vertices with weights
$w_{14}-w_{34}-w_{23}$.}
\label{fig:c4}
\end{figure}

By Propositions \ref{prop:prune} and \ref{prop:chain}, deleting $e$ leaves a path on four vertices joining $1$ and $2$ whose edges (going from $1$ to $2$) have weights $w_{14}$, $w_{34}$, and $w_{23}$, so
\begin{equation}
 \label{eq:4path}
 d^{D'}(1,2) = \begin{cases}
                \frac{\sqrt{w_{14}^2 + w_{34}^2}\sqrt{w_{23}^2 + w_{34}^2}}{w_{34}}, & w_{34} > \sqrt{w_{14}w_{23}},\\
                w_{14} + w_{23}, & \text{ otherwise.}
               \end{cases}
\end{equation}

\subsection{The formula for the noncommutative distance between two vertices in a 4-cycle}

We now take the opportunity to correct the statement of \cite[Theorem 9]{IKM01}.  The analysis there is fundamentally correct, and this ``correction'' largely amounts to correcting typos in the statement.

\begin{proposition}[{\cite[Theorem 9]{IKM01}, corrected}]
 \label{prop:ikm}
 Using the notation above, let
 \[ C \colonequals \left((w_{14} + w_{23})^2w_{34} +(w_{12} - w_{34})(w_{14}w_{23} - w_{34}^2) \right)\left((w_{14} - w_{23})^2w_{34} +(w_{12} + w_{34})(w_{14}w_{23} + w_{34}^2) \right).\]
 Then,
 \[ \dD(1,2) = \begin{cases}
                w_{12}, & w_{12} \le w_{34},\\
                \dfrac{w_{12}\sqrt{w_{14}^2 + w_{34}^2}\sqrt{w_{23}^2 + w_{34}^2}}{|w_{14}w_{23} - w_{12}w_{34}|}, & w_{12} > w_{34} \text{ and } C \le 0, \\
                \dfrac{w_{12}(w_{14} + w_{23})}{\sqrt{(w_{14} + w_{23})^2 + (w_{12} - w_{34})^2}}, & \text{ otherwise,}
               \end{cases}
\]
and
\[ \dD(1,3) = \begin{cases}
               \sqrt{w_{14}^2 + w_{34}^2}, & (w_{14}^2 + w_{34}^2)^2 \le (w_{12}w_{34} - w_{14}w_{23})^2,\\
               \sqrt{w_{12}^2 + w_{23}^2}, & (w_{12}^2 + w_{23}^2)^2 \le (w_{12}w_{34} - w_{14}w_{23})^2,\\
               \dfrac{w_{12}w_{14} + w_{23}w_{34}}{\sqrt{(w_{14} + w_{23})^2 + (w_{12} - w_{34})^2}}, & (w_{14}^2 + w_{34}^2)^2, (w_{12}^2 + w_{23}^2)^2 > (w_{12}w_{34} - w_{14}w_{23})^2\\
                     & \text{ and } w_{12}w_{34} \ge w_{14}w_{23},\\
               \dfrac{w_{12}w_{14} + w_{23}w_{34}}{\sqrt{(w_{14} - w_{23})^2 + (w_{12} + w_{34})^2}}, & \text{ otherwise.}
              \end{cases}
\]
\end{proposition}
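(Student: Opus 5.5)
We reduce the computation to a finite-dimensional optimization over the free entries of $A$ and then use the $4\times 4$ criterion of Proposition \ref{prop:skew}(ii). Since translating $A$ by a multiple of the identity does not change $[A,D]$, we normalize $a_2=0$ for $\dD(1,2)$ (respectively $a_3=0$ for $\dD(1,3)$). Write $m_{ij}=(a_i-a_j)d_{ij}$ for the four cycle edges; all other off-diagonal entries of $M=[A,D]$ vanish. Then
\[ F=m_{12}^2+m_{23}^2+m_{34}^2+m_{14}^2,\qquad \Pf=m_{12}m_{34}+m_{14}m_{23} \]
(with the sign convention $m_{14}=-m_{41}$, as in the Remark after Proposition \ref{prop:skew}). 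Admissibility is then exactly the pair of conditions $F\le 2$ and $F\le 1+\Pf^2$. The quantity $\dD(1,2)$ is the maximum of $a_1$ over this semialgebraic region in $(a_1,a_3,a_4)$. The maximum is attained because the region is compact once $a_2$ is fixed, since the graph is connected.

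Next we split by which constraints are active at the maximizer, which produces the three cases.

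\emph{Case (a): the singular values are equal.} If $s_1=s_2=1$, then $F=2$ and $|\Pf|=1$, so $MM^t=I$, as in the proof of Theorem \ref{thm:main}. Here we maximize $a_1$ by Lagrange multipliers. Equivalently, we require the two perfect-matching vectors $(m_{12},m_{34})$ and $(m_{14},m_{23})$ to satisfy the orthogonality-type relations making $M/1$ orthogonal: $m_{12}^2+m_{14}^2=1$, $m_{12}^2+m_{23}^2=1$, and so on. Solving these gives the "otherwise'' formula
\[ \frac{w_{12}(w_{14}+w_{23})}{\sqrt{(w_{14}+w_{23})^2+(w_{12}-w_{34})^2}}. \]

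\emph{Case (b): only $s_1\le 1$ is active.} When only the constraint $s_1\le1$ is active, with $F=1+\Pf^2<2$, we maximize $a_1$ on that surface. The Lagrange system should yield the middle formula with denominator $|w_{14}w_{23}-w_{12}w_{34}|$. That formula also arises as the bound $|\Pf|/\ldots$ obtained by noting that the rank-deficient configuration is optimal.

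\emph{Case (c): the trivial case.} When $w_{12}\le w_{34}$, the answer is $w_{12}$. The lower bound comes from choosing $A$ so that $M$ is supported on the edge $\{1,2\}$ plus compatible entries. The upper bound is $\ell^D(1,2)\le w_{12}$ by Proposition \ref{prop:geod}.

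To decide which regime occurs, we evaluate the case (b) candidate and check whether it satisfies $s_2\le 1$, i.e.\ $\Pf^2\le 1$. That candidate has $s_2<1$ exactly when $C\le0$; otherwise the maximum must lie on the corner $F=2$, $|\Pf|=1$, which is case (a). The factorization of the discriminant into the two displayed factors of $C$ should come out of substituting the case (b) optimizer into $\Pf^2-1$.

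The same scheme handles $\dD(1,3)$. The first two cases there correspond to optima where one "half'' of the cycle is effectively a bridge-like path, giving $\sqrt{w_{14}^2+w_{34}^2}$ via the chain analysis of Proposition \ref{prop:chain} with a degenerate middle. The last two correspond to the corner $MM^t=I$, with the two sign choices $\Pf=\pm1$, which is why $w_{12}w_{34}\ge w_{14}w_{23}$ appears.

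The main obstacle is the bookkeeping of case (b). We must solve the Lagrange system on $F=1+\Pf^2$ explicitly and, more importantly, prove that the sign of $C$ exactly separates feasibility of this candidate from the corner solution. We also need to rule out other local maxima on the boundary of the feasible region.
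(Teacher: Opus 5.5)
You set the problem up correctly. With $a_2=0$, admissibility is $F\le 2$ and $F\le 1+\Pf^2$. The feasible set $\{a:\Norm{[A,D]}\le 1\}$ is convex, and its boundary splits into the corner $s_1=s_2=1$ and the smooth part $s_1=1>s_2$.

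Two pieces are essentially fine:
\begin{itemize}
\item For $w_{12}\le w_{34}$, the upper bound is $\ell^D(1,2)\le w_{12}$. The witness is $A=\diag(w_{12},0,0,w_{12})$, whose commutator is supported on $\{1,2\}$ \emph{and} $\{3,4\}$, with entries $1$ and $-w_{12}/w_{34}$. A commutator supported on $\{1,2\}$ alone is impossible on a cycle.
\item At the corner, $MM^t=I$ together with the cycle relation $w_{12}m_{12}+w_{23}m_{23}+w_{34}m_{34}-w_{14}m_{14}=0$ leaves, up to sign, exactly two configurations, with $\Pf=-1$ and $\Pf=1$. They give the values $x_2$ and $x_3$ of Remark \ref{rem:ikm}(3). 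So there is nothing to maximize there, but you do need $x_2\ge x_3$. For $\dD(1,3)$ the same two families give the last two branches; their denominators differ by $4(w_{12}w_{34}-w_{14}w_{23})$, which is where the condition $w_{12}w_{34}\ge w_{14}w_{23}$ comes from.
\end{itemize}
However, everything that makes the proposition nontrivial is only asserted (``should yield'', ``should come out''). That includes the middle formula for $\dD(1,2)$, the fact that $C\le 0$ is exactly the regime where it applies, and the first two branch conditions for $\dD(1,3)$. As it stands this is a plan rather than a proof. The paper does not rederive the proposition either: it relies on \cite{IKM01}, corrects its misprints, and checks the result numerically. A completed version of your argument would therefore be an independent derivation.

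The missing piece is the smooth-part analysis, and it involves more than one candidate. Take $g=1-F+\Pf^2$ in the coordinates $(m_{12},m_{23},m_{34},m_{14})$. A maximizer with $s_1=1>s_2$ must satisfy $w_{12}e_{12}=-\lambda\nabla g+\nu\nabla\ell$ with $\lambda\ge 0$. By convexity, every such point with $|\Pf|<1$ is a global maximizer, so ``ruling out other local maxima'' is not the issue. The issue is finding \emph{all} solutions, since only a complete list justifies ``otherwise the maximum lies on the corner''.
\begin{itemize}
\item The system always has the solution $m=(1,0,-w_{12}/w_{34},0)$, where $\nabla g$ is a multiple of $e_{12}$. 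It is feasible only when $w_{12}\le w_{34}$.
\item For instance, with $w_{14}=w_{23}=1$, $w_{34}=2$, $w_{12}=W$, there is also a solution with $\Pf=-(W+12)/(4W-2)$. It satisfies $|\Pf|\le 1$ exactly when $W\ge 14/3$, which is precisely $C\le 0$ here, and then gives $5W/(2W-1)$, the middle formula.
\end{itemize}
You have to carry this out for general weights, including the sign of $\lambda$.

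Your heuristic that the optimum is ``rank-deficient'' is false if it means $\Pf=0$. In that case $\nabla g=-2m$, and stationarity gives $m=(\alpha w_{12},-\kappa w_{23},-\kappa w_{34},\kappa w_{14})$. The cycle relation then gives $\alpha w_{12}^2=\kappa(w_{14}^2+w_{23}^2+w_{34}^2)$. Combined with $\Pf=-\kappa(\alpha w_{12}w_{34}+\kappa w_{14}w_{23})=0$, this forces $m=0$.

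Similarly, the first two branches of $\dD(1,3)$ do not come from Proposition \ref{prop:chain}. Letting its middle weight degenerate gives $w_{14}+w_{34}$, not $\sqrt{w_{14}^2+w_{34}^2}$. Instead, the bound $\dD(1,3)\le\sqrt{w_{14}^2+w_{34}^2}$ is the distance on the induced path $1-4-3$, by Proposition \ref{prop:deletion} and Cauchy--Schwarz. Attaining it forces $(m_{14},m_{34})=(w_{14},-w_{34})/\sqrt{w_{14}^2+w_{34}^2}$, and then equality in Cauchy--Schwarz inside $F\le 1+\Pf^2$. That means $(m_{12},m_{23})=t(-w_{34},w_{14})$ with $t=\sqrt{w_{14}^2+w_{34}^2}/(w_{14}w_{23}-w_{12}w_{34})$. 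After this, $F\le 2$ is exactly $(w_{14}^2+w_{34}^2)^2\le(w_{12}w_{34}-w_{14}w_{23})^2$. Showing that the corner wins outside these two branches again needs the complete smooth-part enumeration, now for the objective $w_{12}m_{12}+w_{23}m_{23}$.
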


\begin{remark}
 \label{rem:ikm}
 \begin{itemize}
  \item[(1)] Setting $d_1 = w_{12}$, $d_2 = w_{13} = \infty$, $d_3 = w_{14}$, $d_4 = w_{23}$, $d_5 = w_{24} = \infty$, and $d_6 = w_{34}$ recovers the notation of \cite[Theorem 9]{IKM01}.
  \item[(2)] The hypothesis is stated there as ``$1/d_2 = 1/d_5 = \infty$'' and in the proof as ``$d_2 = d_5 = 0$''; the hypothesis actually used is ``$d_2 = d_5 = \infty$''.
  \item[(3)] There is a simplification to the third branch of $\dD(1,2)$ from the printed version.  Let
  \[ x_2 = \frac{w_{12}(w_{14} + w_{23})}{\sqrt{(w_{14} + w_{23})^2 + (w_{12} - w_{34})^2}}, \qquad x_3 = \frac{w_{12}|w_{14} - w_{23}|}{\sqrt{(w_{14} - w_{23})^2} + (w_{12} + w_{34})^2};\]
  in the original statement, the third branch was $\max(x_2, x_3)$, and it turns out $x_2 \ge x_3$ always.  When comparing $x_2$ to $x_3$, we can cancel the common factor $w_{12}$, and after some simplifying,
  we find that
  \[ x_2 \ge x_3 \iff \frac{1}{\sqrt{1 + \left(\dfrac{w_{12} - w_{34}}{w_{14} + w_{23}}\right)^2}} \ge \frac{1}{\sqrt{1 + \left(\dfrac{w_{12} + w_{34}}{w_{14} - w_{23}} \right)^2}}.\]
  Noting that the function $t \mapsto 1/\sqrt{1 + (c/t)^2}$ is strictly decreasing as a function of $c/t$, this implies that
  \[ x_2 \ge x_3 \iff \frac{|w_{12} - w_{34}|}{w_{14} + w_{23}} \le \frac{w_{12} + w_{34}}{|w_{14} - w_{23}|}.\]
  This further implies
  \[ x_2 \ge x_3 \iff (w_{14} + w_{23})(w_{12} + w_{34}) \ge |w_{14} - w_{23}| \cdot |w_{12} - w_{34}|.\]
  This last inequality is immediate from the triangle inequality since all $w_{ij}$ are nonnegative.  Hence $x_2 \ge x_3$.
  \item[(4)] The separate ``$d_1d_6 = d_3d_4$'' branch from $\dD(1,2)$ can be dropped: in that case, the first factor of $C$ equals $(d_3 + d_4)^2d_6 + d_6(d_1 - d_6)^2 > 0$ and the second is also positive, so $C > 0$ and the third branch applies (and, in fact, the formula from the ``$d_1d_6 = d_3d_4$'' branch coincides with our formula above when $C > 0$, subject to the assumption that $d_1d_6 = d_3d_4$).
  \item[(5)] The numerator in the third branch of $\dD(1,2)$ was printed in the statement as $d_1\sqrt{d_3^2 + d_4^2}$; however, the correct value printed above is what was used in the proof.
  \item[(6)] The left sides of the inequalities in the first two branches of $\dD(1,3)$ need to be squared; again, this is what was used in the original proof.
  \item[(7)] The result printed above was verified numerically using a semidefinite-program computation by Claude Opus 5 over 1000 random weighted $4$-cycles, which included at least 100 examples corresponding to each branch in each formula.
 \end{itemize}
\end{remark}

\subsection{Analysis}

In order to simplify the calculations, we introduce some notation.  First, recall the definition of $C$ from Proposition \ref{prop:ikm}.  We will define
\[ \Lambda \colonequals \frac{\sqrt{w_{14}^2 + w_{34}^2}\sqrt{w_{23}^2 + w_{34}^2}}{w_{34}},\]
\[ \sigma \colonequals w_{14} + w_{23}, \]
and $\lambda \colonequals d^{D'}(1,2)$, so that
\[ \lambda = \begin{cases}
                \Lambda, & w_{34} > \sqrt{w_{14}w_{23}},\\
                \sigma, & \text{ otherwise.}
               \end{cases}\]
Note that $\Lambda \ge \lambda \ge \sigma$, since $\Lambda \ge \sigma$ is equivalent to $(w_{14}w_{23} - w_{34}^2)^2 \ge 0$ after clearing denominators.

\begin{lemma}
 \label{lem:notbranch1}
 Using the notation above, if $\dD(1,2) > d^{D'}(1,2)$, then $w_{12} > w_{34}$.
\end{lemma}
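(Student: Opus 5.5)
We argue by contraposition: we assume $w_{12} \le w_{34}$ and show that $\dD(1,2) \le d^{D'}(1,2)$. By the first branch of Proposition \ref{prop:ikm}, the hypothesis $w_{12} \le w_{34}$ gives $\dD(1,2) = w_{12}$. It therefore suffices to prove $w_{12} \le \lambda = d^{D'}(1,2)$.

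Since $\lambda \ge \sigma$ in both cases of the definition of $\lambda$, it would be enough to have $w_{12} \le \sigma = w_{14}+w_{23}$. That need not hold, however, since $w_{12}$ can be as large as $w_{34}$. We therefore split according to the two cases of \eqref{eq:4path}.

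\emph{Case $w_{34} > \sqrt{w_{14}w_{23}}$.} Here $\lambda = \Lambda$, and
\[
\Lambda = \frac{\sqrt{w_{14}^2+w_{34}^2}\,\sqrt{w_{23}^2+w_{34}^2}}{w_{34}} \ge \frac{w_{34}\cdot w_{34}}{w_{34}} = w_{34} \ge w_{12},
\]
because each square root is at least $w_{34}$.

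\emph{Case $w_{34} \le \sqrt{w_{14}w_{23}}$.} Here $\lambda = \sigma = w_{14}+w_{23}$. By AM--GM,
\[
w_{14}+w_{23} \ge 2\sqrt{w_{14}w_{23}} \ge 2w_{34} \ge w_{34} \ge w_{12}.
\]

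In both cases $\dD(1,2) = w_{12} \le d^{D'}(1,2)$, which is the contrapositive of the lemma.

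The only real dependency is the first branch of the corrected Proposition \ref{prop:ikm}, that is, $\dD(1,2) = w_{12}$ when $w_{12}\le w_{34}$; the rest is elementary. If one prefers not to rely on that branch, Proposition \ref{prop:geod} already gives $\dD(1,2) \le \ell^D(1,2) \le w_{12}$, since the edge $\{1,2\}$ is itself a path from $1$ to $2$. This upper bound is all the argument uses, so the proof is self-contained either way.
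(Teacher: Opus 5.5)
Your proof is correct and follows the paper's argument essentially step for step. You argue by contraposition, get $\dD(1,2) = w_{12} \le w_{34}$ from the first branch of Proposition~\ref{prop:ikm}, and then make the same two-case comparison against $\sigma$ (via AM--GM) and against $\Lambda \ge w_{34}$. Your closing remark is a nice touch: Proposition~\ref{prop:geod} already gives $\dD(1,2) \le \ell^D(1,2) \le w_{12}$, which makes the lemma independent of the corrected four-cycle formula.
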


\begin{proof}
 Assume $w_{12} \le w_{34}$.  By Proposition \ref{prop:ikm}, $\dD(1,2) = w_{12} \le w_{34}$.  Based on the formula for $d^{D'}(1,2) = \lambda$, we divide into cases based on whether $w_{34}$ is greater than or less than or equal to $\sqrt{w_{14}w_{23}}$.  If $w_{34} \le \sqrt{w_{14}w_{23}}$, then
 \[ \dD(1,2) \le \sqrt{w_{14}w_{23}} \le \frac{w_{14} + w_{23}}{2} < \sigma = \lambda. \]
 On the other hand, if $w_{34} > \sqrt{w_{14}w_{23}}$, then $\sqrt{w_{14}^2 + w_{34}^2} \sqrt{w_{23}^2 + w_{34}^2} \ge w_{34}^2$ implies
 \[ \dD(1,2) \le w_{34} \le \Lambda = \lambda.\]  The result follows.
\end{proof}

Proposition \ref{prop:ikm} and Lemma \ref{lem:notbranch1} show that the value of $\dD(1,2)$ depends on whether or not $C$ is positive.  The following lemma is useful for working with $C$.

\begin{lemma}
 \label{lem:C}
 Assume $w_{12} > w_{34}$.  Using the notation above, $C \le 0$ if and only if $w_{34} > \sqrt{w_{14}w_{23}}$ and
 \[ w_{12} \ge w_{34} + \frac{\sigma^2 w_{34}}{w_{34}^2 - w_{14}w_{23}}.\]
 Moreover, if $C \le 0$, then $w_{34} > \sqrt{w_{14}w_{23}}$ and $w_{12}w_{34} > w_{14}w_{23}$.
\end{lemma}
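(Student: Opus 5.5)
We analyze the sign of each factor of $C$ under $w_{12} > w_{34}$. Write $C = C_1 C_2$ with
\[ C_1 = \sigma^2 w_{34} + (w_{12} - w_{34})(w_{14}w_{23} - w_{34}^2), \qquad C_2 = (w_{14} - w_{23})^2 w_{34} + (w_{12} + w_{34})(w_{14}w_{23} + w_{34}^2).\]
Since all weights are positive, every term of $C_2$ is nonnegative. The term $(w_{12}+w_{34})(w_{14}w_{23}+w_{34}^2)$ is strictly positive, so $C_2 > 0$. Hence $C \le 0$ if and only if $C_1 \le 0$, and the whole question reduces to the single factor $C_1$.

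Next we study $C_1$. Put $\delta = w_{12} - w_{34} > 0$ and $\tau = w_{34}^2 - w_{14}w_{23}$, so that $C_1 = \sigma^2 w_{34} - \delta\tau$.

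If $\tau \le 0$, i.e.\ $w_{34} \le \sqrt{w_{14}w_{23}}$, then $-\delta\tau \ge 0$ and $\sigma^2 w_{34} > 0$. Thus $C_1 > 0$, so $C > 0$.

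If $\tau > 0$, divide by $\tau$: $C_1 \le 0$ holds if and only if $\delta \ge \sigma^2 w_{34}/\tau$. This is exactly the stated inequality
\[ w_{12} \ge w_{34} + \frac{\sigma^2 w_{34}}{w_{34}^2 - w_{14}w_{23}}.\]
Combining the two cases gives the equivalence.

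For the ``moreover'' part, suppose $C \le 0$. By the equivalence, $w_{34} > \sqrt{w_{14}w_{23}}$ and the inequality above holds. The added fraction is positive, so $w_{12} > w_{34}$. Therefore
\[ w_{12}w_{34} > w_{34}^2 > w_{14}w_{23}.\]

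The only point requiring care is the strict positivity of $C_2$, together with the sign split on $\tau$. The case $\tau = 0$ falls under $\tau \le 0$ and gives $C_1 = \sigma^2 w_{34} > 0$. No step is a genuine obstacle; the argument is a direct sign analysis.
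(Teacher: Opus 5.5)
Your proof is correct and follows the paper's argument essentially step for step. Like the paper, you discard the always-positive second factor of $C$, split on the sign of $w_{34}^2 - w_{14}w_{23}$ to get the equivalence, and read off $w_{14}w_{23} < w_{34}^2 < w_{12}w_{34}$ for the final claim. Your remark that the positive fraction forces $w_{12} > w_{34}$ is harmless but redundant, since that is already a hypothesis.
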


\begin{proof}
 Since $(w_{14} - w_{23})^2w_{34} +(w_{12} + w_{34})(w_{14}w_{23} + w_{34}^2)$ is always positive for positive $w_{ij}$, we have that $C \le 0$ if and only if $(w_{14} + w_{23})^2w_{34} +(w_{12} - w_{34})(w_{14}w_{23} - w_{34}^2) \le 0.$  If $w_{34}^2 \le w_{14}w_{23}$, then the first factor of $C$ is the sum of a nonnegative term and a positive term, and hence $C > 0$.  When $w_{34}^2 > w_{14}w_{23}$, we rearrange the first inequality; this proves the first claim.

 Next, since $w_{12} > w_{34}$, $w_{34} \le \sqrt{w_{14}w_{23}}$ implies $C > 0$, so, if $C \le 0$, then $w_{34} > \sqrt{w_{14}w_{23}}$.  Finally, if $C \le 0$ and $w_{12} > w_{34}$, we have
 \[ w_{14}w_{23} < w_{34}^2 < w_{12}w_{34}.\]
\end{proof}

We are now able to prove one of our two main results of this section.

\begin{theorem}
 \label{thm:alwaysaw12}
 Using the notation above, for any choice of $w_{14}, w_{23}, w_{34}$, $\dD(1,2) > d^{D'}(1,2)$ for all $w_{12}$ sufficiently large.  In particular, if $w_{34} > \sqrt{w_{14}w_{23}}$, then $\dD(1,2) > d^{D'}(1,2)$ whenever $w_{12} \ge w_{34} + \sigma^2 w_{34}/(w_{34}^2 - w_{14}w_{23})$, and if $w_{34} \le \sqrt{w_{14}w_{23}}$, then $\dD(1,2) > d^{D'}(1,2)$ whenever $w_{12} > (\sigma^2 + w_{34}^2)/(2w_{34}).$
\end{theorem}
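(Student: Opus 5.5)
The plan is to split according to the two branches of $\lambda = d^{D'}(1,2)$, that is, according to whether $w_{34} > \sqrt{w_{14}w_{23}}$. In each case I will use Lemma \ref{lem:C} to decide which branch of Proposition \ref{prop:ikm} computes $\dD(1,2)$, and then compare it with $\lambda$ directly. The first assertion (that $\dD(1,2) > d^{D'}(1,2)$ for all sufficiently large $w_{12}$) follows at once, because in each case the stated threshold on $w_{12}$ is a finite number depending only on $w_{14}, w_{23}, w_{34}$.

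\emph{Case $w_{34} > \sqrt{w_{14}w_{23}}$.} Here $\lambda = \Lambda$. Suppose $w_{12} \ge w_{34} + \sigma^2 w_{34}/(w_{34}^2 - w_{14}w_{23})$. In particular $w_{12} > w_{34}$, so Lemma \ref{lem:C} gives $C \le 0$ and also $w_{12}w_{34} > w_{14}w_{23}$. The second branch of Proposition \ref{prop:ikm} then reads
\[ \dD(1,2) = \frac{w_{12}\sqrt{w_{14}^2 + w_{34}^2}\sqrt{w_{23}^2 + w_{34}^2}}{w_{12}w_{34} - w_{14}w_{23}} = \Lambda \cdot \frac{w_{12}w_{34}}{w_{12}w_{34} - w_{14}w_{23}}. \]
The denominator is positive and strictly smaller than $w_{12}w_{34}$, because $w_{14}w_{23} > 0$. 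Hence $\dD(1,2) > \Lambda = \lambda$.

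\emph{Case $w_{34} \le \sqrt{w_{14}w_{23}}$.} Here $\lambda = \sigma$. Suppose $w_{12} > (\sigma^2 + w_{34}^2)/(2w_{34})$. First I check that this forces $w_{12} > w_{34}$: by AM--GM, $\sigma^2 \ge 4w_{14}w_{23} \ge 4w_{34}^2$, so the threshold is at least $5w_{34}/2$. Lemma \ref{lem:C} then says $C > 0$, since $C \le 0$ would require $w_{34} > \sqrt{w_{14}w_{23}}$. So the third branch of Proposition \ref{prop:ikm} applies, and $\dD(1,2) = w_{12}\sigma/\sqrt{\sigma^2 + (w_{12} - w_{34})^2}$. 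Squaring, the inequality $\dD(1,2) > \sigma$ is equivalent to $w_{12}^2 > \sigma^2 + (w_{12}-w_{34})^2$, which simplifies to $2w_{12}w_{34} > \sigma^2 + w_{34}^2$. This is exactly the hypothesis.

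I expect no real obstacle. The only delicate point is confirming that the hypotheses place us in the correct branch of Proposition \ref{prop:ikm}: namely that $w_{12} > w_{34}$ holds in both cases, and that the sign of $C$ is what Lemma \ref{lem:C} predicts. Once the branch is fixed, each comparison is a one-line algebraic manipulation.
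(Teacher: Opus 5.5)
Your proof is correct and follows essentially the same route as the paper. You use the same split on $w_{34}$ versus $\sqrt{w_{14}w_{23}}$, invoke Lemma~\ref{lem:C} to fix the branch of Proposition~\ref{prop:ikm}, rewrite $\dD(1,2) = \Lambda\, w_{12}w_{34}/(w_{12}w_{34} - w_{14}w_{23})$ in the first case, and use the AM--GM bound $\sigma^2 \ge 4w_{34}^2$ with a squaring comparison in the second (your direct reduction to $2w_{12}w_{34} > \sigma^2 + w_{34}^2$ is just a cleaner presentation of the paper's inequality).
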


\begin{proof}
 Assume first $w_{34} > \sqrt{w_{14}w_{23}}$ and $w_{12} \ge w_{34} + \sigma^2 w_{34}/(w_{34}^2 - w_{14}w_{23})$.  This implies $d^{D'}(1,2) = \lambda = \Lambda$ and so
 \begin{align*}
  \dD(1,2) &= \dfrac{w_{12}\sqrt{w_{14}^2 + w_{34}^2}\sqrt{w_{23}^2 + w_{34}^2}}{|w_{14}w_{23} - w_{12}w_{34}|} && \text{Proposition \ref{prop:ikm} and Lemma \ref{lem:C}}\\
           &= \dfrac{w_{12}w_{34}\Lambda}{|w_{14}w_{23} - w_{12}w_{34}|} && \\
           &= \dfrac{w_{12}w_{34}\Lambda}{w_{12}w_{34} - w_{14}w_{23}} && \text{Lemma \ref{lem:C}}\\
           &= \dfrac{\Lambda}{1 - \frac{w_{14}w_{23}}{w_{12}w_{34}}} && \\
           &> \Lambda && \text{Lemma \ref{lem:C}}\\
           &= d^{D'}(1,2).
 \end{align*}

Next, assume $w_{34} \le \sqrt{w_{14}w_{23}}$ and $w_{12} > (\sigma^2 + w_{34}^2)/(2w_{34}).$  Since $\sigma^2 \ge 4w_{14}w_{23} \ge 4w_{34}^2$, we have
\[w_{12} > \frac{\sigma^2 + w_{34}^2}{2w_{34}} \ge \frac{5w_{34}}{2} > w_{34}.\]  Further, since $w_{34} \le \sqrt{w_{14}w_{23}}$, we have $d^{D'}(1,2) = \lambda = \sigma$ and, by Lemma \ref{lem:C}, $C > 0$, so
\[ \dD(1,2) = \dfrac{w_{12}(w_{14} + w_{23})}{\sqrt{(w_{14} + w_{23})^2 + (w_{12} - w_{34})^2}}.\]  Since $w_{12} > (\sigma^2 + w_{34}^2)/(2w_{34})$ and $\lambda = \sigma$, we have
\[ (\sigma^2 - \lambda^2)w_{12}^2 + 2\lambda^2w_{34}w_{12} - \lambda^2(\sigma^2 + w_{34}^2) = 2\lambda^2w_{34}w_{12} - \lambda^2(\sigma^2 + w_{34}^2) > 0,\]
which is equivalent to
\[ \dD(1,2) = \dfrac{w_{12}(w_{14} + w_{23})}{\sqrt{(w_{14} + w_{23})^2 + (w_{12} - w_{34})^2}} = \dfrac{w_{12}\sigma}{\sqrt{\sigma^2 + (w_{12} - w_{34})^2}} > \lambda = d^{D'}(1,2).\]
The result follows.
\end{proof}

Our next goal is to determine how much the noncommutative distance between $1$ and $2$ can decrease after deletion of the edge $\{1,2\}$.  Of course, we could always scale the weights of the edges, so what we are really asking for is an upper bound for $\dD(1,2)/d^{D'}(1,2)$.  With this in mind, we first provide an upper bound for $\dD(1,2)$.

\begin{proposition}
 \label{prop:dD12bound}
 Using the notation above, for fixed $w_{14}, w_{23}, w_{34}$, $\dD(1,2) \le \sqrt{\sigma^2 + w_{34}^2}$, and this upper bound is attained when $w_{12} = (\sigma^2 + w_{34}^2)/w_{34}$.
\end{proposition}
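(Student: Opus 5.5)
Fix $w_{14}, w_{23}, w_{34}$ and treat $\dD(1,2)$ as a function of $w_{12}$. We go through the three branches of Proposition \ref{prop:ikm} and bound each by $\sqrt{\sigma^2 + w_{34}^2}$.

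\emph{First branch} ($w_{12}\le w_{34}$). Here $\dD(1,2)=w_{12}\le w_{34}<\sqrt{\sigma^2+w_{34}^2}$, so there is nothing to do.

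\emph{Third branch.} Write $u = w_{12}-w_{34}$, so that $\dD(1,2) = (u+w_{34})\sigma/\sqrt{\sigma^2+u^2}$. By Cauchy--Schwarz,
\[ (u+w_{34})\sigma = u\cdot\sigma + \sigma\cdot w_{34} \le \sqrt{u^2+\sigma^2}\,\sqrt{\sigma^2+w_{34}^2}, \]
which gives the bound. Equality holds exactly when $(u,\sigma)$ is proportional to $(\sigma,w_{34})$, that is, $u=\sigma^2/w_{34}$. This is precisely $w_{12}=w_{34}+\sigma^2/w_{34}=(\sigma^2+w_{34}^2)/w_{34}$.

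\emph{Attainment.} We must check that at this value of $w_{12}$ the third branch really applies, i.e.\ $C>0$. If $w_{34}\le\sqrt{w_{14}w_{23}}$, this is automatic by Lemma \ref{lem:C}. Otherwise Lemma \ref{lem:C} says $C\le 0$ iff $w_{12}-w_{34}\ge \sigma^2 w_{34}/(w_{34}^2-w_{14}w_{23})$. Since $w_{34}/(w_{34}^2-w_{14}w_{23}) > 1/w_{34}$, the threshold exceeds $\sigma^2/w_{34}$, so $C>0$ at our point and the value $\sqrt{\sigma^2+w_{34}^2}$ is attained.

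\emph{Second branch} ($w_{12}>w_{34}$, $C\le0$). This is the main obstacle, because the formula there is a different expression. By Lemma \ref{lem:C}, in this branch $w_{34}^2>w_{14}w_{23}$ and $w_{12}w_{34}>w_{14}w_{23}$, so
\[ \dD(1,2)=\frac{\Lambda}{1-\frac{w_{14}w_{23}}{w_{12}w_{34}}}, \]
which is decreasing in $w_{12}$. It therefore suffices to check the bound at the smallest admissible value $w_{12}=w_{34}+\sigma^2w_{34}/(w_{34}^2-w_{14}w_{23})$, where $C=0$.

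At $C=0$, I expect the second and third branch formulas to coincide, by continuity of $\dD$ in the weights. If so, the Cauchy--Schwarz bound from the third branch transfers directly. If I want to avoid the continuity argument, I would instead verify the inequality by hand: substitute $p=w_{14}w_{23}$, $s=\sigma$, $t=w_{34}$ and use $(w_{14}^2+t^2)(w_{23}^2+t^2)=(p-t^2)^2+s^2t^2$. At the threshold this gives $w_{12}t-p=(t^2-p)(\cdots)$ plus explicit terms. Squaring then reduces the claim to a polynomial inequality, which I expect to factor as a perfect square times positive terms; that algebraic check is the step I anticipate being the messiest.
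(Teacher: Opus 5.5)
Your first branch, your Cauchy--Schwarz bound on the third branch, and your check via Lemma~\ref{lem:C} that $C>0$ at $w_{12}=(\sigma^2+w_{34}^2)/w_{34}$ all agree with the paper. The Cauchy--Schwarz step is the same Lagrange identity as \eqref{eq:supd12}, so the equality case comes out identically. The second branch is where you diverge, and it is the one step that is not yet a proof.

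After the same monotonicity reduction to the threshold $W_0=w_{34}+\sigma^2w_{34}/(w_{34}^2-w_{14}w_{23})$, the paper checks directly that the quadratic $(2w_{34}^2-w_{14}w_{23})w_{12}^2-2w_{34}(\sigma^2+w_{34}^2)w_{12}+w_{14}w_{23}(\sigma^2+w_{34}^2)$ is positive at $W_0$, because it equals an explicitly positive expression. That inequality rearranges to $\dD(1,2)^2<\sigma^2+w_{34}^2$. You instead say you \emph{expect} the two branch formulas to agree at $C=0$, so that Cauchy--Schwarz transfers. Continuity of $w_{12}\mapsto\dD(1,2)$ would give this, and it is true: normalize $a_2=0$, note that the fixed edges $\{2,3\},\{3,4\},\{4,1\}$ confine admissible diagonals to a compact box independent of $w_{12}$, and then run a routine compactness argument. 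However, neither you nor the paper proves continuity, so you would have to supply it.

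The simpler fix is to verify the coincidence directly. Set $p=w_{14}w_{23}$, $t=w_{34}$ and $K=(t^2-p)^2+\sigma^2t^2=(w_{14}^2+t^2)(w_{23}^2+t^2)$. Then
\[ W_0t-p=\frac{K}{t^2-p},\qquad \sigma^2+(W_0-t)^2=\frac{\sigma^2K}{(t^2-p)^2}, \]
so the second- and third-branch formulas both give $\dD(1,2)^2=W_0^2(t^2-p)^2/K$ at $W_0$.

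Your Cauchy--Schwarz bound therefore applies at $W_0$. It is strict there, because $W_0-t=\sigma^2t/(t^2-p)>\sigma^2/t$ when $p>0$; explicitly, $\sigma^2+t^2-\dD(1,2)^2=\sigma^2p^2/K$ at $W_0$, which also confirms the factorization you anticipated in your fallback. Monotonicity in $w_{12}$ then covers the whole second branch.

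With these two lines added, your proof is complete. Compared with the paper's, it replaces the explicit quadratic computation by the observation that the two branches glue together at $C=0$, so one Cauchy--Schwarz inequality controls both the second and third branches.
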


\begin{proof}
 We prove first that each piecewise branch of the formula for $\dD(1,2)$ from Proposition \ref{prop:ikm} is bounded above by $\sqrt{\sigma^2 + w_{34}^2}$.

 Assume first that $w_{12} \le w_{34}$.  Then,
 \[ \dD(1,2) = w_{12} \le w_{34} < \sqrt{\sigma^2 + w_{34}^2}.\]

 Next, assume $w_{12} > w_{34}$ and $C \le 0$.  As in the proof of Theorem \ref{thm:alwaysaw12}, we see that
 \[ \dD(1,2) = \dfrac{\Lambda}{1 - \frac{w_{14}w_{23}}{w_{12}w_{34}}},\]
 and this function is decreasing in $w_{12}$, and, by Lemma \ref{lem:C}, the minimum value of $w_{12}$ is
 \[ w_{12} = w_{34} + \frac{\sigma^2 w_{34}}{w_{34}^2 - w_{14}w_{23}}.\]
 Using this value for $w_{12}$ and noting that
 \[ \dfrac{w_{14}w_{23}(w_{14} + w_{23})^2(w_{14}^2 + w_{34}^2)(w_{23}^2 + w_{34}^2)}{(w_{34}^2 - w_{14}w_{23})^2} > 0,\]
 after rearrangement we have
 \[ (2w_{34}^2 - w_{14}w_{23})w_{12}^2 - 2w_{34}((w_{14} + w_{23})^2 + w_{34}^2)w_{12} + w_{14}w_{23}((w_{14} + w_{23})^2 + w_{34}^2) > 0.\]
 After multiplying through by $w_{14}w_{23}$ and rearranging, we have
 \[ \dD(1,2)^2 = \dfrac{w_{12}^2(w_{14}^2 + w_{34}^2)(w_{23}^2 + w_{34}^2)}{(w_{12}w_{34} - w_{14}w_{23})^2} < (w_{14} + w_{23})^2 + w_{34}^2 = \left(\sqrt{\sigma^2 + w_{34}^2}\right)^2,\] which shows $\dD(1,2) \le \sqrt{\sigma^2 + w_{34}^2}$ for this branch as well.

 For the final branch, we assume $w_{12} > w_{34}$ and $C > 0$.  In this case, we note that
 \begin{equation}
 \label{eq:supd12}
  (\sigma^2 + w_{34}^2)(\sigma^2 + (w_{12} - w_{34})^2) - w_{12}^2\sigma^2 = (\sigma^2 + w_{34}^2 - w_{12}w_{34})^2 \ge 0
 \end{equation}
 (with equality precisely when $w_{12} = (\sigma^2 + w_{34}^2)/w_{34}$), and this inequality rearranges to give us
 \[ \dD(1,2)^2 = \dfrac{w_{12}^2\sigma^2}{\sigma^2 + (w_{12} - w_{34})^2} \le \sigma^2 + w_{34}^2 = \left(\sqrt{\sigma^2 + w_{34}^2}\right)^2,\]
 which shows that $\dD(1,2) \le \sqrt{\sigma^2 + w_{34}^2}$ in any case.

 Finally, assume $w_{12} = (\sigma^2 + w_{34}^2)/w_{34}$.  Note that $w_{12} = (\sigma^2 + w_{34}^2)/w_{34} > w_{34}$.  If $w_{34} \le \sqrt{w_{14}w_{23}}$, then $C > 0$ by Lemma \ref{lem:C}.  Otherwise, $w_{34}^2 > w_{14}w_{23}$, and, since
 \[ \dfrac{\sigma^2 + w_{34}^2}{w_{34}} - w_{34} - \dfrac{\sigma^2w_{34}}{w_{34}^2 - w_{14}w_{23}} = \dfrac{-w_{14}w_{23}\sigma^2}{w_{34}(w_{34}^2 - w_{14}w_{23})} < 0,\]
 by Lemma \ref{lem:C} we again have $C > 0$. In either case, by Proposition \ref{prop:ikm}
 \[ \dD(1,2) = \dfrac{w_{12}\sigma}{\sqrt{\sigma^2 + (w_{12} - w_{34})^2}}.\]  By \eqref{eq:supd12} above, this shows $\dD(1,2) = \sqrt{\sigma^2 + w_{34}^2}$ when $w_{12} = (\sigma^2 + w_{34}^2)/w_{34}$, as desired.
\end{proof}

We are now able to prove that an upper bound for $\dD(1,2)/d^{D'}(1,2)$ exists and can be attained.

\begin{proof}[Proof of Theorem \ref{thm:sharp}]
 We once more use the notation introduced above.  By Proposition \ref{prop:dD12bound},
 \[ \dfrac{\dD(1,2)}{d^{D'}(1,2)} \le \dfrac{\sqrt{\sigma^2 + w_{34}^2}}{\lambda}.\]
 Since $\sqrt{\sigma^2 + w_{34}^2}/\lambda$ is invariant under simultaneous scaling of all edge weights, we set $w_{34} = 1$ and define $\rho:= w_{14}w_{23}$.  Note that by Lemma \ref{lem:notbranch1}, $w_{12} > 1$, and, by \eqref{eq:4path}, we have two cases: $\rho \ge 1$ and $\rho < 1$.  Set $R \colonequals \sqrt{\sigma^2 + 1}/\lambda$.

 Assume first that $\rho \ge 1$.  By \eqref{eq:4path}, $\lambda = \sigma$, and
 \[ R = \dfrac{\sqrt{\sigma^2 + 1}}{\sigma} = \sqrt{1 + \dfrac{1}{\sigma^2}}.\]
 Since $R$ is decreasing in $\sigma$ and
 \[\sigma = w_{14} + w_{23} \ge 2\sqrt{w_{14}w_{23}} = 2\sqrt{\rho} \ge 2,\]
 we have
 \[ R \le \dfrac{2^2 + 1}{2} = \dfrac{\sqrt{5}}{2} < \dfrac{2}{\sqrt{3}}.\]

 Now assume $\rho < 1$.  By \eqref{eq:4path}, we have
 \[ \lambda = \Lambda = \sqrt{(w_{14}^2 + 1)(w_{23}^2 + 1)} = \sqrt{(1 - \rho)^2 + \sigma^2},\]
 and so $R = \dfrac{\sqrt{\sigma^2 + 1}}{\sqrt{(1 - \rho)^2 + \sigma^2}}$; note that for a fixed $\sigma$ that $R$ increases with $\rho$.  Since
 \[ \rho = w_{14}w_{23} \le \dfrac{(w_{14} + w_{23})^2}{4} = \dfrac{\sigma^2}{4},\] we may set $\rho = \min(1, \sigma^2/4)$.  When $\sigma \ge 2$, as $\rho \to 1^-$ we have $R \to \sqrt{\sigma^2 + 1}/\sigma \le \sqrt{5}/2 < 2/\sqrt{3}$ by the work above.  When $\sigma < 2$, we have $\rho = \sigma^2/4$, which occurs when $w_{14} = w_{23} = \sigma/2$.  Thus,
 \[ R^2 = \dfrac{\sigma^2 + 1}{(1 - \sigma^2/4)^2 + \sigma^2} = \dfrac{\sigma^2 + 1}{(1 + \sigma^2/4)^2}.\]
 As a function of $\sigma^2$, the maximum occurs when $\sigma^2 = 2$, and so $\sigma = \sqrt{2}$, $\rho = \sigma^2/4 = 1/2$, and $R = 2/\sqrt{3}$.  This corresponds to $w_{34} = 1$, $w_{14} = w_{23} = 1/\sqrt{2}$, and (by Proposition \ref{prop:dD12bound}) $w_{12} = 3$; indeed, $\dD(1,2) = \sqrt{3}$ and $d^{D'}(1,2) = 3/2$.
\end{proof}


\section{A sharp upper bound for $\sup \frac{\dD(x,y)}{d^{D'}(x,y)}$}
\label{sect:sharp}

As it turns out, the constant of $2/\sqrt{3}$ from Theorem \ref{thm:sharp} is the largest the ratio of $\dD(x,y)/d^{D'}(x,y)$ can be; this section is devoted to the proof of this result.  We begin with a technical lemma that will be useful.

\begin{lemma}
 \label{lem:ineq}
 Let $\delta, P, Q \in \R$ with $P,Q \ge 0$.  Then,
 \[ \sqrt{\delta^2 + P^2} + \sqrt{\delta^2 + Q^2} \le \frac{2}{\sqrt{3}}\sqrt{4\delta^2 + (P+Q)^2}.\]
\end{lemma}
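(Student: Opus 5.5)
The plan is to reduce the inequality to a polynomial inequality by squaring twice, and then to check that polynomial's sign by a discriminant computation. Replacing $\delta$ by $|\delta|$ changes nothing, so we may take $\delta \ge 0$. Both sides are nonnegative, so the inequality is equivalent to the one between their squares. The left side squares to $2\delta^2 + P^2 + Q^2 + 2\sqrt{(\delta^2+P^2)(\delta^2+Q^2)}$, and the right side squares to $\tfrac{4}{3}\bigl(4\delta^2 + (P+Q)^2\bigr)$. After isolating the radical and multiplying by $3$, the claim becomes
\[ 6\sqrt{(\delta^2+P^2)(\delta^2+Q^2)} \le 10\delta^2 + (P^2+Q^2) + 8PQ. \]
Since $P,Q \ge 0$, the right-hand side is nonnegative. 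Squaring again is therefore an equivalence.

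To organize the computation, I would introduce the symmetric quantities $S = P^2 + Q^2$, $T = PQ$ and $u = \delta^2$. The only constraints are $u \ge 0$ and $S \ge 2T \ge 0$. The left side squared is $36\bigl(u^2 + uS + T^2\bigr)$. Expanding $(10u + S + 8T)^2$ and subtracting, the inequality reduces to nonnegativity of the quadratic
\[ \Phi(u) = 64u^2 + (160T - 16S)\,u + \bigl(S^2 + 16ST + 28T^2\bigr). \]

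The key step is to show $\Phi(u) \ge 0$ for all real $u$, not just $u \ge 0$. Since the leading coefficient is positive, it suffices to show the discriminant is nonpositive. The discriminant equals
\[ (160T-16S)^2 - 256(S^2+16ST+28T^2) = 256\bigl[(10T-S)^2 - S^2 - 16ST - 28T^2\bigr] = 256\cdot 36\, T(2T - S). \]
This is $\le 0$ because $T \ge 0$ and $S \ge 2T$, the latter being $(P-Q)^2 \ge 0$. This completes the argument.

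The only real obstacle is the bookkeeping: making sure each squaring is reversible, which needs the nonnegativity of $10\delta^2 + S + 8T$, and getting the expansion right. Passing to $S$ and $T$ keeps this manageable.

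As a sanity check on sharpness, the discriminant vanishes exactly when $T = 0$ or $P = Q$. In particular, $\delta = 0$ with $Q = 0$ gives $P \le \tfrac{2}{\sqrt3}P$, which is not tight. Tightness should instead come from the boundary $u \ge 0$ together with $P = Q$, which I would double-check against the example of Theorem \ref{thm:sharp}.
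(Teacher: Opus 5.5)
Your proof is correct, and it takes a different route from the paper. Both squarings are legitimate; the second one is reversible because $10\delta^2+S+8T\ge 0$. Your quadratic $\Phi(u)=64u^2+(160T-16S)u+S^2+16ST+28T^2$ is right, and its discriminant is $256\cdot 36\,T(2T-S)=-9216\,PQ(P-Q)^2\le 0$.

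The paper squares only once. It assumes $P\le Q$ and bounds the cross term by the weighted AM--GM inequality $2\sqrt{(\delta^2+P^2)(\delta^2+Q^2)}\le 3(\delta^2+P^2)+\tfrac13(\delta^2+Q^2)$. After that, the squared right side exceeds the squared left side by at least $\tfrac83P(Q-P)\ge 0$. That argument is shorter, but it needs the right weight $3$ (chosen so the AM--GM step is tight at the extremal configuration) and breaks the symmetry in $P,Q$. Yours is symmetric and mechanical. The factored discriminant also shows at once where $P,Q\ge 0$ is used, namely $PQ\ge 0$, and where equality can occur.

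Your closing remark about sharpness is wrong, though this does not affect the lemma, which claims no sharpness. When $P=Q>0$ you get $\Phi(u)=64(u+P^2)^2$, whose double root is negative, so the inequality is strict for every real $\delta$. When $T=0$, say $P=0$, you get $\Phi(u)=(8u-S)^2$, which vanishes at $u=Q^2/8\ge 0$. So equality holds exactly when one of $P,Q$ is $0$ and $8\delta^2=P^2+Q^2$; for example, $\delta=1$, $P=2\sqrt2$, $Q=0$ gives $4=4$. This is the configuration of the extremal matrix in the proof of Lemma \ref{lem:skewdeletionbound}, where $P=0$, $Q=\sqrt{2/3}$ and $\delta^2=1/12$.
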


\begin{proof}
 Since both sides of the inequality are symmetric in $P, Q$, we may assume $P \le Q$.  By the arithmetic-geometric mean inequality,
 \[ 2\sqrt{(\delta^2 + P^2)(\delta^2 + Q^2)} = 2\sqrt{3(\delta^2 + P^2) \cdot \frac{\delta^2 + Q^2}{3}} \le 3(\delta^2 + P^2) + \frac{\delta^2 + Q^2}{3},\]
 so
 \begin{align*}
  \left(\sqrt{\delta^2 + P^2} + \sqrt{\delta^2 + Q^2}\right)^2 &= (\delta^2 + P^2) + (\delta^2 + Q^2) +  2\sqrt{(\delta^2 + P^2)(\delta^2 + Q^2)}\\
  &\le 4(\delta^2 + P^2) + \frac{4}{3}(\delta^2 + Q^2)\\
  &= \frac{16}{3}\delta^2 + 4 P^2 + \frac{4}{3}Q^2.
 \end{align*}
On the other hand,
\[ \frac{4}{3}(4\delta^2 + (P+Q)^2) = \frac{16}{3}\delta^2 + \frac{4}{3}P^2 + \frac{8}{3}PQ + \frac{4}{3}Q^2,\]
and so
\[ \frac{4}{3}(4\delta^2 + (P+Q)^2) - (\sqrt{\delta^2 + P^2} + \sqrt{\delta^2 + Q^2})^2 \ge \frac{8}{3}P(Q - P) \ge 0.\]
Taking square roots proves the inequality.
\end{proof}

\begin{lemma}
 \label{lem:skewdeletionbound}
 Let $M = (m_{ij}) \in M_n(\R)$ be skew-symmetric, let $s \neq t$, and let $M'$ be obtained from $M$ by replacing the entries $m_{st}$ and $m_{ts}$ by $0$.  Then,
 \[ \Norm{M'} \le \frac{2}{\sqrt{3}} \Norm{M},\]
 and the constant $2/\sqrt{3}$ is sharp for every $n \ge 4$.
\end{lemma}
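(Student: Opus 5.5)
The plan is to reduce to the $4\times 4$ case by a compression argument, compute both norms there explicitly, and finish with Lemma \ref{lem:ineq}. Write $\mu=\Norm{M'}$. Since $M'$ is real skew-symmetric, Proposition \ref{prop:skew}(i) gives an eigenvalue $\ii\mu$ with eigenvector $a+\ii b$, where $a,b\in\R^n$ are orthonormal, $M'a=-\mu b$ and $M'b=\mu a$. Hence the plane $\mathrm{span}(a,b)$ carries the norm of $M'$.

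Let $W=\mathrm{span}(a,b,e_s,e_t)$, which has dimension at most $4$; pad $W$ with further basis vectors if needed so that $\dim W=4$. Choose an orthonormal basis of $W$ whose first two vectors are $e_s,e_t$, and let $u_3,u_4$ span the complement of $\mathrm{span}(e_s,e_t)$ in $W$. Compress $M$ and $M'$ to $W$ in this basis (the analogue of Lemma \ref{lem:compression} for an arbitrary subspace, proved the same way), calling the results $N$ and $N'$. Then:
\begin{itemize}
 \item[(a)] $\Norm{N}\le\Norm{M}$;
 \item[(b)] $\Norm{N'}=\mu$, since $W$ contains $a$ and $b$;
 \item[(c)] $N$ and $N'$ differ only in the $(1,2)$ and $(2,1)$ entries, because $M-M'$ is supported on $\{e_s,e_t\}$.
\end{itemize}
So it suffices to show $\Norm{N'}\le\frac{2}{\sqrt3}\Norm{N}$ for $4\times4$ skew matrices that differ only in the $(1,2)$ entry.

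For the $4\times 4$ computation, write $N$ in block form $\begin{pmatrix} mJ & B\\ -B^t & cJ\end{pmatrix}$ with $J=\begin{pmatrix}0&1\\-1&0\end{pmatrix}$; then $N'$ is the same matrix with $m=0$. Conjugating by a block-diagonal rotation $R_1\oplus R_2$ commutes with each $J$-block, and I will use it to make $B$ diagonal. Rotations only realize a signed SVD, so $B=\diag(\beta_1,\beta_2)$ with $\beta_1\ge|\beta_2|$. Next use Proposition \ref{prop:skew}(ii):
\[
F=m^2+c^2+\beta_1^2+\beta_2^2,\qquad \Pf=mc+\beta_1\beta_2 \quad(\text{up to the sign convention}),
\]
and $\Norm{N}=s_1=\tfrac12\bigl(\sqrt{F+2|\Pf|}+\sqrt{F-2|\Pf|}\bigr)$. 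The key identity to verify is
\[
F\pm 2\Pf=(m\pm c)^2+(\beta_1\pm\beta_2)^2 .
\]
Setting $P=|\beta_1+\beta_2|$ and $Q=|\beta_1-\beta_2|$ (both nonnegative), this gives
\[
2\Norm{N}=\sqrt{(m+c)^2+P^2}+\sqrt{(m-c)^2+Q^2},\qquad 2\Norm{N'}=\sqrt{c^2+P^2}+\sqrt{c^2+Q^2}.
\]
Minkowski's inequality applied to the vectors $(m+c,P)$ and $(c-m,Q)$ yields $2\Norm{N}\ge\sqrt{4c^2+(P+Q)^2}$. Lemma \ref{lem:ineq} with $\delta=c$ then gives $2\Norm{N'}\le\frac{2}{\sqrt3}\cdot 2\Norm{N}$, which is the claimed bound.

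For sharpness, take $M=[A,D]$ for the extremal $4$-cycle of Theorem \ref{thm:sharp}, with $A$ an optimal admissible matrix there, rescaled so that $\Norm{M}$ and $\Norm{M'}$ realize the ratio $2/\sqrt3$. Alternatively, choose $m,c,\beta_1,\beta_2$ directly so that equality holds in both Minkowski and Lemma \ref{lem:ineq}; this requires $P=Q$ in the lemma's AM--GM step, which forces $\beta_2=0$, and then $\delta^2+Q^2=3(\delta^2+P^2)$. For $n>4$, pad the $4\times4$ example with zero rows and columns. I expect the main obstacle to be the sign bookkeeping in the Pfaffian identity and in the choice of pairing for Minkowski, which must produce the $2c$ (not $2m$) combination. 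The reduction via the invariant plane $\mathrm{span}(a,b)$ is the other delicate point, since it is what makes the compression to $W$ preserve $\Norm{M'}$ exactly.
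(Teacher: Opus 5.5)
Your argument is essentially the paper's proof. It uses the same compression to a $4$-dimensional subspace containing $e_s,e_t$ and a norm-attaining pair for $M'$. It uses the same closed form $2\Norm{N}=\sqrt{F+2\Pf}+\sqrt{F-2\Pf}$, which is the paper's $\Norm{N}=|\eta|+|\zeta|$; your rotation diagonalizing $B$ only normalizes the last two coordinates of $\eta,\zeta$. It then applies Minkowski to reach $\sqrt{4\delta^2+(P+Q)^2}$ and finishes with Lemma \ref{lem:ineq}. Two slips need fixing, neither fatal to the main line.

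First, the sign of the Pfaffian is not a matter of convention. For your normalized $N$ you have $n_{13}=\beta_1$, $n_{24}=\beta_2$ and $n_{14}=n_{23}=0$, so $\Pf(N)=mc-\beta_1\beta_2$. Hence $F\pm2\Pf=(m\pm c)^2+(\beta_1\mp\beta_2)^2$, i.e.\ $2\Norm{N}=\sqrt{(m+c)^2+Q^2}+\sqrt{(m-c)^2+P^2}$. Your formula is genuinely wrong: for $m=c=\beta_1=\beta_2=1$ it gives $\Norm{N}=\sqrt2$, whereas $\Norm{N}=2$. The error is nonetheless harmless. Minkowski applied to $(m+c,Q)$ and $(c-m,P)$ gives the same bound $\sqrt{4c^2+(P+Q)^2}$, and everything downstream is symmetric in $P,Q$.

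Second, your alternative sharpness construction fails. With $\beta_2=0$ you get $P=Q$, and your own bound then gives $\Norm{N'}=\sqrt{c^2+\beta_1^2}\le\Norm{N}$, so the ratio is at most $1$. Take $P\le Q$. Nondegenerate equality in Lemma \ref{lem:ineq} requires the slack $\frac83P(Q-P)$ to vanish and also AM--GM equality, $\delta^2+Q^2=9(\delta^2+P^2)$ (not $3$). Since $P=Q$ is incompatible with AM--GM equality unless everything is zero, this forces $P=0$ and $Q^2=8\delta^2$. Minkowski equality then forces $m=\pm c$. For instance, $m=c=1$ and $\beta_1=-\beta_2=\sqrt2$ give $\Norm{N}=\sqrt3$ and $\Norm{N'}=2$.

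Your first sharpness route does work. Take $A=\diag(\sqrt3,0,1/\sqrt3,2/\sqrt3)$ on the extremal $4$-cycle: it is admissible with $\Norm{[A,D]}=1$ and $a_1-a_2=\sqrt3$. Since $d^{D'}(1,2)=3/2$, you get $\Norm{[A,D']}\ge 2/\sqrt3$, because otherwise a rescaling of $A$ would beat $3/2$. This $[A,D]$ is exactly the paper's explicit sharpness matrix.
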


\begin{proof}
 We begin by proving that we can reduce to the $n = 4$ case with $s = 1$ and $t = 2$.  First, we may assume $n \ge 4$, since adjoining zero rows or columns changes neither $\Norm{M}$ nor $\Norm{M'}$.  Moreover, we can assume $M' \neq 0$.  Let $y,z \in \R^n$ be unit vectors with $y^tM'z = \Norm{M'}$.  If $e_s$ and $e_t$ denote standard basis vectors of $\R^n$, then the subspace spanned by $e_s, e_t, y, z$ has dimension at most $4$, so it is contained in a $4$-dimensional subspace $W$ with an orthonormal basis $f_1 = e_s$, $f_2 = e_t$, $f_3$, $f_4$.  Let $U$ be the $n \times 4$ matrix with columns $f_1, f_2, f_3, f_4$, and define $N \colonequals U^tMU \in M_4(\R)$.  Note that $N$ is skew-symmetric, since
 \[N^t = \left(U^tMU\right)^t = U^tM^tU = -U^tMU = -N.\]  Moreover, since $U$ has orthonormal columns,
 \[ \Norm{N} \le \Norm{U^t}\Norm{M}\Norm{U} = \Norm{M},\] and its $1,2$-entry is $e_s^tMe_t = m_{st}$.  Thus,
 \[ U^tM'U = U^tMU - m_{st}U^t(e_se_t^t - e_te_s^t)U = N',\]
 where $N'$ is obtained from $N$ by replacing the $1,2$- and $2,1$-entries by $0$.  There exist unit vectors $\hat{y}, \hat{z} \in \R^n$ such that $y = U\hat{y}$, $z = U\hat{z}$, so
 \[ \Norm{M'} = y^t M' z = \hat{y}^tN'\hat{z} \le \Norm{N'}.\]  Hence, it suffices to prove $\Norm{N'} \le \frac{2}{\sqrt{3}}\Norm{N}$ for skew-symmetric $N \in M_4(\R)$ with $s = 1$ and $t = 2$.

 Let $N = (n_{ij})$.  Using the notation from Section \ref{sect:prelim}, we have
 \[F = F(N) = n_{12}^2 + n_{13}^2 + n_{14}^2 + n_{23}^2 + n_{24}^2 + n_{34}^2\]
 and
 \[ \Pf = \Pf(N) = n_{12}n_{34} - n_{13}n_{24} + n_{14}n_{23}.\]
 Defining
 \[ \eta \colonequals \frac{1}{2}(n_{12} + n_{34}, n_{13} - n_{24}, n_{14} + n_{23}), \quad \zeta \colonequals \frac{1}{2}(n_{12} - n_{34}, n_{13} + n_{24}, n_{14} - n_{23}) \in \R^3,\]
 we see that $|\eta|^2 + |\zeta|^2 = F/2$ and $|\eta|^2 - |\zeta|^2 = \Pf$, so
 \[ 4|\eta|^2|\zeta|^2 = \left(\frac{F}{2} \right)^2 - \Pf^2\]
 and
 \[ \left(|\eta| + |\zeta| \right)^2 = \frac{F}{2} + \sqrt{\frac{F^2}{4} - \Pf^2} = \frac{F + \sqrt{F^2 - 4\Pf^2}}{2} = s_1^2,\]
 the largest root of $x^2 - Fx + \Pf^2$ (see Proposition \ref{prop:skew}).  Hence,
 \begin{equation}
  \label{eq:normN}
  \Norm{N} = |\eta| + |\zeta|.
 \end{equation}
 Deleting $n_{12}$ replaces $\eta$ by $\eta' = \eta - (n_{12}/2,0,0)$ and $\zeta$ by $\zeta' = \zeta - (n_{12}/2, 0, 0)$; the first coordinates become, respectively, $n_{34}/2$ and $-n_{34}/2$, and the other coordinates do not involve $n_{12}$.  Define
 \[\delta \colonequals \frac{n_{34}}{2}, \quad P \colonequals \Norm{\frac{1}{2}(n_{13} - n_{24}, n_{14} + n_{23})}, \quad Q \colonequals \Norm{\frac{1}{2}(n_{13} + n_{24}, n_{14} - n_{23})}.\]
 Applying \eqref{eq:normN} to $N'$ and $N$, we have
 \begin{align*} \Norm{N'} &= \sqrt{\delta^2 + P^2} + \sqrt{\delta^2 + Q^2},\\
  \Norm{N} &= |(\frac{n_{12}}{2} + \delta, \frac{n_{13} - n_{24}}{2}, \frac{n_{14} + n_{23}}{2})| + |(\frac{n_{12}}{2} - \delta, \frac{n_{13} + n_{24}}{2}, \frac{n_{14} - n_{23}}{2})|\\
  &\ge |(2\delta, P + Q)| = \sqrt{4\delta^2 + (P+Q)^2};\\
 \end{align*}
 that
 \[\Norm{N'} \le \frac{2}{\sqrt{3}}\Norm{N}\]
 now follows from Lemma \ref{lem:ineq}.

 Finally, to get the sharpness of the inequality, take $n_{12} = 1/\sqrt{3}$, $n_{13} = 0$, $n_{14} = \sqrt{2/3}$, $n_{23} = -\sqrt{2/3}$, $n_{24} = 0$, and $n_{34} = -1/\sqrt{3}$.  Then, $\eta = (0,0,0)$ and $\zeta = (1/\sqrt{3}, 0, \sqrt{2/3})$, so $\Norm{N} = 1$ by \eqref{eq:normN}, while $\eta' = (-\frac{1}{2\sqrt{3}}, 0, 0)$ and $\zeta' = (\frac{1}{2\sqrt{3}}, 0, \sqrt{2/3})$, so \[\Norm{N'} = \frac{1}{2\sqrt{3}} + \frac{\sqrt{3}}{2} = \frac{2}{\sqrt{3}}.\]
 We adjoin zero rows and columns when the matrix has dimension larger than $4$.
\end{proof}

\begin{proof}[Proof of Theorem \ref{thm:supbound}]
 First, if $x$ and $y$ lie in different components of the graph after deletion, then $d^{D'}(x,y) = +\infty$, and there is nothing to prove.  Otherwise, let $A$ be admissible for $D$.  Since $[A,D']_{jk} = (a_j - a_k)D'_{jk}$, the matrix $[A,D']$ is obtained from the skew-symmetric matrix $[A,D]$ by replacing its two entries on $e$ by $0$, so, by Lemma \ref{lem:skewdeletionbound}, $\Norm{[A,D']} \le 2/\sqrt{3}$.  Hence, $\frac{\sqrt{3}}{2}A$ is always admissible for $D'$, and $d^{D'}(x,y) \ge \frac{\sqrt{3}}{2}|a_x - a_y|$.  Taking the supremum over $A$ proves the inequality.  Finally, the weighted $4$-cycle of Theorem \ref{thm:sharp} has $\dD(1,2) = \sqrt{3}$ and $d^{D'}(1,2) = 3/2$, showing that the bound is sharp.
\end{proof}


\section{Edge deletion in cycles}
\label{sect:cycles}

The purpose of this section is to study edge deletion in cycles of length longer than $4$.  Necessarily, the situation is more difficult: precise formulas for the noncommutative distance are not known for arbitrary paths, let alone arbitrary cycles; see \cite{CLPS25}.  We nonetheless can still determine for precisely which values of $n$ equation \eqref{eq:question} fails to hold for $n$-cycles.  Surprisingly, \eqref{eq:question} only fails for $n$-cycles when $n$ is divisible by $4$.

\subsection{Results on general skew-symmetric matrices}

We first prove some results that hold for general skew-symmetric matrices.  Let $M \in M_n(\R)$ be skew-symmetric and let $r = \lfloor n/2 \rfloor$.  Recall that the eigenvalues of $M$ are $\pm i s_1$, \dots, $\pm i s_r$, along with $0$ when $n$ is odd, where $s_1 \ge \cdots \ge s_r \ge 0$ are the singular values of $M$.  This implies $\Norm{M} = s_1$.

For a subset $S \subseteq \{1, \dots, n\}$, we denote by $M_S$ the principal submatrix of $M$ with rows and columns indexed by $S$; note that $M_S$ itself is skew-symmetric.  For $0 \le k \le r$, if $e_k$ denotes the $k^{\text{th}}$ elementary symmetric polynomial, we define
\[ \ep_k(M) \colonequals e_k(s_1^2, \dots, s_r^2).\]

The following result is classical and follows from the relation between principal minors and elementary symmetric polynomials (see \cite[Section 1.2]{HJ13}) and the relation $\det A = \Pf(A)^2$.

\begin{lemma}
 \label{lem:pfaffsympoly}
 For every skew-symmetric $M \in M_n(\R)$ and every $0 \le k \le r$, we have
 \[  \ep_k(M) = \sum_{\substack{S \subseteq \{1, \dots, n\},\\ |S| = 2k}} \Pf(M_S)^2.\]
\end{lemma}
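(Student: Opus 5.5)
The plan is to compare two expansions of the characteristic polynomial of $M$. First I compute $\det(xI + M)$ from the spectrum. By Proposition \ref{prop:skew}(i) and the discussion above, the eigenvalues of $M$ are $\pm \ii s_1, \dots, \pm \ii s_r$, together with one extra $0$ when $n$ is odd. Hence
\[ \det(xI + M) = x^{n-2r}\prod_{j=1}^r (x + \ii s_j)(x - \ii s_j) = x^{n-2r}\prod_{j=1}^r (x^2 + s_j^2) = \sum_{k=0}^r \ep_k(M)\, x^{n-2k}. \]

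Next I expand the same determinant by principal minors. The classical identity of \cite[Section 1.2]{HJ13} gives
\[ \det(xI + M) = \sum_{m=0}^n \Bigl(\sum_{|S| = m} \det M_S\Bigr) x^{n-m}. \]
Each $M_S$ is skew-symmetric, so if $|S|$ is odd then $\det M_S = \det(M_S^t) = (-1)^{|S|}\det M_S$, which forces $\det M_S = 0$. If $|S| = 2k$ is even, then $\det M_S = \Pf(M_S)^2$, the standard Pfaffian identity \cite[Chapter 7]{God93}; Proposition \ref{prop:skew}(ii) records the $4\times 4$ case. So only even $m = 2k$ survive, and the coefficient of $x^{n-2k}$ is $\sum_{|S|=2k}\Pf(M_S)^2$.

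Comparing the coefficients of $x^{n-2k}$ in the two expansions proves the lemma for each $0 \le k \le r$. For $k = 0$ both sides equal $1$, with the usual convention that the Pfaffian of the empty matrix is $1$.

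None of these steps is a real obstacle. The only points needing care are the sign convention for $\pm \ii s_j$, which does not matter because only the product $(x+\ii s_j)(x-\ii s_j)$ enters, the correct multiplicity of the eigenvalue $0$, and the fact that singular values of a real skew-symmetric matrix equal the moduli of its eigenvalues. The last holds because $M$ is normal.
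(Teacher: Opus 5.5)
Your proof is correct and follows the paper's argument. The paper simply cites the identity expressing sums of $m\times m$ principal minors as elementary symmetric functions of the eigenvalues \cite[Section 1.2]{HJ13}, together with $\det M_S = \Pf(M_S)^2$. Your comparison of the two expansions of $\det(xI+M)$ writes that identity out: the pairing $(x+\ii s_j)(x-\ii s_j)=x^2+s_j^2$ turns $e_{2k}$ of the eigenvalues into $e_k(s_1^2,\dots,s_r^2)$, and the odd-size minors vanish.
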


For $0 \le k \le r$, define
\[ \Xi_k(M) \colonequals e_k(1-s_1^2, \dots, 1 - s_r^2);\]
in particular, $\Xi_0(M) = 1$.

\begin{lemma}
 \label{lem:normMandXi}
 For every skew-symmetric $M \in M_n(\R)$ and $1 \le k \le r$,
 \[ \Xi_k(M) = \sum_{j = 0}^k (-1)^j {{r - j} \choose {k - j}} \ep_j(M),\]
 and $\Norm{M} \le 1$ if and only if $\Xi_k(M) \ge 0$ for $1 \le k \le r$.
\end{lemma}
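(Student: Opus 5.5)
The plan has two parts: an algebraic identity for $\Xi_k(M)$ in terms of the $\ep_j(M)$, and then the norm criterion.

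\emph{The identity.} Write $x_i = s_i^2$ for $1 \le i \le r$. Consider the generating polynomial
\[ \prod_{i=1}^r \bigl(t + (1 - x_i)\bigr) = \sum_{k=0}^r \Xi_k(M)\, t^{r-k}. \]
Setting $u = t+1$, the left side equals $\prod_{i=1}^r (u - x_i) = \sum_{j=0}^r (-1)^j \ep_j(M)\, u^{r-j}$. Expand $u^{r-j} = (t+1)^{r-j} = \sum_{k} \binom{r-j}{k-j} t^{r-k}$, and compare coefficients of $t^{r-k}$. This gives
\[ \Xi_k(M) = \sum_{j=0}^k (-1)^j \binom{r-j}{k-j} \ep_j(M), \]
with $\ep_0 = 1$. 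This step is routine; Lemma \ref{lem:pfaffsympoly} is only needed later to make the $\ep_j$ computable via Pfaffians, not for this identity.

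\emph{The norm criterion.} Recall that $\Norm{M} = s_1$, so $\Norm{M} \le 1$ if and only if all $y_i := 1 - x_i \ge 0$. The forward direction is immediate: if every $y_i \ge 0$, then every elementary symmetric polynomial $e_k(y_1,\dots,y_r)$ is nonnegative.

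For the converse, assume $\Xi_k = e_k(y) \ge 0$ for all $1 \le k \le r$, and suppose some $y_i < 0$. The $y_i$ are the roots of the real polynomial $p(t) = \prod (t - y_i) = \sum_k (-1)^k e_k(y)\, t^{r-k}$. For $t < 0$, write $t = -\tau$ with $\tau > 0$. Then
\[ (-1)^r p(-\tau) = \sum_k e_k(y)\, \tau^{r-k}. \]
This is a sum of nonnegative terms, and the $k=0$ term $\tau^r$ is strictly positive. Hence $p$ has no negative root, a contradiction. So every $y_i \ge 0$, that is, $s_1 \le 1$.

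The only subtle point is the converse, and it is handled by this Descartes-type sign observation. I also need to check that the vanishing singular values causing no issue: when $n$ is odd, the extra eigenvalue $0$ is not among the $s_i$ by definition, so it plays no role. Nothing else is an obstacle.
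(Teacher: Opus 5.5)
Your proposal is correct and takes essentially the same route as the paper. The paper derives the identity from the generating function $\prod_j (1+z_jt)$ with a factor $(1+t)^r$ pulled out, where you shift $u=t+1$ in a monic polynomial, but this is only cosmetic. The converse uses the identical sign argument showing that $p(t)=\prod_j\bigl(t-(1-s_j^2)\bigr)$ has no negative roots.
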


\begin{proof}
 First, we note that the generating function for the elementary symmetric functions in variables $z_1, z_2, \dots, z_r$ is
 \[ P(t) = \sum_{k = 0}^r e_k(z_1, \dots, z_r) t^k = \prod_{j = 1}^r (1 + z_jt);\]
 see \cite[p. 20]{Mac15}.
 After substituting $z_j = 1 - x_j$, we have
 \[ P(t) = (1 + t)^r \prod_{j=1}^r \left(1 - \frac{t}{1 + t}x_j \right) = \sum_{j = 0}^r (-1)^j e_j(x_1, \dots, x_r)t^j(1+t)^{r-j}.\]
 Expanding $(1+t)^{r-j}$ using the Binomial Theorem, we get
 \[P(t) = \sum_{j = 0}^r (-1)^j e_j(x_1,\dots, x_r)t^j \sum_{m = 0}^{r-j} {{r-j} \choose m} t^m, \]
 and setting $j + m = k$, after comparing coefficients, we get
 \[ e_k(1- x_1, \dots, 1 - x_r) = \sum_{j = 0}^k (-1)^j {{r-j} \choose {k-j}} e_j(x_1, \dots, x_r).\]
 Setting $x_j = s_j^2$ for each $j$ so that $e_j(x_1, \dots, x_r) = \ep_j(M)$, we see that
 \[ \Xi_k(M) = \sum_{j = 0}^k (-1)^j {{r - j} \choose {k - j}} \ep_j(M).\]

 Assume $\Norm{M} \le 1$, i.e., $s_j \le 1$ for each $j$.  Since $1 - s_j^2 \ge 0$ for each $j$, $\Xi_k(M) = e_k(1 - s_1^2, \dots, 1 - s_r^2)$ is a sum of products of nonnegative numbers and hence is itself nonnegative.  Conversely, assume $\Xi_k(M) \ge 0$ for all $k$.  Define
 \[ p(t) = \prod_{j = 1}^r (t - (1 - s_j^2)) = \sum_{k=0}^r (-1)^k \Xi_k(M) t^{r - k},\]
 a monic polynomial with real roots, and assume $t < 0$.  Then, for each $k$, $(-1)^k \Xi_k(M) t^{r - k}$ has sign $(-1)^r$ or vanishes.  Since $p(t)$ is monic, $p$ is not identically $0$, and so $p(t)$ has sign $(-1)^r$, i.e., $p(t) \neq 0$ for all $t < 0$.  Hence, $p(t)$ has no negative roots, i.e., $1 - s_j^2 \ge 0$ for all $j$, and so $\Norm{M} = s_1 \le 1$, as desired.
\end{proof}

The following result is a weighted form of a known result; it follows from the analysis of the weighted matching polynomial studied in \cite{GX12} (unweighted versions were studied independently in \cite{Cav12, HL11}).  We include a full proof for the sake of completeness.

\begin{lemma}
 \label{lem:matchings}
 Let $M$ be supported on a graph $G = (V,E)$ (that is, $m_{ij} = 0$ whenever $\{i,j\} \notin E$), and suppose that no subset of size $2k$ of $V$ induces a subgraph of $G$ with two distinct perfect matchings.  Then
 \[ \ep_k(M) = \sum_{F} \prod_{f \in F} m_f^2,\]
 where the sum is over the sets $F$ of $k$ pairwise disjoint edges of $G$ and $m_f$ denotes the entry on the edge $f$.  In particular, the hypothesis holds when $G$ is a disjoint union of paths and when $G$ is an odd cycle.
\end{lemma}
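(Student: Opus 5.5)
By Lemma \ref{lem:pfaffsympoly}, the plan is to rewrite $\ep_k(M)$ as a sum of squared Pfaffians of principal submatrices of size $2k$ and then evaluate each Pfaffian combinatorially. Fix $S \subseteq V$ with $|S| = 2k$. Since $M$ is supported on $G$, the only perfect matchings of $S$ that contribute to \eqref{eq:Pf} are those using edges of $G$, that is, perfect matchings of the induced subgraph $G[S]$. By hypothesis $G[S]$ has at most one perfect matching. If it has none, then $\Pf(M_S) = 0$. If it has exactly one, say $F_S$, then $\Pf(M_S) = \pm \prod_{f \in F_S} m_f$, so $\Pf(M_S)^2 = \prod_{f\in F_S} m_f^2$; squaring removes the sign ambiguity, and also the ambiguity from orienting each $m_{ij}$, since $m_{ji} = -m_{ij}$.

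Summing over $S$ then gives $\ep_k(M) = \sum_S \prod_{f \in F_S} m_f^2$, where $S$ runs over the $2k$-subsets whose induced subgraph has a perfect matching. Next I use the bijection $F \mapsto \bigcup F$ from sets $F$ of $k$ pairwise disjoint edges of $G$ to such subsets. It is surjective because $F_S$ is such a set with union $S$. It is injective because any $F$ with union $S$ is a perfect matching of $G[S]$, hence equals $F_S$ by uniqueness. This yields the displayed formula.

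It remains to verify the hypothesis in the two special cases.
\begin{itemize}
\item[\textbullet] \emph{Disjoint unions of paths.} Every induced subgraph is again a disjoint union of paths, that is, a forest. A forest has at most one perfect matching: a leaf must be matched to its unique neighbour, and induction on the number of vertices finishes the argument.
\item[\textbullet] \emph{Odd cycles.} If $G$ is an odd cycle $C_n$, then a $2k$-subset $S$ has $2k < n$, so $S$ is a proper subset. Hence $G[S]$ is a disjoint union of paths and the previous case applies.
\end{itemize}

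The main point of care is the sign bookkeeping in the Pfaffian. It needs no actual computation, because only squares appear and there is at most one matching, so no cancellation between terms can occur.
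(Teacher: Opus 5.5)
Your proof is correct and follows the paper's argument: expand $\Pf(M_S)$ over the perfect matchings of $G[S]$, use uniqueness to get $\Pf(M_S)^2=\prod_{f\in F}m_f^2$ or $0$, and sum via Lemma~\ref{lem:pfaffsympoly}. You also make explicit two points the paper leaves implicit: the bijection between sets of $k$ disjoint edges and $2k$-subsets whose induced subgraph has a perfect matching, and the check that the hypothesis holds for disjoint unions of paths (the leaf argument for forests) and for odd cycles (every $2k$-subset is proper).
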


\begin{proof}
 Recall the definition of the Pfaffian \eqref{eq:Pf}.  Now, fix $k$ and let $S \subseteq V$ with $|S| = 2k$.  Since $M$ is supported on $G$, the terms of \eqref{eq:Pf} for $N = M_S$ vanish unless every edge of $F$ is an edge of $G$, i.e., unless $F$ is a perfect matching of the induced subgraph $G[S]$.  If we denote by $\PM(H)$ the set of all perfect matchings of a graph $H$,  \eqref{eq:Pf} becomes
\[ \Pf(M_S) = \sum_{F \in \PM(G[S])} \sgn(F) \prod_{f \in F} m_f,\]
which equals $0$ when $G[S]$ has no perfect matching.  By hypothesis, no subset of $V$ induces more than one perfect matching, so, after squaring, we have
\begin{equation}
 \label{eq:Pf2}
 \Pf(M_S)^2 = \begin{cases}
               \prod_{f \in F} m_f^2, &\PM(G[S]) = \{F\},\\
               0, &\text{ otherwise.}
              \end{cases}
\end{equation}
Combining \eqref{eq:Pf2} with Lemma \ref{lem:pfaffsympoly}, we have
\[ \ep_k(M) = \sum_{\substack{S \subseteq \{1, \dots, n\},\\ |S| = 2k}} \Pf(M_S)^2 = \sum_{F} \prod_{f \in F} m_f^2,\]
 where the last sum is over the sets $F$ of $k$ pairwise disjoint edges of $G$.
\end{proof}

\subsection{Edge deletion in odd cycles.}

We now have enough to prove that equation \eqref{eq:question} holds for edge deletion in odd cycles.

\begin{theorem}
 \label{thm:odd}
 Let $n$ be odd, let $D$ be a Dirac operator on the $n$-cycle, let $e$ be any edge, and let $D'$ be obtained from $D$ by deleting $e$.  Every $A$ admissible for $D$ is also admissible for $D'$, and for any vertices $x$ and $y$,
 \[ d^{D'}(x,y) \ge d^D(x,y).\]
\end{theorem}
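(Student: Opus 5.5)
The plan is to show directly that admissibility is preserved: if $\Norm{[A,D]}\le 1$ then $\Norm{[A,D']}\le 1$. The distance inequality then follows at once, since the supremum defining $d^{D'}(x,y)$ in \eqref{eq:def} is taken over a set containing every $A$ admissible for $D$. Write $M=[A,D]$ and $M'=[A,D']$. Then $M$ is supported on the odd $n$-cycle $G$, while $M'$ is $M$ with the entries on $e$ set to zero, so $M'$ is supported on the path $G-e$. By Lemma \ref{lem:normMandXi}, with $r=\lfloor n/2\rfloor$ the same for both matrices, it suffices to show $\Xi_k(M')\ge 0$ for $1\le k\le r$, given that $\Xi_k(M)\ge 0$ for all such $k$.

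The key computation uses Lemma \ref{lem:matchings}, which applies to both $G$ (an odd cycle) and $G-e$ (a path). It expresses $\ep_j(M)$ and $\ep_j(M')$ as weighted matching sums. The matchings of size $j$ in $G-e$ are exactly the matchings of size $j$ in $G$ avoiding $e$. Writing $e=\{p,q\}$, the matchings of $G$ containing $e$ correspond to matchings of size $j-1$ in the path $G-p-q$ on $n-2$ vertices. Hence
\[ \ep_j(M)=\ep_j(M')+m_e^2\,\ep_{j-1}(N), \]
where $N$ is the compression of $M$ to $V\setminus\{p,q\}$, supported on a path with $n-2$ vertices. Substituting into the formula of Lemma \ref{lem:normMandXi}, and noting that $N$ has $\lfloor (n-2)/2\rfloor=r-1$, I expect
\[ \Xi_k(M)=\Xi_k(M')-m_e^2\sum_{j\ge1}(-1)^{j-1}\binom{r-j}{k-j}\ep_{j-1}(N)=\Xi_k(M')-m_e^2\,\Xi_{k-1}(N), \]
using $\binom{(r-1)-i}{(k-1)-i}=\binom{r-1-i}{k-1-i}$ with $i=j-1$. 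Oddness is used here: $n-2$ is odd, so the rank count $r-1$ for $N$ matches exactly. For even $n$ this bookkeeping would fail, and the cycle would also have two perfect matchings.

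Finally, by Lemma \ref{lem:compression}, $\Norm{N}\le\Norm{M}\le1$, so Lemma \ref{lem:normMandXi} gives $\Xi_{k-1}(N)\ge0$. For $k=1$ this is just $\Xi_0(N)=1$. Therefore
\[ \Xi_k(M')=\Xi_k(M)+m_e^2\,\Xi_{k-1}(N)\ge0 \]
for every $k$, so $\Norm{M'}\le1$. The main obstacle is verifying the displayed recursion carefully. This means checking that the binomial coefficients align after the index shift, and that Lemma \ref{lem:matchings} is applied with the correct $r$ for each of $M$, $M'$, and $N$. A small case such as $n=3$ or $n=5$ can serve as a sanity check.
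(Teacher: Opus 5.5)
Your proposal is correct and follows the paper's proof essentially step for step. The steps match: the matching decomposition $\ep_j(M)=\ep_j(M')+m_e^2\,\ep_{j-1}(N)$ via Lemma \ref{lem:matchings}, the identity $\Xi_k(M')=\Xi_k(M)+m_e^2\,\Xi_{k-1}(N)$, and the compression bound $\Norm{N}\le\Norm{M}\le 1$ together with Lemma \ref{lem:normMandXi}. One correction to your side remark: the index count $\lfloor (n-2)/2\rfloor=r-1$ works equally well for even $n$, and the paper reuses the same recursion there for $k<r$. Oddness is needed only so that Lemma \ref{lem:matchings} applies to the cycle itself, since no vertex subset of even size is then the whole cycle with its two perfect matchings.
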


\begin{proof}
 Let $n = 2r + 1$ and label the cycle $1 - 2 - \cdots - n - 1$, with $e = \{1, 2\}$.  Let $\Norm{M} \le 1$, where $M = [A,D]$. Thus, $M' = [A,D']$ is $M$ with $m_{12}$ replaced by $0$, and it suffices to prove $\Norm{M'} \le 1$.

 Both the odd cycle and the path remaining after the deletion of $e$ satisfy the hypothesis of Lemma \ref{lem:matchings}, and a set of $k$ disjoint edges of the cycle either avoids $e$ or consists of $e$ together with $k-1$ edges disjoint from $e$.  Let $S = \{3, 4, \dots, n\}$; note that $M_S$ is the matrix of the path $3 - 4 - \cdots - n$, whose edges are precisely those edges of the cycle that do not share a vertex with $e$.  Thus, by Lemma \ref{lem:matchings}, for $1 \le j \le r$,
 \[ \ep_j(M) = \ep_j(M') + m_{12}^2\ep_{j-1}(M_S),\]
 i.e., $\ep_j(M') = \ep_j(M) - m_{12}^2 \ep_{j-1}(M_S)$.  Thus, for $1 \le k \le r$,
 \begin{align*} \Xi_k(M') - \Xi_k(M) &= - \sum_{j = 1}^k (-1)^j {{r-j} \choose {k-j}} m_{12}^2 \ep_{j-1}(M_S)\\
 &= m_{12}^2 \sum_{i = 0}^{k-1}(-1)^i {{(r-1) -i} \choose {(k-1) - i}}\ep_i(M_S)\\
 &= m_{12}^2 \Xi_{k-1}(M_S) .
 \end{align*}
 By Lemma \ref{lem:compression}, $\Norm{M_S} \le \Norm{M} \le 1$, so  by Lemma \ref{lem:normMandXi} $\Xi_{k-1}(M_S) \ge 0$ for $1 \le k-1 \le r-1$, and $\Xi_0(M_S) = 1 > 0$.  Thus, $\Xi_k(M') \ge \Xi_k(M) \ge 0$ for every $k$, and $\Norm{M'} \le 1$ by Lemma \ref{lem:normMandXi}.
\end{proof}

\begin{example}
 \label{ex:C5}
 For $n = 5$, we have $r = 2$, $\ep_0(M) = 1$, \[\ep_1(M) = m_{12}^2 + m_{23}^2 + m_{34}^2 + m_{45}^2 + m_{51}^2,\] \[\ep_2(M) = m_{12}^2 m_{34}^2 + m_{23}^2 m_{45}^2 + m_{34}^2 m_{51}^2 + m_{45}^2 m_{12}^2 + m_{51}^2m_{23}^2,\]
 \[\Xi_1(M) = {2 \choose 1}\ep_0(M) - {1 \choose 0}\ep_1(M) = 2 - \ep_1(M),\]
 \[ \Xi_2(M) = {2 \choose 2}\ep_0(M) - {1 \choose 1}\ep_1(M) + {0 \choose 0}\ep_2(M) = 1 - \ep_1(M) + \ep_2(M).\]

 Set $S = \{3,4,5\}$.  Deleting $e = \{1, 2\}$ removes the term $m_{12}^2$ from $\ep_1(M)$ and the terms $m_{12}^2 m_{34}^2$ and $m_{45}^2 m_{12}^2$ from $\ep_2(M)$. Since
 \begin{align*} \Xi_1(M') &= 2 - \ep_1(M')\\
                          &= 2 - \ep_1(M) + m_{12}^2\\
                          &= \Xi_1(M) + m_{12}^2\Xi_0(M_S)\\
                          &\ge \Xi_1(M)\\
                          &\ge 0,
 \end{align*}
 and
 \begin{align*} \Xi_2(M') &= 1 - \ep_1(M') + \ep_2(M')\\
                          &= 1 - \ep_1(M) + \ep_2(M) + m_{12}^2 - m_{12}^2 m_{34}^2 - m_{45}^2 m_{12}^2\\
                          &= \Xi_2(M) + m_{12}^2\Xi_1(M_S)\\
                          &\ge \Xi_2(M)\\
                          &\ge 0,
 \end{align*}
by Lemma \ref{lem:normMandXi}, $\Norm{M'} \le 1$, and any $A$ admissible for a Dirac operator $D$ of the $5$-cycle is admissible for the Dirac operator $D'$ obtained from $D$ by deleting $e$.
\end{example}

\subsection{Edge deletion in $n$-cycles, $n \equiv 2 \pmod 4$.}

The situation for even cycles is more complicated.  When $n = 2r$ and $k < r$, the hypotheses of Lemma \ref{lem:matchings} still hold, and so in this case (using the notation of the proof of Theorem \ref{thm:odd}, where $S = \{3, 4, \dots, n\}$),
\[ \Xi_k(M') = \Xi_k(M) + m_{12}^2 \Xi_{k-1}(M_S) \ge \Xi_k(M) \ge 0.\]
Thus, for even cycles, it suffices to consider the case when $k = r$; however, in this case, the graph has two perfect matchings, and Lemma \ref{lem:matchings} no longer applies.  In this case, we have
\[ \ep_r(M) = \Pf(M)^2 = (m_{12}m_{34} \cdots m_{2r-1, 2r} - m_{23}m_{45} \cdots m_{2r, 1})^2,\]
and, after deleting the edge $e = \{1,2\}$, we have
\begin{align*}
\ep_r(M') - \ep_r(M) &= (m_{23}m_{45} \cdots m_{2r, 1})^2 - \ep_r(M)\\
      &= -m_{12}^2 \cdots m_{2r-1,2r}^2 + 2m_{12}m_{23}m_{34} \cdots m_{2r, 1}.
\end{align*}
Thus, using Lemma \ref{lem:normMandXi} we have
\begin{equation}
\label{eq:Xir}
\Xi_r(M') - \Xi_r(M) = m_{12}^2\Xi_{r-1}(M_S) + (-1)^r 2m_{12}m{23} \cdots m_{2r,1}.
\end{equation}

Indeed, this means it is possible that certain matrices $A$ admissible for a Dirac operator for an $n$-cycle may no longer be admissible after edge deletion.  We first prove a sufficient condition for such a matrix $A$ to remain admissible after edge deletion, which essentially follows from \eqref{eq:Xir}.

\begin{proposition}
 \label{prop:signcrit}
 Let $n = 2r$, let $D$ be a Dirac operator on the $n$-cycle, let $A$ be admissible for $D$, let $M = [A, D]$, and let $M'$ be obtained by deleting an edge.  If
 \[ (-1)^r m_{12}m_{23} \cdots m_{2r, 1} \ge 0,\]
 then $\Norm{M'} \le 1$; that is, $A$ remains admissible after the edge deletion.
\end{proposition}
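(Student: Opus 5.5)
The plan is to apply the criterion of Lemma \ref{lem:normMandXi} to $M'$, exactly as in the proof of Theorem \ref{thm:odd}. Relabel the cycle so that the deleted edge is $e=\{1,2\}$. The hypothesis $(-1)^r m_{12}m_{23}\cdots m_{2r,1}\ge 0$ is invariant under this relabeling: a cyclic shift leaves the product unchanged, and a reflection only changes the signs of the $n$ factors, which does not change the product since $n$ is even. Set $S=\{3,\dots,n\}$. We must show $\Xi_k(M')\ge 0$ for $1\le k\le r$.

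For $k<r$, no set of $2k<n$ vertices of the cycle induces a subgraph with two perfect matchings, since such a subgraph is a disjoint union of paths. Hence Lemma \ref{lem:matchings} gives, as noted before \eqref{eq:Xir},
\[ \Xi_k(M') = \Xi_k(M) + m_{12}^2\,\Xi_{k-1}(M_S). \]
By Lemma \ref{lem:compression}, $\Norm{M_S}\le\Norm{M}\le 1$. Lemma \ref{lem:normMandXi} then gives $\Xi_{k-1}(M_S)\ge 0$, and $\Xi_0(M_S)=1$. Also $\Xi_k(M)\ge 0$ because $A$ is admissible. So $\Xi_k(M')\ge 0$ in these cases.

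The remaining case is $k=r$, which is the one step requiring care; here \eqref{eq:Xir} is the tool. One check is needed first: the lower-order terms $\ep_j$ with $j<r$ obey the same recursion as above, and only $\ep_r$ carries the extra cross term. Granting this, \eqref{eq:Xir} gives
\[ \Xi_r(M') = \Xi_r(M) + m_{12}^2\,\Xi_{r-1}(M_S) + 2(-1)^r m_{12}m_{23}\cdots m_{2r,1}. \]
The first term is nonnegative since $A$ is admissible. The second is nonnegative by the compression argument above, using that $M_S$ is a $(2r-2)$-dimensional path matrix with $\Norm{M_S}\le 1$. The third is nonnegative by hypothesis. Hence $\Xi_r(M')\ge 0$.

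With $\Xi_k(M')\ge 0$ for all $1\le k\le r$, Lemma \ref{lem:normMandXi} yields $\Norm{M'}\le 1$. Since $M'=[A,D']$, this says $A$ is admissible for $D'$.

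The main obstacle is confirming the sign bookkeeping in \eqref{eq:Xir}, namely that the coefficient of $\ep_r$ in $\Xi_r$ is $(-1)^r$. This follows from the formula for $\Xi_k$ in Lemma \ref{lem:normMandXi} with $j=k=r$, where the binomial coefficient is $\binom{0}{0}=1$. Combined with $\ep_r(M')-\ep_r(M) = -\prod m_{2i-1,2i}^2 + 2\prod_{\text{cycle}} m$, it produces exactly the cross term displayed above.
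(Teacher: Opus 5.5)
Your proposal is correct and follows the paper's proof. You relabel so that $e=\{1,2\}$, handle $k<r$ through the odd-cycle recursion $\Xi_k(M')=\Xi_k(M)+m_{12}^2\Xi_{k-1}(M_S)$, handle $k=r$ through \eqref{eq:Xir} with every term on the right nonnegative, and conclude with Lemma \ref{lem:normMandXi}; your checks of the sign in \eqref{eq:Xir} and of the relabeling invariance are accurate and simply make explicit what the paper leaves implicit.
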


\begin{proof}
  Relabeling the cycle if necessary, we may assume $e = \{1,2\}$; note that this does not change the product $m_{12}m_{23} \cdots m_{2r, 1}$.  As noted above, when $k < r$,
  $\Xi_k(M') \ge 0$, following the proof of Theorem \ref{thm:odd} mutatis mutandis.  When $k = r$, by \eqref{eq:Xir}, we have
  \[ \Xi_r(M') - \Xi_r(M) = m_{12}^2\Xi_{r-1}(M_S) + (-1)^r 2m_{12}m_{23} \cdots m_{2r,1},\]
  and both terms on the righthand side are nonnegative under the hypothesis.  By Lemma \ref{lem:normMandXi}, $\Norm{M'} \le 1$.
\end{proof}

For $x \in \R^{n-1}$ define $L(x)$ to be the skew-symmetric matrix of the path $2 - 3 - \cdots - n - 1$ with entries from $x$: namely, $L(x)_{i,i+1} = x_i$ for the edge $\{i,i+1\}$, and $L(x)_{i,j} = 0$ when $i \le j$ otherwise.

\begin{proposition}
 \label{prop:reduction}
 Let $n = 2r,$ let $D$ be any Dirac operator on the $n$-cycle, let $e = \{1,2\}$, let $A$ be admissible for $D$ with $a_1 - a_2 = \dD(1,2)$, let $M = (m_{i,j}) = [A,D]$, and let $\mu_i = \max(-m_{i,i+1},0)$ for $2 \le i \le n$, where the subscripts are read modulo $n$, and set $\mu = (\mu_2, \dots, \mu_n)$.  Then, $m_{12} \ge 0$ and
 \[ \dD(1,2) \le \Norm{L(\mu)} d^{D'}(1,2).\]
\end{proposition}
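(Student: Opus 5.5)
The plan is to turn the optimal $A$ for $D$ directly into a competitor for $D'$: keep only the increments of $A$ that go ``the right way'' along the path $2 - 3 - \cdots - n - 1$ that remains after deleting $e$.

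First, $m_{12} \ge 0$ is immediate. We have $m_{12} = (a_1 - a_2)d_{12}$, $d_{12} > 0$, and $a_1 - a_2 = \dD(1,2) \ge 0$.

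Next, orient the remaining path from $2$ to $1$ through $3, \dots, n$, reading indices modulo $n$ so that $n+1 = 1$. For each edge $\{i,i+1\}$ with $2 \le i \le n$ we have $m_{i,i+1} = (a_i - a_{i+1})d_{i,i+1}$, hence
\[ a_{i+1} - a_i = -\frac{m_{i,i+1}}{d_{i,i+1}}. \]
Define a real diagonal matrix $B = \diag(b_1, \dots, b_n)$ by setting $b_2 = 0$ and, recursively along the path,
\[ b_{i+1} - b_i = \frac{\mu_i}{d_{i,i+1}} = \max(a_{i+1} - a_i, 0), \qquad 2 \le i \le n. \]
This is consistent because the path visits each vertex exactly once, ending at $1$. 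Telescoping gives
\[ b_1 - b_2 = \sum_{i=2}^{n} \max(a_{i+1} - a_i, 0) \ge \sum_{i=2}^n (a_{i+1} - a_i) = a_1 - a_2 = \dD(1,2). \]

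Now compute $[B,D']$. Since $D'$ is supported on the path $2 - 3 - \cdots - n - 1$, the only nonzero entries are
\[ [B,D']_{i,i+1} = (b_i - b_{i+1})d_{i,i+1} = -\mu_i \]
and their skew-symmetric partners. So $[B,D'] = L(-\mu) = -L(\mu)$, and $\Norm{[B,D']} = \Norm{L(\mu)}$.

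If $\Norm{L(\mu)} > 0$, then $B/\Norm{L(\mu)}$ is admissible for $D'$. By \eqref{eq:def},
\[ d^{D'}(1,2) \ge \frac{b_1 - b_2}{\Norm{L(\mu)}} \ge \frac{\dD(1,2)}{\Norm{L(\mu)}}, \]
which is the claim after rearranging. If $\Norm{L(\mu)} = 0$, then every $\mu_i = 0$, so every increment $a_{i+1} - a_i$ along the path is nonpositive. Hence $\dD(1,2) = a_1 - a_2 \le 0$, so $\dD(1,2) = 0$ and the inequality is trivial, since $d^{D'}(1,2)$ is finite: $1$ and $2$ remain connected by the path.

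I do not expect a serious obstacle. The only points needing care are that the index convention in $L(\cdot)$ matches the path $2 - 3 - \cdots - n - 1$, including the wrap-around edge $\{n,1\}$ carrying $\mu_n$, and that the overall sign $-L(\mu)$ does not affect the norm.
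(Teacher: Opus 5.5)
Your proof is correct and takes essentially the same route as the paper. You build $B$ along the path $2 - 3 - \cdots - n - 1$ with increments $w_{i,i+1}\mu_i$, note that $[B,D'] = \pm L(\mu)$, and compare the telescoped sum with $a_1 - a_2$. The paper differs only cosmetically: it records $\dD(1,2) \le \sum_{i=2}^n w_{i,i+1}\mu_i$ via $\sum_i z_i = 0$ and builds $B$ from the normalized vector $\mu/\Norm{L(\mu)}$, whereas you use positive parts of the increments and rescale at the end.
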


\begin{proof}
 For $1 \le i \le n$, we set $z_i = a_i - a_{i+1}$; here, the indices are read modulo $n$, so $z_n = a_n - a_1$.  Thus, \[m_{i, i+1} = (a_i - a_{i+1})d_{i,i+1} = \frac{z_i}{w_{i,i+1}}.\]
 Moreover, since
 \[ \sum_{i = 1}^n z_i = 0\]
 and $\dD(1,2) = z_1 \ge 0$, we have
 \begin{equation}
 \label{eq:z1}
 \dD(1,2) = z_1 = \sum_{i = 2}^n w_{i,i+1}(-m_{i,i+1}) \le \sum_{i = 2}^n w_{i,i+1} \mu_i.
  \end{equation}
Furthermore, $m_{12} \ge 0$ since $z_1, w_{12} \ge 0$.

 Assume first that $L(\mu) = 0$.  Since the nonzero entries of $L(\mu)$ are $\pm \mu_i$, this means $\mu = 0$, and so $m_{i,i+1} \ge 0$ for $i \ge 2$.  Hence $z_1 \le 0$, i.e., $\dD(1,2) = z_1 = 0$.  Since $0 \le 0 \cdot d^{D'}(1,2)$, the result holds trivially in this case.

 Otherwise, set $x = \mu/\Norm{L(\mu)}$, so $x_i \ge 0$ for each $i$ and $\Norm{L(x)} = 1$.  Deleting $e$ leaves the path $2 - 3 - \cdots - n - 1$, whose edges are $e_{23}, \dots, e_{n,1}$.  Define $B \colonequals \diag(b_1, b_2, \dots, b_n)$, where $b_2 = 0$ and $b_{i+1} = b_i - w_{i,i+1}x_i$ for $2 \le i \le n$ (again, subscripts are read modulo $n$).  Note that the entry of $[B,D']$ on $e_{i,i+1}$ for $2 \le i \le n$ is
 \[ (b_i - b_{i+1})d_i = w_{i,i+1}x_i/w_{i,i+1} = x_i.\]  Since $[B,D']$ has no other nonzero entries, $[B,D'] = L(x)$, i.e., $\Norm{[B,D']} = \Norm{L(x)} = 1$, and $B$ is admissible for $D'$.  Telescoping along the path from vertex $2$ to vertex $1$, we have
 \[ d^{D'}(1,2) \ge |b_1 - b_2| = \left|\sum_{i=2}^{n} (b_i - b_{i+1})\right| = \sum_{i = 2}^n w_{i,i+1} x_i = \frac{1}{\Norm{L(\mu)}} \sum_{i = 2}^n w_{i,i+1} \mu_i \ge  \frac{\dD(1,2)}{\Norm{L(\mu)}},\]
 completing the proof.
\end{proof}

We can now prove that \eqref{eq:question} holds for edge deletion in $2r$-cycles, where $r$ is odd.

\begin{theorem}
 \label{thm:2mod4}
 Let $n = 2r$, where $r$ is odd.  Let $D$ be a Dirac operator on an $n$-cycle, $e$ be any edge, and $D'$ the Dirac operator obtained by deleting $e$.  For any vertices $x$ and $y$,
 \[ d^{D'}(x,y) \ge \dD(x,y). \]
\end{theorem}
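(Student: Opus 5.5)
The plan is to reduce first to the case where $\{x,y\}$ is the deleted edge, and then to treat the two possible signs of the cyclic product $\Pi \colonequals m_{12}m_{23}\cdots m_{2r,1}$ separately. Label the cycle so that $e=\{1,2\}$.

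\emph{Step 1: arbitrary $x,y$.} Let $A$ be admissible for $D$ with $a_x-a_y=\dD(x,y)$, and put $M=[A,D]$. If $(-1)^r\Pi\ge 0$, that is $\Pi\le 0$ since $r$ is odd, then Proposition~\ref{prop:signcrit} shows $A$ is admissible for $D'$, so $d^{D'}(x,y)\ge \dD(x,y)$. This case works for any $x,y$.

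\emph{Step 2: the case $\Pi>0$.} I will not try to keep $A$. Instead I will build a new admissible $B$ for $D'$ by the telescoping device of Proposition~\ref{prop:reduction}.
\begin{itemize}
\item Write $z_i=a_i-a_{i+1}$, so that $m_{i,i+1}=z_i/w_{i,i+1}$ and $\sum_i z_i=0$.
\item $\Pi>0$ means an even number of the $z_i$ are negative, and $\sum z_i=0$ forces at least one negative and one positive $z_i$.
\item The quantity $a_x-a_y$ is the sum of $w_{i,i+1}m_{i,i+1}$ along either arc from $x$ to $y$. Use the arc that survives in $G-e$, which is all of $2-3-\cdots-n-1$ read between $x$ and $y$.
\item Choose new path entries $\mu_i\ge 0$ as follows: keep $|m_{i,i+1}|$ on the edges of that arc whose sign contributes positively, and replace the others by $0$, as in Proposition~\ref{prop:reduction}.
\item Then $\dD(x,y)\le \sum w_{i,i+1}\mu_i$ along the arc, and integrating $L(\mu)/\Norm{L(\mu)}$ along the path gives $d^{D'}(x,y)\ge \dD(x,y)/\Norm{L(\mu)}$.
\end{itemize}
So everything reduces to the claim
\[
\Norm{L(\mu)}\le 1 .
\]

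\emph{Step 3: proving $\Norm{L(\mu)}\le 1$ (the main obstacle).} Since $L(\mu)$ is supported on a path, its singular values depend only on the absolute values of its entries. By Lemma~\ref{lem:matchings}, each $\ep_k(L(\mu))$ is a sum of squared weights over $k$-matchings. Zeroing entries can only help the lower-order conditions: the identity $\Xi_k(\cdot)+(\text{entry})^2\Xi_{k-1}(\text{compression})$ from the proof of Theorem~\ref{thm:odd}, iterated one edge at a time, shows that $\Xi_k(L(\mu))\ge \Xi_k(M)\ge 0$ for $k<r$. The real issue is the top coefficient $k=r$. There \eqref{eq:Xir} shows that deleting $e$ costs the term
\[
(-1)^r\,2\,m_{12}m_{23}\cdots m_{2r,1}=-2\Pi<0 .
\]
My plan is to show that zeroing at least one negatively oriented entry kills the alternating product $m_{23}m_{45}\cdots m_{2r,1}$ or the product $m_{12}m_{34}\cdots m_{2r-1,2r}$. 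Such an entry exists because some $z_i<0$; if the only negative $z_i$ lie on the complementary arc, I will first swap the roles of the two arcs. Zeroing it removes the cross term entirely, so $\ep_r(L(\mu))$ becomes at most the single squared perfect-matching product. Then
\[
\Xi_r(L(\mu))\ge \Xi_r(M)+m_{12}^2\,\Xi_{r-1}(M_S)\ge 0 .
\]
The parity hypothesis enters here. Because $r$ is odd, $\Pi>0$ means the two perfect-matching products have the \emph{same} sign. Hence $\Pf(M)^2=(|\Pi_{\rm odd}|-|\Pi_{\rm even}|)^2$ is small, and removing one matching cannot push $\Xi_r$ below its value for $M$.

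I expect the bookkeeping of which edges are zeroed, and the verification that the comparison of $\ep_r$ goes the right way for both choices of arc, to be the delicate point. If it fails, my fallback is to choose $\mu$ more cleverly: keep $|m|$ on one full perfect matching of the path and shrink a single entry instead of zeroing it.
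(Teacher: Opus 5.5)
\textbf{There is a genuine gap in Step 3.} Step~1 is correct, and splitting into $\Pi\le 0$ and $\Pi>0$ is a legitimate variant of the paper's case split. The parity of $r$ is needed only in Step~1.

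The failure is the $k=r$ claim $\Xi_r(L(\mu))\ge \Xi_r(M)+m_{12}^2\,\Xi_{r-1}(M_S)$, which does not hold in general. By \eqref{eq:Xir}, $\Xi_r(M')$ falls short of that right-hand side by exactly $2\Pi$. Zeroing a further path entry $m_{j,j+1}$ raises $\Xi_r$ only by $m_{j,j+1}^2$ times a $\Xi_{r-1}$ of a compression. That gain is quadratic in $m_{j,j+1}$, while $2\Pi$ is linear in it. Concretely, take $n=6$, $m_{12}=0.4$, $m_{23}=\beta>0$ and $m_{34}=m_{45}=m_{56}=m_{61}=-0.4$. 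Then $\Pi>0$ and $\Norm{M}\le 0.8$, and a direct computation gives $\Xi_3(L(\mu))-\Xi_3(M)-m_{12}^2\Xi_2(M_S)=\beta^2(1-m_{45}^2-m_{56}^2-m_{61}^2+m_{45}^2m_{61}^2)-2\Pi$, which is negative for small $\beta$. Your argument never uses maximality of $A$, so this counterexample defeats it.

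The parity heuristic also points the wrong way. The identity $\Pf(M)^2=(|\Pi_{\rm odd}|-|\Pi_{\rm even}|)^2$ under $\Pi>0$ holds for every $r$. For odd $r$ the top coefficient of $\Xi_r$ carries sign $-1$, so $\Pi>0$ is precisely when the cross term $-2\Pi$ in \eqref{eq:Xir} hurts. Finally, ``swapping the arcs'' is unavailable, because the other arc runs through the deleted edge.

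\textbf{The missing idea: no $\Xi_r$ bookkeeping is needed once $\mu$ vanishes somewhere on the path $2-3-\cdots-n-1$.}
\begin{itemize}
\item For $\{x,y\}=\{1,2\}$, $\Pi>0$ forces $m_{12}>0$, so the product of the $2r-1$ entries $m_{23},\dots,m_{n,1}$ is positive. Hence an even number of them are negative, and since $2r-1$ is odd, some $m_{j,j+1}>0$ with $j\ge 2$, i.e.\ $\mu_j=0$.
\item $L(\mu)$ is then block diagonal over the maximal runs of nonzero entries, and each block equals $-M_T$ for the vertex set $T$ of its run.
\item Each run is a proper subpath, so $T$ does not contain both $1$ and $2$ and therefore induces a path in the cycle.
\item Lemma~\ref{lem:compression} gives $\Norm{L(\mu)}=\max_T\Norm{M_T}\le\Norm{M}\le 1$, and Proposition~\ref{prop:reduction} finishes.
\item For $\{x,y\}\neq\{1,2\}$, the $x$--$y$ path in $G-e$ is an induced subgraph of the cycle, so Propositions~\ref{prop:prune} and~\ref{prop:deletion} give the inequality directly.
\end{itemize}
This is exactly the paper's argument.
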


\begin{proof}
 Let $n = 2r$ and label the cycle $1 - 2 - \cdots - n - 1$, and without loss of generality let $e = \{1, 2\}$.  We first treat the pair $\{x,y\} = \{1,2\}$.  Let $A$ be admissible for $D$ with $a_1 - a_2 = \dD(1,2)$; such an $A$ exists because the supremum in \eqref{eq:def} is attained (after normalizing $a_2 = 0$, the constraints $|a_i - a_{i+1}| \le w_{i,i+1}$ confine the admissible diagonals to a compact set, on which $a_1 - a_2$ is continuous). Let $M = (m_{i,j}) = [A,D]$.  By Proposition \ref{prop:reduction}, we have $m_{12} \ge 0$.  Let $\mu_i = \max(-m_{i,i+1}, 0)$ for $2 \le i \le n$ (again, $m_{n, n+1} = m_{n,1}$) and set $\mu = (\mu_2, \dots, \mu_n)$.

 Suppose first that $m_{j,j+1} \ge 0$ for some $2 \le j \le n$.  Then, $\mu_j = 0$, and $L(\mu)$ is the direct sum of blocks corresponding to the maximal runs of consecutive nonzero entries of $\mu$, together with a zero block: the vertex sets of distinct runs are disjoint and no nonzero entry of $L(\mu)$ joins two of them, so the coordinate subspace spanned by the vertices of each run is invariant, and $\Norm{L(\mu)}$is the largest of the norms of the blocks.  It thus suffices to prove that $\Norm{L(\nu)} \le 1$ for each run $\nu = (\mu_i, \mu_{i+1}, \dots, \mu_{i + k - 1}) \in \R^k$, all of whose entries are nonzero, where $k \le n - 1$ and $L(\nu)$ denotes the skew-symmetric matrix of the path $i - (i + 1) - \cdots - (i + k)$ with entries $\nu$ and indices read modulo $n$.  In particular, since $\mu_j = 0$, it is impossible that both $i = 2$ and $i + k - 1 = n$.  Thus, if $T = \{i , i + 1, \dots, i + k\}$ is the set of the $k+1$ vertices of the run, then $T$ does not contain both $1$ and $2$.  Hence, the only edges of the cycle with both vertices in $T$ are $\{i, i+1\}, \dots, \{i + k -1, i + k\}.$  Since $\mu_\ell = -m_{\ell, \ell+1}$ for each $\ell$ in the run, $L(\nu) = -M_T$, and by Lemma \ref{lem:compression},
 \[ \Norm{L(\nu)} = \Norm{M_T} \le \Norm{M} \le 1.\]
 Hence, $\Norm{L(\mu)} \le 1$ in this case, so by Proposition \ref{prop:reduction},
 \[ \dD(1,2) \le \Norm{L(\mu)} d^{D'}(1,2) \le d^{D'}(1,2).\]

 Now suppose $m_{j,j+1} < 0$ for $2 \le j \le n$.  We already know that $m_{12} \ge 0$, and $r$ is odd, so
 \[ (-1)^rm_{12}m_{23} \cdots m_{n,1} \ge 0. \]
 By Proposition \ref{prop:signcrit}, $\Norm{M'} \le 1$, meaning $A$ is admissible for $D'$ as well, and hence
 \[ \dD(1,2) \le d^{D'}(1,2)\]
 in any case.

 Finally, let $\{x,y\} \neq \{1,2\}$.  The graph obtained by deleting $e$ is the path $2 - 3 - \cdots - n - 1$, and the union of the simple paths from $x$ to $y$ is the path between $x$ and $y$, which contains $1$ and $2$ only when $\{x,y\} = \{1,2\}$.  The path is therefore an induced subgraph of the cycle, so, by Propositions \ref{prop:deletion} and \ref{prop:prune}, $d^{D'}(x,y) \ge \dD(x,y)$, completing the proof.
\end{proof}

\subsection{Edge deletion in $n$-cycles with $n \equiv 0 \pmod 4$.}

We can see in the proof of Theorem \ref{thm:2mod4} where things can break down when $n$ is divisible by $4$: if $m_{j,j+1} < 0$ for $2 \le j \le n$, we have
\[ (-1)^rm_{12}m_{23} \cdots m_{n,1} < 0 \]
if $m_{12} \neq 0$, since $m_{12} \ge 0$ and $r$ is even.  Indeed, it is possible to construct an example where \eqref{eq:question} fails when $n$ is divisible by $4$, and the purpose of this subsection is to provide such an example for each such $n$.

Throughout this subsection, we let $n = 2r$, where $r$ is even.  Let
\[ \theta = \frac{\pi}{n+1}, \quad c^* = \frac{1}{2\cos \theta}.\]
The $n$-cycle is labeled $1 - 2 - \cdots - n - 1$, where the edge $\{i, i+1\}$ has weight $w_{i,i+1}$ with $d_{i,i+1} = w_{i,i+1}^{-1}$, and the deleted edge is $e = \{1,2\}.$

\begin{lemma}
 \label{lem:weighting}
 Let $M = (m_{ij})$ be a skew-symmetric matrix such that $m_{ij} = 0$ unless $j \equiv i \pm 1 \pmod n$, and let $w_{12}, \dots, w_{n,1}$ be a choice of weights for the $n$-cycle, where $w_{j,j+1}$ is the weight of $\{j, j+1\}$.  If $\Norm{M} \le 1$ and $\sum_{i = 1}^n w_{i,i+1} m_{i,i+1} = 0$, then there exists an admissible diagonal matrix $A$ for $D$ such that $[A,D] = M$, and $\dD(1,2) \ge w_{12}m_{12}$.
\end{lemma}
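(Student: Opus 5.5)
We construct $A$ directly by prescribing its consecutive differences along the cycle. Set $z_i \colonequals w_{i,i+1} m_{i,i+1}$ for $1 \le i \le n$, with indices read modulo $n$, so $z_n = w_{n,1}m_{n,1}$. Define $a_1 \colonequals 0$ and recursively $a_{i+1} \colonequals a_i - z_i$ for $1 \le i \le n-1$. Telescoping gives
\[ a_n - a_1 = -\sum_{i=1}^{n-1} z_i. \]
The hypothesis $\sum_{i=1}^n z_i = 0$ is exactly the statement that $a_n - a_1 = z_n$. So the recursion $a_i - a_{i+1} = z_i$ also holds for the closing edge $\{n,1\}$, i.e., the prescribed differences are consistent around the cycle. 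This compatibility is the only point needing care, and it is precisely what the hypothesis supplies.

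Next, let $A = \diag(a_1,\dots,a_n)$ and compute $[A,D]$. Its $(i,j)$ entry is $(a_i - a_j)d_{ij}$, which vanishes unless $\{i,j\}$ is a cycle edge, since $D$ is supported on the cycle and has zero diagonal. On the edge $\{i,i+1\}$ the entry is
\[ (a_i - a_{i+1})\,d_{i,i+1} = z_i / w_{i,i+1} = m_{i,i+1}. \]
Both $[A,D]$ and $M$ are skew-symmetric, so the $(i+1,i)$ entries also agree. Since $M$ has no entries off the cycle edges, we get $[A,D] = M$. Therefore $\Norm{[A,D]} = \Norm{M} \le 1$, and $A$ is admissible for $D$.

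Finally, by the definition \eqref{eq:def},
\[ \dD(1,2) \ge |a_1 - a_2| = |z_1| = |w_{12} m_{12}| \ge w_{12}m_{12}. \]
If one of the weights is infinite, the corresponding $d_{ij}$ is $0$ and the edge contributes nothing to $[A,D]$. The weights in this subsection are finite, however, so no such degenerate case arises.
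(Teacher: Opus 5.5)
Your proposal is correct and follows essentially the same route as the paper's proof. Both set $a_1 = 0$ and $a_{i+1} = a_i - w_{i,i+1}m_{i,i+1}$, use the hypothesis $\sum_i w_{i,i+1}m_{i,i+1} = 0$ to close the recursion on the edge $\{n,1\}$, check entrywise that $[A,D] = M$, and then read off $\dD(1,2) \ge a_1 - a_2 = w_{12}m_{12}$ from the definition.
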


\begin{proof}
 We construct a specific admissible $A$ giving us $[A,D] = M$.  Define $a_1 \colonequals 0$ and $a_{i+1} \colonequals a_i - w_{i,i+1} m_{i,i+1}$ for $1 \le i \le n-1$, and let $A = \diag(a_1, \dots, a_n)$; indeed,
 \[ m_{i,i+1} = w_{i,i+1}m_{i,i+1}d_{i,i+1} = (a_i - a_{i+1})d_{i,i+1},\]
 and, since $\sum_{i = 1}^n w_{i,i+1} m_{i,i+1} = 0$,
 \[ m_{n,1} = d_{n,1}(w_{n,1} m_{n,1}) = d_{n,1}\left(-\sum_{i = 1}^{n-1}w_{i, i+1}m_{i, i+1}\right) = d_{n,1}a_{n} = (a_n - a_1)d_{n,1}.\]  All other entries of $[A,D]$ vanish, and $[A,D]$ is skew-symmetric; hence $[A,D] = M$, $A$ is admissible for $D$, and $\dD(1,2) \ge a_1 - a_2 = w_{12}m_{12}$.
\end{proof}

In the next two lemmas, the rows and columns of $L(x)$ are indexed in the order of the path as in the proof of Lemma \ref{lem:weighting}, i.e.,
\[ u_1 = 2, \quad u_2 = 3, \quad \dots, \quad u_{n-1} = n, \quad u_n = 1;\]
note that simultaneously permuting the rows and columns in this way does not change norms, singular values, or expressions of the form $y^t L(x) z$ when $y$ and $z$ are indexed in the same way as $L(x)$.  With respect to this order, $L(x)$ is tridiagonal with $L(x)_{u_j, u_{j+1}} = x_{j+1} = -L(x)_{u_{j+1}, u_j}$ for $1 \le j \le n-1$.  Recalling that $\theta = \pi/(n+1)$, we write $h_j = \sin(j\theta)$ for $0 \le j \le n+1$, so that $h_0 = h_{n+1} = 0$, $h_j > 0$ for $1 \le j \le n$, and $h_{j-1} + h_{j+1} = 2\cos(\theta)h_j$ for $1 \le j \le n$.

\begin{lemma}
 \label{lem:constantpath}
 Let $c > 0$ and $\mathbf{1} \in \R^{n-1}$ be the all-ones vector.  For $1 \le k \le n$ the vector $g^{(k)} \in \C^n$ with entries $g_j^{(k)} = \ii^{j-1}\sin(jk\theta)$ satisfies
 \[ L(c\mathbf{1})g^{(k)} = 2\ii c \cos(k\theta) g^{(k)}.\]
 Consequently, the singular values of $L(c\mathbf{1})$ are the numbers $s_k \colonequals 2c\cos(k\theta)$, $1 \le k \le r$, each with multiplicity two, and they are positive and pairwise distinct.  Thus, $\Norm{L(c\mathbf{1})} = 2c\cos \theta$, and, if $c^* = \frac{1}{2\cos(\theta)}$, $\Norm{L(c^*\mathbf{1})} = 1$ and, in this case, for $2 \le k \le r$, the singular values are
 \[ s_k = \frac{\cos(k\theta)}{\cos \theta} < 1.\]
\end{lemma}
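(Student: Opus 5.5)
The plan is to verify the eigenvector equation entrywise, using the three-term recurrence for the $h_j$, and then read off the singular values from Proposition \ref{prop:skew}(i). In the path order $u_1,\dots,u_n$, the matrix $L(c\mathbf{1})$ is tridiagonal with $c$ on the superdiagonal and $-c$ on the subdiagonal. Hence for any vector $g$,
\[ (L(c\mathbf{1})g)_j = c\,g_{j+1} - c\,g_{j-1}, \]
with the conventions $g_0 = g_{n+1} = 0$. With $g_j = \ii^{j-1}\sin(jk\theta)$, I will factor out $\ii^{j-1}$. This gives
\[ c\,\ii^{j-1}\bigl(\ii\sin((j+1)k\theta) - \ii^{-1}\sin((j-1)k\theta)\bigr) = \ii c\,\ii^{j-1}\bigl(\sin((j+1)k\theta)+\sin((j-1)k\theta)\bigr), \]
since $-\ii^{-1} = \ii$. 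The sum-to-product identity turns this into $2\ii c\cos(k\theta)\,g_j$. The boundary cases $j=1$ and $j=n$ are consistent because $\sin(0)=0$ and $\sin((n+1)k\theta)=\sin(k\pi)=0$. This is the same recurrence as $h_{j-1}+h_{j+1} = 2\cos(\theta)h_j$, now with $k\theta$ in place of $\theta$.

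Next I obtain the full spectrum. For $1\le k\le n$ the vectors $g^{(k)}$ are nonzero, since $\sin(k\theta)\ne0$ gives $g_1^{(k)}\ne0$. The corresponding eigenvalues $2\ii c\cos(k\theta)$ are pairwise distinct because $\cos$ is injective on $(0,\pi)$ and $k\theta\in(0,\pi)$. This gives $n$ distinct eigenvalues, so it is the entire spectrum. They pair up as $\pm 2\ii c\cos(k\theta)$, since $\cos((n+1-k)\theta) = -\cos(k\theta)$. None is zero, because $n$ even means $k\theta=\pi/2$ would require $n+1=2k$, which is impossible.

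By Proposition \ref{prop:skew}(i), or simply because $L$ is normal, the singular values are the moduli of the eigenvalues. These are $s_k = 2c\cos(k\theta)$ for $1\le k\le r$, each with multiplicity two. They are positive and strictly decreasing in $k$, because $k\theta<\pi/2$ for $k\le r$; here $r\theta = r\pi/(2r+1) < \pi/2$.

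The remaining claims then follow at once. We have $\Norm{L(c\mathbf{1})} = s_1 = 2c\cos\theta$, and substituting $c=c^*$ gives norm $1$ and $s_k = \cos(k\theta)/\cos\theta<1$ for $k\ge2$ by strict monotonicity. The only step needing any care is the boundary and sign bookkeeping in the recurrence, namely the factor $\ii^{\pm1}$ and the fact that $\sin((n+1)k\theta)=0$. I do not expect any real obstacle.
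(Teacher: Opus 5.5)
Your proposal is correct and follows essentially the same route as the paper. Both verify the eigenvector equation entrywise via the sine recurrence with the conventions $g_0 = g_{n+1} = 0$, count $n$ distinct eigenvalues, pair them using $\cos((n+1-k)\theta) = -\cos(k\theta)$, and read off the singular values and their positivity from $k\theta < \pi/2$ for $k \le r$; the paper's remark that $\ii Q^* L(c\mathbf{1}) Q$ is a real symmetric tridiagonal Toeplitz matrix is only motivation, and your extra observation that no eigenvalue vanishes (since $n+1$ is odd) is a harmless, slightly more explicit touch.
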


\begin{proof}
 This is the idea used in the proof of \cite[Lemma 3.3]{CLPS25}.  If we define
 \[ Q \colonequals \diag(\ii, \ii^2, \dots, \ii^n),\]
 indexed along the path, then the matrix $\ii Q^* L(c\mathbf{1}) Q$ is a real symmetric tridiagonal Toeplitz matrix, with all off-diagonal entries equal to $-c$.  We write $\omega_j = \sin(jk\theta)$, so that $\omega_0 = \omega_{n+1} = 0$ and $\omega_{j-1} + \omega_{j+1} = 2\cos(k\theta)\omega_j$.  With the convention $g_0^{(k)} = g_{n+1}^{(k)} = 0$, which is consistent with $\omega_0 = \omega_{n+1} = 0$, for $1 \le j \le n$, we have
 \begin{align*}
  \left(L(c\mathbf{1})g^{(k)}\right)_j &= cg_{j+1}^{(k)} - cg_{j-1}^{(k)} = c\left(\ii^j \omega_{j+1} - \ii^{j-2}\omega_{j-1} \right)\\
   &= c \ii^{j-1}\ii\left(\omega_{j+1} + \omega_{j-1} \right) = 2\ii c \cos(k\theta)g_j^{(k)},\\
 \end{align*}
using $-\ii^{j-2} = \ii^{j-1} \cdot \ii$.  For $1 \le k \le n$, the values of $\cos(k \theta)$ are distinct since $0 < k\theta < \pi$, so this exhibits $n$ eigenvectors for $n$ distinct eigenvalues; hence, the spectrum of $L(c\mathbf{1})$ is $\{2 \ii c \cos(k\theta): 1 \le k \le n\}$.  Since $\cos((n+1 - k)\theta) = -\cos(k\theta)$, the spectrum can be expressed as $\{\pm 2\ii c \cos(k\theta): 1 \le k \le r\}$, and $\cos(k\theta) > 0$ for $k \le r$ because $k\theta < \pi/2$ exactly when $2k < n+1$.  The result follows.
\end{proof}

Note that $0 < (i-1)\theta < i\theta \le n\theta < \pi$ when $2 \le i \le n$, so $\sin((i-1)\theta)\sin(i\theta) > 0$.

\begin{lemma}
 \label{lem:sine}
 For $1 \le i \le n$, define
 \[ \varpi_i \colonequals \sin((i-1)\theta)\sin(i\theta).\]
 If the path $2 - 3- \cdots - n - 1$ has a weight of $\varpi_j$ on the edge $\{j, j+1\}$ (where $\{n, n+1\} = \{n,1\}$), then
 \[ d^{D'}(1,2) = c^* \sum_{i = 1}^n \varpi_i.\]
\end{lemma}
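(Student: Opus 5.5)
We prove two inequalities, one in each direction, for the path $P$: $2 - 3 - \cdots - n - 1$ with weight $\varpi_j$ on the edge $\{j,j+1\}$. Along $P$, in the order $u_1 = 2, \dots, u_n = 1$, the $j$-th edge is $\{j+1,j+2\}$ and has weight $\varpi_{j+1} = h_j h_{j+1}$. By pruning (Proposition \ref{prop:prune}), $d^{D'}(1,2)$ is the supremum of $|b_{u_1} - b_{u_n}|$ over diagonal $B$ with $\Norm{[B,D']} \le 1$. For any such $B$, the matrix $[B,D']$ is $L(x)$ for some $x$, with $x_j$ the entry on the $j$-th edge, and telescoping gives
\[ b_{u_1} - b_{u_n} = \sum_{j=1}^{n-1} h_j h_{j+1} x_j .\]
Conversely, every $x$ with $\Norm{L(x)} \le 1$ arises from some admissible $B$, by the same construction as in the proof of Proposition \ref{prop:reduction}. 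So
\[ d^{D'}(1,2) = \sup\Bigl\{ \textstyle\sum_{j} h_j h_{j+1} x_j : \Norm{L(x)} \le 1 \Bigr\}, \]
where we may take the sum with either sign since $x \mapsto -x$ preserves the norm. The sum over $j = 1, \dots, n-1$ of $h_j h_{j+1}$ equals $\sum_{i=1}^n \varpi_i$, because $\varpi_1 = 0$.

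For the lower bound, take $x = c^*\mathbf{1}$. By Lemma \ref{lem:constantpath}, $\Norm{L(c^*\mathbf{1})} = 1$, and this choice gives exactly $c^*\sum_i \varpi_i$.

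For the upper bound, use the top eigenvector $g = g^{(1)}$, whose entries are $g_j = \ii^{j-1}h_j$. Define real vectors $y, z \in \R^n$ by $y_j = \mathrm{Re}(\ii^{j-1})h_j$ and $z_j = \mathrm{Im}(\ii^{j-1})h_j$. For odd $j$ the entry $g_j$ is real and for even $j$ it is imaginary, so $y$ is supported on odd indices, $z$ on even ones, and $y_j$, $z_j$ carry alternating signs $\pm h_j$. For any $x$, the tridiagonal structure of $L(x)$ gives
\[ y^t L(x) z - z^t L(x) y = \pm\sum_j x_j h_j h_{j+1} , \]
with one sign for every $j$. We check this sign consistency through the factor $\ii^{j-1}\cdot\overline{\ii^{j}}$: equivalently, $\bar g^t L(x) g = -\ii\sum_j x_j h_jh_{j+1}\cdot(\text{const})$. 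The cleanest form is
\[ \bar g^{\,t} L(x)\, g = \ii\sum_j x_j h_j h_{j+1} . \]
Then $|\bar g^t L(x) g| \le \Norm{L(x)}\,|g|^2$. Since $\sum_{j=1}^n h_j^2 = (n+1)/2$, this yields $\sum_j x_jh_jh_{j+1} \le (n+1)/2$.

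It remains to match constants, i.e. to show $(n+1)/2 = c^*\sum_j h_jh_{j+1}$. This follows from the eigen-relation $h_{j-1}+h_{j+1} = 2\cos\theta\, h_j$: summing $h_j(h_{j-1}+h_{j+1})$ over $j$ gives $2\sum_j h_jh_{j+1} = 2\cos\theta\sum_j h_j^2$, hence $\sum_j h_jh_{j+1} = \cos\theta\,(n+1)/2$, and therefore $(n+1)/2 = c^*\sum_j \varpi_j$. The two bounds agree.

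The main obstacle is the sign bookkeeping in $\bar g^t L(x) g$. Each term contributes $x_j(\overline{g_j}g_{j+1} - \overline{g_{j+1}}g_j)$, and $\overline{\ii^{j-1}}\,\ii^{j} = \ii$. So the contribution is $x_jh_jh_{j+1}(\ii - \bar\ii) = 2\ii x_jh_jh_{j+1}$, which is uniform in $j$; this doubles the sum. With the factor of $2$ included, the bound becomes $2\sum_j x_jh_jh_{j+1} \le (n+1)/2\cdot\Norm{L(x)}$, which would give half the claimed value. So the normalization has to be redone carefully: either the correct identity is $\sum_j h_j^2 = (n+1)/2$ paired with $\bar g^tL g$ counting both orientations, or the Rayleigh bound must be taken with $|g|^2$ appropriately. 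The real-vector form $y^tL(x)z$ with $|y||z|$ in place of $|g|^2$ is where we expect the constant to come out right, since $|y|^2 + |z|^2 = |g|^2$ and $|y||z| \le |g|^2/2$, and we would verify this first.
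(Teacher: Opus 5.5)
Your strategy works, but as written the proposal does not close. The factor-of-$2$ obstacle in your last paragraph is not real; it comes from an arithmetic slip in your constant-matching step. From the identity $\sum_{j=1}^{n-1}h_jh_{j+1} = \frac{(n+1)\cos\theta}{2}$, which you derived correctly, one gets
\[ c^*\sum_{i=1}^n\varpi_i = \frac{1}{2\cos\theta}\cdot\frac{(n+1)\cos\theta}{2} = \frac{n+1}{4}, \]
not $\frac{n+1}{2}$. Your corrected computation $\bar g^{\,t}L(x)g = 2\ii\sum_j x_jh_jh_{j+1}$ is right. Combined with $\lvert g\rvert^2 = \sum_{j=1}^n h_j^2 = \frac{n+1}{2}$, it gives $\sum_j x_jh_jh_{j+1} \le \frac{n+1}{4}$ whenever $\Norm{L(x)}\le 1$. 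This is exactly the value attained by $x = c^*\mathbf{1}$. An upper bound equal to half of a value you had just shown to be attained was itself a signal that a constant was miscomputed.

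The real form gives the same result with no renormalization. Term by term, $y^tL(x)z = \sum_{j=1}^{n-1}x_jh_jh_{j+1}$ exactly, since the cases $j$ odd and $j$ even both contribute $+h_jh_{j+1}$. Also $\lvert y\rvert\lvert z\rvert \le \frac12(\lvert y\rvert^2+\lvert z\rvert^2) = \frac{n+1}{4}$. With that correction your two bounds coincide, and they also yield the closed form $d^{D'}(1,2) = \frac{n+1}{4}$ of Remark \ref{rem:simplification}.

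Once repaired, your route is genuinely different from the paper's. The paper does not argue duality directly. It feeds the weights into the machinery of \cite[Section 3]{CLPS25}:
\begin{itemize}
\item it computes the vectors $\mu,\nu$ and the matrix $T(z)$ for $z = c^*(\varpi_2,\dots,\varpi_n)$;
\item it checks that the normalized odd and even parts of $(h_1,\dots,h_n)$ are mapped to each other by $T(z)$ and $T(z)^t$;
\item it then invokes \cite[Proposition 3.19, Lemma 3.5]{CLPS25}.
\end{itemize}
Your vectors $y,z$ are those same odd and even parts, up to alternating signs. So your argument is in effect a self-contained proof of that optimality certificate in this special case. It consists of weak duality, $\sum_j x_jh_jh_{j+1} = y^tL(x)z \le \Norm{L(x)}\lvert y\rvert\lvert z\rvert$, plus attainment at the constant vector supplied by Lemma \ref{lem:constantpath}. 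Yours is shorter, avoids external results, and produces the explicit value directly. The paper's places the example inside a general characterization valid for arbitrary weighted paths.
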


\begin{proof}
 We proceed using the results of \cite[Section 3]{CLPS25} applied to the path with vertices $u_1 = 2, u_2 = 3, \dots, u_n = 1$; its $j^{\text{th}}$ edge is $\{j+1, j+2\}$ (read modulo $n$), and we write $d_j' = \varpi_{j+1}^{-1}$ for its Dirac entry, so that the vector $d$ as in \cite[Definition 3.4]{CLPS25} is $(d_1', \cdots, d_{n-1}')$.  Since
 \[ \frac{d_j'}{d_{j+1}'} = \frac{\varpi_{j+2}}{\varpi_{j+1}} = \frac{\sin((j+1)\theta)\sin((j+2)\theta)}{\sin(j\theta)\sin((j+1)\theta)} = \frac{h_{j+2}}{h_j},\]
 the products defining the vectors $\mu, \nu \in \R^r$ of \cite[Definition 3.4]{CLPS25} telescope:
 \[ \mu_{k+1} = \prod_{j=1}^k \frac{d_{2j-1}'}{d_{2j}'} = \prod_{j=1}^k \frac{h_{2j+1}}{h_{2j-1}} = \frac{h_{2k+1}}{h_1}, \quad \nu_{k+1} = \prod_{j=1}^k \frac{d_{2j}'}{d_{2j+1}'} = \frac{h_{2k+2}}{h_2}.\]
 Thus $\mu$ and $\nu$ are proportional to $h_{\text{odd}} = (h_1, h_3, \dots, h_{n-1})$ and $h_{\text{even}} = (h_2, h_4, \dots, h_n)$, and the unit vectors of \cite[Definition 3.18]{CLPS25} are $x = h_{\text{odd}}/\Norm{h_{\text{odd}}}$ and $y = h_{\text{even}}/\Norm{h_{\text{even}}}$.  Moreover, since $n+1$ is odd, the map $j \mapsto n + 1 - j$ exchanges odd and even indices, so $h_{n+1 - j} = \sin(\pi - j\theta) = h_j$, and hence $\Norm{h_{\text{odd}}} = \Norm{h_{\text{even}}}$.

 Let $z \in \R^{n-1}$ have entries $z_j = c^*/d_j' = c^* \varpi_{j+1} > 0$, so that $z_jd_j' = c^*$ for every $j$.  The matrix $T(z)$ as defined in \cite[Definition 3.1]{CLPS25} is the $r \times r$ matrix
 \[ T(z) = \begin{bmatrix} d_1 z_1 & & \\ d_2 z_2 & d_3 z_3 & \\ & d_4 z_4 & d_5 z_5 & \\ & & & \ddots \end{bmatrix};\]
 here, it is the $r \times r$ matrix with $c^*$ on the diagonal and subdiagonal and $0$ elsewhere.  Since $h_0 = h_{n+1} = 0$, $h_{j-1} + h_{j+1} = 2h_j\cos\theta$, and $2c^*\cos \theta = 1$, for $1 \le k \le r$, we have
 \[ \left( T(z) h_{\text{even}}\right)_k = c^*(h_{2k - 2} + h_{2k}) = h_{2k-1}, \quad \left( T(z)^t h_{\text{odd}}\right)_k = c^*(h_{2k -1} + h_{2k + 1}) = h_{2k}.\]
 Since $\Norm{h_{\text{odd}}} = \Norm{h_{\text{even}}}$, this says $T(z)y = x$ and $T(z)^tx = y$.  As $z$ has no zero entries, it is viable in the sense of \cite[Definition 3.8]{CLPS25}, so, by \cite[Proposition 3.19]{CLPS25}, $d^{D'}(u_1, u_n) = \Norm{\mu}\Norm{\nu}/d_1'$.  Finally, by \cite[Lemma 3.5]{CLPS25},
 \[ d_1' \sum_{j=1}^{n-1} z_j = \mu^t T(z) \nu = \Norm{\mu}\Norm{\nu} x^t T(z) y = \Norm{\mu}\Norm{\nu}x^t x = \Norm{\mu}\Norm{\nu},\]
 so \[d^{D'}(1,2) = d^{D'}(u_1, u_n) = \sum_{j = 1}^{n-1} z_j = c^* \sum_{i = 2}^n \varpi_i,\]
 and the result follows since $\varpi_1 = 0$.
\end{proof}

\begin{remark}
 \label{rem:simplification}
 In fact, $d^{D'}(1,2) = (n+1)/4$ in Lemma \ref{lem:sine}.  Indeed, since
 \[ \sin((i-1)\theta)\sin(i\theta) = \frac{1}{2}\left(\cos \theta - \cos((2i-1)\theta) \right)\]
 and
 \[ 2\sin\theta \sum_{i = 1}^n \cos((2i-1)\theta) = \sum_{i=1}^n \left(\sin(2i\theta) - \sin((2i -2)\theta) \right) = \sin(2n\theta) = -\sin(2\theta),\]
 we have
 \[ \sum_{i=1}^n \cos((2i - 1)\theta) = - \cos\theta, \]
 and hence
 \[ \sum_{i=1}^n \varpi_i = \frac{n\cos\theta}{2} - \frac{1}{2}\sum_{i=1}^n \cos((2i-1)\theta) = \frac{n\cos \theta}{2} + \frac{\cos \theta}{2} = \frac{(n+1)\cos\theta}{2},\]
 and, therefore,
 \[ d^{D'}(1,2) = c^* \sum_{i = 1}^n \varpi_i = \frac{n+1}{4}.\]
\end{remark}

\begin{proposition}
 \label{prop:witness}
 Let $\delta > 0$ and define $\gamma \colonequals c^*(1 + \delta)$.  If $\mathbf{1} \in \R^{n-1}$, $\chi(\delta) \colonequals -\Xi_r\left(L(\gamma \mathbf{1}) \right)$, $t(\delta) \colonequals \chi(\delta)/\gamma^{n-1}$, and the skew-symmetric matrix $M = (m_{ij}) \in M_n(\R)$ is defined by $m_{12} = t(\delta)$, $m_{23} = m_{34} = \cdots = m_{n,1} = -\gamma$, and $m_{i,j} = 0$ otherwise, then for sufficiently small $\delta > 0$, $t(\delta) > 0$ and $\Norm{M} < 1$.
\end{proposition}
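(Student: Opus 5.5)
The approach is to compare $M$ with the matrix $M'$ obtained by deleting the edge $\{1,2\}$, and to verify $\Norm{M}<1$ through the quantities $\Xi_k$ of Lemma \ref{lem:normMandXi}. Deleting $m_{12}$ from $M$ leaves the path matrix $M'$, which equals $-L(\gamma\mathbf{1})$ up to the simultaneous reindexing along $u_1=2,\dots,u_n=1$. Hence $M'$ has the same singular values as $L(\gamma\mathbf{1})$. By Lemma \ref{lem:constantpath} these are $(1+\delta)$ and $(1+\delta)s_k$ for $2\le k\le r$, where $s_k=\cos(k\theta)/\cos\theta<1$. Consequently $\Xi_k(M')=\Xi_k(L(\gamma\mathbf{1}))$ for all $k$, and
\[ \chi(\delta)=-\Xi_r(M')=\bigl((1+\delta)^2-1\bigr)\prod_{k=2}^r\bigl(1-(1+\delta)^2s_k^2\bigr).\]
For small $\delta>0$ every factor in the product is positive, so $\chi(\delta)>0$. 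In fact $\chi(\delta)=2C\delta+O(\delta^2)$ with $C=\prod_{k\ge2}(1-s_k^2)>0$. This gives $t(\delta)>0$ and $t(\delta)=O(\delta)$.

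Next I express $\Xi_k(M)$ in terms of $\Xi_k(M')$ using the relations established before Proposition \ref{prop:signcrit}. Let $S=\{3,\dots,n\}$. For $1\le k<r$,
\[ \Xi_k(M)=\Xi_k(M')-t^2\,\Xi_{k-1}(M_S).\]
For $k=r$, equation \eqref{eq:Xir} with $r$ even, $m_{12}=t$, and $m_{23}\cdots m_{n,1}=(-\gamma)^{n-1}=-\gamma^{n-1}$ gives
\[ \Xi_r(M)=\Xi_r(M')+2t\gamma^{n-1}-t^2\Xi_{r-1}(M_S)=-\chi+2\chi-t^2\Xi_{r-1}(M_S)=\chi(\delta)-t(\delta)^2\,\Xi_{r-1}(M_S).\]
The entries of $M_S$ are bounded by $\gamma\le 2c^*$, so $\Xi_{r-1}(M_S)$ is bounded uniformly for small $\delta$. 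Hence $\Xi_r(M)=2C\delta+O(\delta^2)>0$ for sufficiently small $\delta>0$. This comparison of orders, with $\chi$ linear in $\delta$ and the correction quadratic, is the crux. It is exactly where the choice $t=\chi/\gamma^{n-1}$ matters: it makes the cross term $2t\gamma^{n-1}$ exactly cancel $-\chi$ twice over.

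For $1\le k<r$, take $\delta\to0$. The singular values of $M'$ tend to $1,s_2,\dots,s_r$, so $\Xi_k(M')\to e_k(0,1-s_2^2,\dots,1-s_r^2)$. This limit is positive because $k\le r-1$ and every monomial avoiding the first variable has positive value. Meanwhile $t^2\Xi_{k-1}(M_S)\to0$. By continuity, $\Xi_k(M)>0$ for all $k$ once $\delta$ is small.

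Finally I conclude $\Norm{M}<1$ with the strict version of the argument in Lemma \ref{lem:normMandXi}. Consider $p(x)=\prod_j\bigl(x-(1-\sigma_j^2)\bigr)$, where $\sigma_j$ are the singular values of $M$. When all $\Xi_k(M)>0$, $p$ has sign $(-1)^r$ on $(-\infty,0]$; at $x=0$ this value is $(-1)^r\Xi_r(M)\neq0$. So every root $1-\sigma_j^2$ is strictly positive, and $\Norm{M}<1$.
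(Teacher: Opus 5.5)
Your proof is correct. Its core matches the paper's: you compute $\chi(\delta)=((1+\delta)^2-1)\prod_{k\ge 2}(1-(1+\delta)^2s_k^2)$ in the same way, and you use \eqref{eq:Xir} in the same way to get $\Xi_r(M)=\chi(\delta)-t(\delta)^2\,\Xi_{r-1}(M_S)>0$. In both arguments the choice $t=\chi/\gamma^{n-1}$ is what flips the sign of $\Xi_r$.

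You diverge in how you rule out that $\Xi_r(M)>0$ comes from an even number of negative factors $1-s_j(M)^2$.

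\textbf{The paper's route.} It uses a perturbation argument. By Weyl's inequality applied to the Hermitian matrix $\ii M$, the second singular value $s_2$ is $1$-Lipschitz in the operator norm. The matrix $M$ lies within $t(\delta)+\delta$ of $L(-c^*\mathbf{1})$, whose second singular value is $\cos(2\theta)/\cos\theta<1$. Hence every factor with $j\ge 2$ is positive, and $\Xi_r(M)>0$ forces $s_1(M)<1$.

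\textbf{Your route.} You check $\Xi_k(M)>0$ for every $k<r$ directly. You use the algebraic deletion identity $\Xi_k(M)=\Xi_k(M')-t^2\,\Xi_{k-1}(M_S)$, which needs no norm hypothesis because proper vertex subsets of the cycle induce unions of paths. You combine it with the explicit limit $e_k(0,1-s_2^2,\dots,1-s_r^2)>0$. Then you apply a strict form of the criterion in Lemma \ref{lem:normMandXi}.

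\textbf{Trade-off.} Your argument stays entirely inside the $\Xi_k$/Pfaffian framework and needs no external eigenvalue inequality. The paper's argument needs only the single quantity $\Xi_r(M)$ plus a one-line norm estimate, at the cost of importing Weyl's inequality.

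Your last step could also be shortened. Once $\Xi_k(M)\ge 0$ for all $k$, Lemma \ref{lem:normMandXi} already gives $\Norm{M}\le 1$. Then $\Xi_r(M)=\prod_j(1-s_j(M)^2)>0$ makes every factor strictly positive.
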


\begin{proof}
 Let $M$ be as in the statement, $M' = L(-\gamma \mathbf{1})$ be the matrix obtained from $M$ by deleting the edge $\{1,2\}$, and let $M_S$ be the compression of $M$ to the set $S = \{3, \dots, n\}$; note that neither $M'$ nor $M_S$ depend on $t(\delta)$.  Since
 \[ m_{12}m{23} \cdots m_{n,1} = t(\delta) \cdot (-\gamma)^{n-1} = -t(\delta) \cdot \gamma^{n-1},\]
 the identity \eqref{eq:Xir} yields
 \begin{equation}
  \label{eq:Xir2}
  \Xi_r(M) = \Xi_r(M') - t(\delta)^2\Xi_{r-1}(M_S) + (-1)^r 2t(\delta)\gamma^{n-1}.
 \end{equation}
 By Lemma \ref{lem:constantpath}, the $k^\text{th}$ singular value of $M'$ is $s_k(M') = (1 + \delta)\cos(k\theta)/\cos(\theta)$, so, by the definition of $\Xi_r$, we have
 \[ \chi(\delta) = -\Xi_r(M') = -(1 - (1 + \delta)^2) \prod_{k=2}^r \left( 1 - (1 + \delta)^2\frac{\cos^2(k\theta)}{\cos^2 \theta}\right).\]
 The first factor is $-(1 - (1 + \delta)^2) = 2\delta + \delta^2$ and the others tend to $1 - \cos^2(k\theta)/\cos^2(\theta) > 0$ as $\delta \to 0$.  Hence, for small $\delta$, $\chi(\delta) > 0$ and so $t(\delta) > 0$; moreover, $\chi(\delta) \to 0$ and $t(\delta) \to 0$. Furthermore, \eqref{eq:Xir2} becomes
 \[ \Xi_r(M) = -\chi(\delta) + 2\chi(\delta) - t(\delta)^2 \Xi_{r-1}(M_S) = \chi(\delta) \left(1 - \frac{\chi(\delta)\Xi_{r-1}(M_S)}{\gamma^{2(n-1)}} \right),\]
 and $(1 - \chi(\delta)\Xi_{r-1}(M_S)/\gamma^{2(n-1)} \to 1$ as $\delta \to 0$, so $\Xi_r(M) > 0$ for small $\delta$.

 Finally, if $X$ is skew-symmetric, then $\ii X$ is Hermitian, and applying Weyl's inequality \cite[Theorem 4.3.1]{HJ13} to $\ii X$, whose second largest eigenvalue is the second largest singular value $s_2(X)$ of $X$, we see that the map $s_2$ sending $X$ to its second largest singular value is $1$-Lipschitz for the operator norm.  Since $M - L(-c*\mathbf{1}) = t(\delta)\left(E_{1,2} - E_{2,1}\right) + L(-c^*\delta\mathbf{1})$ has norm at most $t(\delta) + 2c^*\delta \cos \theta = t(\delta) + \delta$, Lemma \ref{lem:constantpath} gives
 \[ s_2(M) \le \frac{\cos(2\theta)}{\cos \theta} + t(\delta) + \delta < 1\]
 for small $\delta$.  Then, for $j \ge 2$ the factors $1 - s_j(M)^2$ of $\Xi_r(M) = \prod_{j = 1}^r \left(1 - s_j(M)^2\right)$ are positive, and so $\Xi_r(M) > 0$ forces $1 - s_1^2(M) > 0$, i.e., $\Norm{M} < 1$.
\end{proof}

\begin{theorem}
 \label{thm:0mod4}
 Let $n \ge 4$ be divisible by $4$.  There exists a choice of weights for the $n$-cycle such that, if $D$ denotes the Dirac operator of the $n$-cycle, $e$ is the edge $\{1,2\}$, and $D'$ is the Dirac operator obtained by deleting $e$, then
 \[ d^{D'}(1,2) < \dD(1,2).\]
\end{theorem}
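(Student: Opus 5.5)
We combine Proposition \ref{prop:witness}, Lemma \ref{lem:weighting}, and Lemma \ref{lem:sine}. Fix $\delta>0$ small enough that the conclusions of Proposition \ref{prop:witness} hold. Let $M$ be the matrix constructed there, so that $t=t(\delta)>0$ and $\Norm{M}<1$. We will choose weights on the edges $\{j,j+1\}$ with $2\le j\le n$ to be $w_{j,j+1}=\varpi_j$; these are positive for $2\le j\le n$. Then Lemma \ref{lem:sine} together with Remark \ref{rem:simplification} gives $d^{D'}(1,2)=c^*\sum_{i=1}^n\varpi_i=(n+1)/4$. Note that this value does not depend on $w_{12}$, since deleting $e$ removes that weight entirely.

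Next we choose $w_{12}$ so that the closure condition of Lemma \ref{lem:weighting} holds. The condition is $\sum_i w_{i,i+1}m_{i,i+1}=0$, that is,
\[ w_{12}\,t-\gamma\sum_{j=2}^n\varpi_j=0, \qquad\text{so}\qquad w_{12}=\frac{\gamma}{t}\sum_{j=2}^n\varpi_j>0. \]
Since $\Norm{M}\le 1$, Lemma \ref{lem:weighting} produces an admissible $A$ with $[A,D]=M$. It also gives
\[ \dD(1,2)\ge w_{12}m_{12}=\gamma\sum_{j=2}^n\varpi_j=(1+\delta)\,c^*\sum_{i=1}^n\varpi_i=(1+\delta)\frac{n+1}{4}, \]
using $\varpi_1=0$. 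This is strictly larger than $d^{D'}(1,2)=(n+1)/4$, which proves the claim.

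The main thing to check is consistency of indices and signs. The matrix $M$ of Proposition \ref{prop:witness} has entries $-\gamma$ on the path edges, ordered along $2-3-\cdots-n-1$. Lemma \ref{lem:sine} assigns weight $\varpi_j$ to the edge $\{j,j+1\}$, including $\{n,1\}$ with weight $\varpi_n$. The potentially delicate point is that $t(\delta)$ is tiny, which makes $w_{12}$ huge. This is harmless: Lemma \ref{lem:weighting} only needs $\Norm{M}\le 1$ and the linear closure relation, and both have been verified. No step requires new estimates; all the real work was already done in Proposition \ref{prop:witness}, whose hypothesis $4\mid n$ (so $r$ is even) enters through the sign $(-1)^r$ in \eqref{eq:Xir2}.
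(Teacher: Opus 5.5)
Your proposal is correct and is essentially the paper's own proof. It uses the same weights $w_{i,i+1}=\varpi_i$ for $2\le i\le n$, and the same choice $w_{12}=\gamma\, t(\delta)^{-1}\sum_{i=2}^n\varpi_i$ forced by the closure condition of Lemma~\ref{lem:weighting}. It then makes the same comparison $\dD(1,2)\ge(1+\delta)(n+1)/4>(n+1)/4=d^{D'}(1,2)$, via Proposition~\ref{prop:witness}, Lemma~\ref{lem:sine} and Remark~\ref{rem:simplification}.
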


\begin{proof}
 Let $\delta > 0$ and define $\gamma$ and $t(\delta)$ as in the statement of Proposition \ref{prop:witness}.  Define weights $w_{j,j+1}$ for the edge $\{j, j+1\}$ of the $n$-cycle by $w_{i,i+1} = \varpi_i = \sin((i-1)\theta)\sin(i\theta)$ for $2 \le i \le n$ and
 \[w_{12} = \frac{c^*(1+\delta)}{t(\delta)}\sum_{i = 2}^n \varpi_i = \frac{(1 + \delta)(n+1)}{4t(\delta)},\]
 where the last equality follows from Remark \ref{rem:simplification}.  Define $M = (m_{i,j})$ as in the statement of Proposition \ref{prop:witness}.  Then, for sufficiently small $\delta > 0$, $\Norm{M} < 1$, and, by the choice of $w_{12}$,
 \[ \sum_{i = 1}^n w_{i,i+1} m_{i,i+1} = w_{12} t(\delta) - \gamma \sum_{i = 2}^n \varpi_i = \frac{(1 + \delta)(n+1)}{4} - \frac{(1 + \delta)(n+1)}{4} = 0.\]  Thus, by Lemma \ref{lem:weighting}, there exists an admissible diagonal matrix $A$ such that $[A,D] = M$, and, moreover,
 \[ \dD(1,2) \ge w_{12} m_{12} = \frac{(1 + \delta)(n+1)}{4} > \frac{n+1}{4} = d^{D'}(1,2),\]
 where the last equality follows from Remark \ref{rem:simplification}.  The result follows.
\end{proof}


\section{Deletion of vertices or edges that are far away can affect noncommutative distance}
\label{sect:far}

As we have seen, deleting an edge can affect the noncommutative distance between vertices $x$ and $y$.  In all examples up to this point, the deleted edge was $\{x,y\}$.  We will show in this section that the deleted edge can actually be ``far'' from both $x$ and $y$ in both the sense of weights and of graph-theoretic distance.

\subsection{When edge deletion has no effect.}

The following is a formal consequence of \cite[Theorem 4, (9)]{Bes21}; we note it because it constrains the structure of counterexamples to \eqref{eq:question}.

\begin{proposition}
 \label{prop:safe}
 Let $e \in E$, $x, y \in V$, $D$ be a Dirac operator for $G$, and $D'$ the Dirac operator for $G - e$ obtained after deleting $e$.  Suppose that, after pruning, the path space $\PP_{G - e}(x,y)$ is an induced subgraph of $G$.  Then, $d^{D'}(x,y) \ge \dD(x,y)$.
\end{proposition}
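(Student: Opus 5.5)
The plan is to chain together the pruning result (Proposition \ref{prop:prune}) and the induced-subgraph monotonicity (Proposition \ref{prop:deletion}). Write $H \colonequals \PP_{G-e}(x,y)$ for the subgraph of $G-e$ induced on the union of all simple $x$--$y$ paths in $G-e$. If $x$ and $y$ lie in different components of $G-e$, then $d^{D'}(x,y)=+\infty$ and there is nothing to prove. So assume $x \neq y$ are connected in $G-e$; the case $x=y$ is trivial.

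\textbf{Step 1: prune in $G-e$.} Applying Proposition \ref{prop:prune} to $D'$ gives
\[ d^{D'}(x,y) = d^{D'_H}(x,y), \]
where $D'_H$ is the compression of $D'$ to the vertex set $V(H)$.

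\textbf{Step 2: compare compressions.} Use the hypothesis that $H$ is an induced subgraph of $G$.
\begin{itemize}
\item Since $H$ is induced in $G$, no edge of $G$ joins two vertices of $V(H)$ unless it is already an edge of $H$. In particular $e$ does not have both endpoints in $V(H)$.
\item The entries of $D$ and $D'$ differ only at the positions of $e$. Those positions therefore lie outside the principal submatrix on $V(H)$.
\item Consequently the compression of $D$ to $V(H)$ equals $D'_H$, i.e.\ $D_{V(H)} = D'_H$.
\end{itemize}

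\textbf{Step 3: monotonicity.} Applying Proposition \ref{prop:deletion} to $D$ and the vertex subset $V' = V(H)$ gives
\[ d^{D}_{V(H)}(x,y) \ge \dD(x,y). \]
Combining the three steps,
\[ d^{D'}(x,y) = d^{D'_H}(x,y) = d^{D}_{V(H)}(x,y) \ge \dD(x,y). \]

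The only point needing care is the meaning of ``after pruning.'' We interpret it as saying that the vertex set of $\PP_{G-e}(x,y)$ induces in $G$ exactly the graph $\PP_{G-e}(x,y)$. The key check is then that $e$ cannot have both endpoints in $V(H)$, which is precisely what Step 2 uses. There is no real obstacle beyond this bookkeeping; the statement is essentially a formal composition of the two cited propositions.
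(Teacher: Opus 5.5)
Your proposal is correct and matches the paper's proof. Both prune in $G-e$ via Proposition~\ref{prop:prune} and then apply Proposition~\ref{prop:deletion} to the induced subgraph $\PP_{G-e}(x,y)$ of $G$; your Step~2 (that $D$ and $D'$ have the same compression to $V(H)$ because $e$ cannot have both endpoints there) makes explicit what the paper leaves implicit here and spells out later in the proof of Corollary~\ref{cor:necessary}.
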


\begin{proof}
 By Proposition \ref{prop:prune}, $d^{D'}(x,y) = d^{D''}(x,y)$, where $D''$ is induced on $\PP_{G-e}(x,y)$.  By Proposition \ref{prop:deletion}, the distance on an induced subgraph of $G$ is at least $\dD(x,y)$.
\end{proof}

\begin{corollary}
 \label{cor:necessary}
 Let $e \in E(G)$, $x, y \in V(G)$, $D$ be a Dirac operator for $G$, and $D'$ the Dirac operator for $G - e$ obtained after deleting $e$.  If $d^{D'}(x,y) < \dD(x,y)$, then both endpoints of $e$ lie in $\PP_{G-e}(x,y)$.
\end{corollary}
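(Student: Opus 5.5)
This follows from Proposition \ref{prop:safe} by contraposition, with one structural observation. Write $e=\{p,q\}$ and $H=\PP_{G-e}(x,y)$. Since $H$ is the subgraph of $G-e$ induced on its vertex set, and the only edge of $G$ missing from $G-e$ is $e$, the induced subgraph $G[V(H)]$ coincides with $H$ unless both $p,q\in V(H)$. In that exceptional case $G[V(H)]=H+e$.

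So I will assume, toward a contradiction, that at least one endpoint of $e$ does not lie in $V(H)$. Then $G[V(H)]=(G-e)[V(H)]=H$, so $H$ is an induced subgraph of $G$. Proposition \ref{prop:safe} then gives $d^{D'}(x,y)\ge \dD(x,y)$, contradicting the hypothesis $d^{D'}(x,y)<\dD(x,y)$. Hence both endpoints of $e$ lie in $\PP_{G-e}(x,y)$.

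Two details need care, and I expect them to be the only potential obstacles.
\begin{itemize}
\item \emph{Degenerate cases.} If $x=y$, then $\dD(x,y)=0$, so the hypothesis cannot hold. If $x$ and $y$ are disconnected in $G-e$, then $d^{D'}(x,y)=+\infty$, so again the hypothesis cannot hold. Thus $H$ is a well-defined nonempty induced subgraph of $G-e$ and pruning applies.
\item \emph{Dirac operators agree.} The Dirac operator induced on $H$ from $D'$ must agree with the one induced from $D$ on $G[V(H)]$. This holds because $D$ and $D'$ differ only in the $(p,q)$ and $(q,p)$ entries, and these entries are not both indexed inside $V(H)$. Consequently, Proposition \ref{prop:deletion}, applied through Proposition \ref{prop:safe}, compares the correct operators.
\end{itemize}
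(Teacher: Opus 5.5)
Your proposal is correct and follows the paper's proof: if $e$ does not have both endpoints in $S=V(\PP_{G-e}(x,y))$, then $(G-e)[S]=G[S]$ (equivalently, the compressions $D'_S$ and $D_S$ coincide), so $\PP_{G-e}(x,y)$ is an induced subgraph of $G$ and Proposition~\ref{prop:safe} gives $d^{D'}(x,y)\ge \dD(x,y)$. Your treatment of the degenerate cases ($x=y$, or $x$ and $y$ disconnected in $G-e$) is a harmless addition that the paper leaves implicit.
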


\begin{proof}
 Let $S$ be the vertex set of $\PP_{G-e}(x,y)$, so that $\PP_{G-e}(x,y) = (G - e)[S]$.  Since $D$ and $D'$ differ only in the two entries corresponding to $e$, if $e$ does not have both endpoints in $S$, then the compressions $D_S'$ and $D_S$ coincide. Equivalently, $(G - e)[S] = G[S]$, so $\PP_{G-e}(x,y)$ is an induced subgraph of $G$.  The result follows from Proposition \ref{prop:safe}.
\end{proof}

\begin{example}
 If $G$ is the $4$-cycle $1 - 2 - 3 - 4 - 1$, the deletion of $e = \{1,2\}$ leaves $\PP_{G-e}(1,3)$ equal to the path $1 - 4 - 3$, which does not contain the vertex $2$; similarly, $\PP_{G-e}(1,4)$ does not contain $2$, $\PP_{G-e}(2,3)$ and $\PP_{G-e}(2,4)$ do not contain $1$, and $\PP_{G-e}(3,4)$ contains neither $1$ nor $2$.  Hence, by Corollary \ref{cor:necessary}, the distance can only possibly decrease between the vertices $1$ and $2$, i.e., in a $4$-cycle, the distance between two vertices can only possibly decrease between the two endpoints of the deleted edge.
\end{example}

\subsection{Deletion of edges far away in the graph-theoretic metric}

We will now study the case when the deleted edge is far away with respect to graph-theoretic distance, i.e., the number of edges.

\begin{proposition}
 \label{prop:pendant}
 Let $G$ be a weighted graph, $e \in E(G)$, $x, y \in V(G)$, $D$ the Dirac operator of $G$, $D'$ the Dirac operator of $G - e$, and suppose
 \[ \dD(x,y) - d^{D'}(x,y) > 0.\]
 Let $G^+$ be obtained from $G$ by attaching a path $P_x$ at $x$, ending at a new vertex $x'$, and a path $P_y$ at $y$, ending at a new vertex $y'$, where all vertices of $P_x$ and $P_y$ other than $x$ and $y$ are new and distinct; let $\delta_x$ and $\delta_y$ denote the total weights of $P_x$ and $P_y$, let $D^+$ be the Dirac operator of $G^+$, and let $D''$ be the Dirac operator of $G^+ - e$.  If $\delta_x + \delta_y < \dD(x,y) - d^{D'}(x,y)$, then
 \[ d^{D+}(x', y') \ge \dD(x,y) > d^{D'}(x,y) + \delta_x + \delta_y \ge d^{D''}(x',y').\]
 In particular, \eqref{eq:question} also fails for $G^+$, $e$, and the pair $(x', y')$.
\end{proposition}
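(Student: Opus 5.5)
The chain of inequalities splits into three pieces, and I would prove them separately: the left inequality $d^{D^+}(x',y') \ge \dD(x,y)$, the middle strict inequality, and the right inequality $d^{D'}(x,y) + \delta_x + \delta_y \ge d^{D''}(x',y')$. The middle one is just a rearrangement of the hypothesis $\delta_x + \delta_y < \dD(x,y) - d^{D'}(x,y)$. The final claim that \eqref{eq:question} fails then follows immediately, since the chain gives $d^{D^+}(x',y') > d^{D''}(x',y')$.

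For the left inequality, I would extend an admissible matrix for $D$ along the pendant paths. First fix $A$ admissible for $D$ with $|a_x - a_y|$ as close to $\dD(x,y)$ as desired; a maximizer exists by compactness, as in the proof of Theorem \ref{thm:2mod4}, so we may take equality. Next define $A^+$ by setting $a^+_v = a_v$ for all $v \in V(G)$, and $a^+_v = a_x$ for every vertex $v$ of $P_x$, and similarly $a^+_v = a_y$ on $P_y$. Then $[A^+,D^+]$ is $[A,D]$ padded by zeros, because every new edge joins vertices with equal diagonal entries. Its norm is therefore $\Norm{[A,D]} \le 1$, so $A^+$ is admissible for $D^+$, and $d^{D^+}(x',y') \ge |a_x - a_y| = \dD(x,y)$.

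For the right inequality, I would use that the noncommutative distance is dominated by the geodesic distance and satisfies the triangle inequality (it is a supremum of $|a_u - a_v|$, hence a pseudometric). This gives
\[ d^{D''}(x',y') \le d^{D''}(x',x) + d^{D''}(x,y) + d^{D''}(y,y'). \]
Each pendant path consists of bridges of $G^+ - e$. By Proposition \ref{prop:prune}, $\PP(x',x)$ is the path $P_x$ itself, so Proposition \ref{prop:geod} (or $\dD\le\ell^D$ directly) gives $d^{D''}(x',x) \le \delta_x$; similarly $d^{D''}(y,y') \le \delta_y$.

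The remaining step, which I expect to be the only delicate point, is to show $d^{D''}(x,y) = d^{D'}(x,y)$. I would argue this with pruning: every simple path from $x$ to $y$ in $G^+ - e$ stays in $G - e$, because the attached paths are pendant and a simple path cannot enter one and return. Hence $\PP_{G^+-e}(x,y) = \PP_{G-e}(x,y)$, and Proposition \ref{prop:prune} yields equality. The one care point is that $e$ is an edge of $G$, so deleting it does not touch the pendant paths. Combining the three bounds gives $d^{D''}(x',y') \le d^{D'}(x,y) + \delta_x + \delta_y$, which completes the chain.
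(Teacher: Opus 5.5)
Your proposal is correct and follows essentially the same argument as the paper. Like the paper, you extend a maximizing admissible $A$ by constants along the pendant paths for the first inequality; for the last one, you use pruning to get $d^{D''}(x,y) = d^{D'}(x,y)$, the bound $d \le \ell$ on each pendant path, and the triangle inequality.
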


\begin{proof}
 The second inequality follows immediately from the hypotheses.  We consider next the first inequality.  Let $A = \diag(a)$ be admissible for $G$ with $a_x - a_y = \dD(x,y)$, and extend $a$ to $a^+$ by assigning the value $a_x$ to every vertex of $P_x$ and the value $a_y$ to every vertex of $P_y$, and set $A^+ = \diag(a^+)$.  Every new commutator entry is of the form $(a_u - a_v)d_{uv}$ with $a_u = a_v$ and hence vanishes, so $[A^+, D^+]$ is $[A,D]$ bordered by zero rows and columns, and furthermore $\Norm{[A^+, D^+]} = \Norm{[A,D]} \le 1$.  Since $a_{x'} - a_{y'} = a_x - a_y$, the claim follows from \eqref{eq:def}.

 For the third inequality, we note that no simple path of $G^+ - e$ from $x$ to $y$ enters $P_x$ or $P_y$, so $\PP_{G^+ - e}(x,y) = \PP_{G - e}(x,y)$ and, by Proposition \ref{prop:prune}, $d^{D''}(x,y) = d^{D'}(x,y)$.  Furthermore, by Proposition \ref{prop:geod}, $d^{D''}(x,x') \le \ell^{D''}(x,x') \le \delta_x$ and $d^{D''}(y,y') \le \ell^{D''}(y,y') \le \delta_y$.  The triangle inequality for $d^{D''}$ now yields $d^{D''}(x',y') \le \delta_x + d^{D'}(x,y) + \delta_y$, completing the proof.
\end{proof}

\begin{corollary}
 \label{cor:nonincident}
 There exists a weighted graph $G^+$ with Dirac operator $D^+$, edge $e$, and vertices $x,y$ not incident with $e$ such that, if $D''$ is the Dirac operator of $G^+ - e$, we have
 \[ d^{D^+}(x,y) > d^{D''}(x,y).\]
\end{corollary}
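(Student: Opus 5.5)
The plan is to apply Proposition \ref{prop:pendant} to the weighted $4$-cycle of Theorem \ref{thm:main}, taking $\ep = 1/5$. For this cycle $G$ (weights $w_{12} = 5$, $w_{23} = w_{34} = w_{41} = 1$), $e = \{1,2\}$, $x = 1$ and $y = 2$, Theorem \ref{thm:main} gives $\dD(1,2) \ge \sqrt{5}$. Propositions \ref{prop:prune} and \ref{prop:chain} give $d^{D'}(1,2) = 2$. The gap $\dD(1,2) - d^{D'}(1,2) \ge \sqrt{5} - 2 > 0$ is therefore strictly positive, which is the hypothesis Proposition \ref{prop:pendant} needs.

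Next I attach one pendant edge at each endpoint. Add a new vertex $1'$ joined to $1$ by an edge of weight $\delta_x$, and a new vertex $2'$ joined to $2$ by an edge of weight $\delta_y$. Choose $\delta_x = \delta_y = (\sqrt{5}-2)/4$, so that $\delta_x + \delta_y = (\sqrt{5}-2)/2 < \sqrt{5} - 2$. Each of $P_x$ and $P_y$ is then a path of length one ending at a new vertex, with total weights $\delta_x$ and $\delta_y$. Proposition \ref{prop:pendant} then yields
\[ d^{D^+}(1',2') \ge \dD(1,2) > d^{D'}(1,2) + \delta_x + \delta_y \ge d^{D''}(1',2'). \]

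It remains to check that neither $1'$ nor $2'$ is an endpoint of $e = \{1,2\}$. This holds because both are new vertices, distinct from $1$ and $2$. Renaming $1', 2'$ as $x, y$ gives the corollary.

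No step here is a real obstacle. The only care needed is to choose pendant weights whose sum stays strictly below the gap $\sqrt{5}-2$, and the explicit choice above does that.
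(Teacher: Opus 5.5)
Your proposal is correct and is essentially the paper's proof: both apply Proposition \ref{prop:pendant} to the $\ep = 1/5$ cycle of Theorem \ref{thm:main} with a single pendant edge attached at each of $1$ and $2$. The only difference is the pendant weights: the paper uses $1/10$ each, relying on $\sqrt{5}-2 > 1/5$, whereas you use $(\sqrt{5}-2)/4$ each.
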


\begin{proof}
 Let $G^+$ have vertices $x, 1, 2, 3, 4, y$ and weights $w_{x,1} = w_{2,y} = 1/10$, $w_{12} = 5$, and $w_{23} = w_{34} = w_{41} = 1$, and let $e = \{1,2\}$.  Consider first the induced subgraph $G$ on $\{1,2,3,4\}$; it is a $4$-cycle, and we will denote the Dirac operator of $G$ by $D$.  By Theorem \ref{thm:main} with $\ep = 1/5$, we have $\dD(1,2) \ge \sqrt{5}$ and $d^{D'}(1,2) = 2$, where $D'$ is the Dirac operator of $G - e$.  Noting that $\sqrt{5} - 2 > 1/5$ and that $P_x$, $P_y$ each consist of a single edge, we can apply Proposition \ref{prop:pendant} by giving the weight $\delta_x = \delta_y = 1/10$ to each of $\{x,1\}$ and $\{2,y\}$.  The result follows.

 \end{proof}

\begin{figure}[h]
\centering
\begin{tikzpicture}[scale=1.5]
  \node[vtx,label=left:{$x$}]   (x)  at (-1.2,1) {};
  \node[vtx,label=above:{$1$}]  (v1) at (0,1) {};
  \node[vtx,label=above:{$2$}]  (v2) at (1.7,1) {};
  \node[vtx,label=below:{$3$}]  (v3) at (1.7,0) {};
  \node[vtx,label=below:{$4$}]  (v4) at (0,0) {};
  \node[vtx,label=right:{$y$}]  (y)  at (2.9,1) {};
  \draw (x)--(v1) node[wt,midway,above]{$\tfrac1{10}$};
  \draw[del] (v1)--(v2) node[wt,midway,above]{$e$, $5$};
  \draw (v2)--(y) node[wt,midway,above]{$\tfrac1{10}$};
  \draw (v2)--(v3) node[wt,midway,right]{$1$};
  \draw (v3)--(v4) node[wt,midway,below]{$1$};
  \draw (v4)--(v1) node[wt,midway,left]{$1$};
\end{tikzpicture}
\medskip
\caption{Corollary~\ref{cor:nonincident}: deleting the dashed edge $e$, which is incident
to neither $x$ nor $y$, strictly decreases $d(x,y)$ (from $\ge\sqrt5$ to $\le\frac{11}5$).
Replacing the two pendant edges by paths of $N$ edges of weight $\frac1{10N}$ pushes $e$
arbitrarily far from $x$ and $y$ in the graph-theoretic metric
(Corollary~\ref{cor:far}).}
\label{fig:nonincident}
\end{figure}

\begin{corollary}
 \label{cor:far}
 For every positive integer $N$, there exists a weighted graph $G^+$ with Dirac operator $D^+$, edge $e$, and vertices $x,y$ at graph-theoretic distance at least $N$ from $e$ such that, if $D''$ is the Dirac operator of $G^+ - e$, we have
 \[ d^{D^+}(x,y) > d^{D''}(x,y).\]
\end{corollary}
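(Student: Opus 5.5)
The plan is to apply Proposition \ref{prop:pendant} to the weighted $4$-cycle of Theorem \ref{thm:main}, with the single pendant edges of Corollary \ref{cor:nonincident} replaced by long pendant paths of tiny total weight. Fix $N$. Let $G$ be the $4$-cycle $1-2-3-4-1$ with $w_{12}=5$ and $w_{23}=w_{34}=w_{41}=1$, and let $e=\{1,2\}$. By Theorem \ref{thm:main} with $\ep=1/5$, we have $\dD(1,2)\ge\sqrt5$ and $d^{D'}(1,2)=2$, so $\dD(1,2)-d^{D'}(1,2)\ge \sqrt5-2>1/5$.

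Next, build $G^+$ by attaching at vertex $1$ a path $P_x$ with $N$ edges ending at a new vertex $x$, and at vertex $2$ a path $P_y$ with $N$ edges ending at a new vertex $y$. All interior vertices are new and distinct, and every edge of $P_x$ and $P_y$ gets weight $1/(10N)$. Then $\delta_x=\delta_y=1/10$, so $\delta_x+\delta_y=1/5<\dD(1,2)-d^{D'}(1,2)$. Proposition \ref{prop:pendant}, applied with $x$ and $y$ there equal to $1$ and $2$ and with $x'=x$, $y'=y$, gives
\[ d^{D^+}(x,y)\ge \dD(1,2)\ge\sqrt5>\tfrac{11}{5}\ge d^{D'}(1,2)+\delta_x+\delta_y\ge d^{D''}(x,y), \]
which is the desired strict inequality.

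It remains to check the graph-theoretic distance. In $G^+$, the only vertices adjacent to a vertex of $P_x$ outside $P_x$ are those adjacent to the attachment vertex $1$, and similarly for $P_y$ at vertex $2$. Hence every path from $x$ to an endpoint of $e$ (namely $1$ or $2$) must first traverse all $N$ edges of $P_x$ to reach $1$, so $x$ has distance $N$ to $1$ and at least $N$ to $2$. Symmetrically, $y$ has distance at least $N$ from both endpoints of $e$. So the distance from $e$ to each of $x$ and $y$ is at least $N$.

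There is no real obstacle here. The only points needing care are that Proposition \ref{prop:pendant} allows pendant paths of arbitrary length (only their total weight matters), and that the distance count above is correct. Both follow directly from the construction.
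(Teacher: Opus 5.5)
Your proposal is correct and is essentially the paper's proof: the paper likewise replaces each pendant edge in the construction of Corollary~\ref{cor:nonincident} by a path of $N$ edges of weight $\frac{1}{10N}$, which leaves $\delta_x=\delta_y=1/10$ unchanged, so Proposition~\ref{prop:pendant} applies as before. Your explicit check that any path from $x$ or $y$ to an endpoint of $e$ must traverse the whole pendant path just writes out the distance claim that the paper leaves implicit.
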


\begin{proof}
 In the proof of Corollary \ref{cor:nonincident}, we replace each pendant edge by a path of $N$ edges of weight $\frac{1}{10N}$ each.  The total weights are unchanged, and so Proposition \ref{prop:pendant} still applies.
\end{proof}

\begin{remark}
 The hypothesis $\delta_x + \delta_y < \dD(x,y) - d^{D'}(x,y)$ of Proposition \ref{prop:pendant} means that the deleted edge $e$ in Corollaries \ref{cor:nonincident} and \ref{cor:far} remains close to $x$ and $y$ in the weighted metric.  We are able to remove this restriction in Subsection \ref{ss:farweight} at the cost of adding more vertices and letting the maximum degree of the graph grow without bound.
\end{remark}

\subsection{Failure of locality.}

In \cite[Section 5]{Bes21}, it is posited that a vertex $z$ that is geodesically very far from $x$ and $y$ should not matter much, if at all, in the computation of $\dD(x,y)$.  This subsection is devoted to showing that this is not actually the case.  We begin with a lemma that is key to these computations.

\begin{lemma}[blow up]
 \label{lem:blowup}
 Let $H$ be a finite simple graph with vertices $h_0, \dots, h_r$ and Dirac entries $C_{ij} \ge 0$, and let $k_0, \dots, k_r$ be positive integers.  Let $G = H[k_0, \dots, k_r]$ be the graph with vertex set $\bigsqcup_{i=0}^r L_i$, where $|L_i| = k_i$ for each $i$ in which $L_i$ and $L_j$ are completely joined whenever $h_i \sim h_j$, every such edge has Dirac entry $C_{ij}$, and with no edges inside $L_i$ or between $L_i$ and $L_j$ when $h_i \not\sim h_j$; let $G$ have Dirac operator $D$.  Let $\tilde{H}$ be the graph $H$ with Dirac operator $\tilde{C}$ having entries $\tilde{C}_{ij} = \sqrt{k_i k_j} C_{ij}$.  If two of the $L_i$ are singletons, say $L_p = \{x\}$ and $L_{p'} = \{y\}$, then
 \[ \dD(x,y) = d^{\tilde{C}}(h_p, h_{p'}).\]
\end{lemma}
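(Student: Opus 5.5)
We prove the two inequalities separately. Throughout, $u$ ranges over vertices of $G$, and $i(u)$ denotes the index with $u \in L_{i(u)}$.

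\textbf{Lower bound $\dD(x,y) \ge d^{\tilde C}(h_p,h_{p'})$.} Given an admissible $\tilde A = \diag(\tilde a_0,\dots,\tilde a_r)$ for $\tilde C$, we lift it to $A$ on $G$ by setting $a_u = \tilde a_{i(u)}$, so $A$ is constant on each layer. Then $[A,D]$ is the block matrix whose $(L_i,L_j)$ block is $(\tilde a_i - \tilde a_j)C_{ij}J_{k_i\times k_j}$, where $J$ denotes an all-ones matrix. Let $\mathbf{1}_i$ be the all-ones vector on $L_i$ and $\hat{\mathbf 1}_i = \mathbf{1}_i/\sqrt{k_i}$. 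Then
\[ J_{k_i\times k_j} = \sqrt{k_ik_j}\,\hat{\mathbf 1}_i\hat{\mathbf 1}_j^t. \]
Let $U$ be the isometry sending $e_{h_i} \mapsto \hat{\mathbf 1}_i$. Then $[A,D] = U[\tilde A,\tilde C]U^t$, so $\Norm{[A,D]} = \Norm{[\tilde A,\tilde C]} \le 1$. Since $L_p=\{x\}$ and $L_{p'}=\{y\}$, we have $a_x - a_y = \tilde a_p - \tilde a_{p'}$, and taking the supremum over $\tilde A$ gives the bound.

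\textbf{Upper bound $\dD(x,y) \le d^{\tilde C}(h_p,h_{p'})$.} Given an admissible $A$ for $D$, we produce an admissible $\tilde A$ for $\tilde C$ with the same value $a_x - a_y$. The natural choice is to average within layers: $\tilde a_i = \frac1{k_i}\sum_{u\in L_i} a_u$, which leaves $\tilde a_p = a_x$ and $\tilde a_{p'} = a_y$ unchanged.

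One route is a symmetrization argument. The group $\Gamma = \prod_i \Sym(L_i)$ acts on $G$ by automorphisms, and conjugating by the permutation matrix of each $\gamma \in \Gamma$ preserves $D$ and the norm. Hence each permuted $A^\gamma$ is admissible with the same $a_x - a_y$. The admissible set is convex, since $A\mapsto [A,D]$ is linear and the norm is convex, so the average $\bar A$ of $A^\gamma$ over $\Gamma$ is admissible. This $\bar A$ is exactly the layer-constant lift of $\tilde A$. By the computation in the lower-bound step, $\Norm{[\tilde A,\tilde C]} = \Norm{[\bar A,D]} \le 1$.

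An alternative direct route compresses with $U$: one has $U^t[A,D]U = [\tilde A,\tilde C]$. To see this, note that the $(i,j)$ entry equals
\[ \frac{C_{ij}}{\sqrt{k_ik_j}}\sum_{u\in L_i,v\in L_j}(a_u - a_v), \]
and the sum equals $k_ik_j(\tilde a_i - \tilde a_j)$. Then $\Norm{U^tMU}\le\Norm{M}$ finishes the argument. This route avoids the group action entirely, so we would use it.

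\textbf{Main obstacle.} The only delicate point is verifying the identity $U^t[A,D]U = [\tilde A,\tilde C]$, in particular that the $\sqrt{k_ik_j}$ normalizations match the definition of $\tilde C$, and that the diagonal blocks vanish because $G$ has no edges inside any $L_i$. Once this is checked, both directions are one-line norm estimates, and together they give $\dD(x,y) = d^{\tilde C}(h_p,h_{p'})$.
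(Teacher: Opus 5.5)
Your proof is correct, and the lower bound is the paper's argument. The paper shows that for layer-constant $a$ the span $U$ of the normalized layer indicators is $M$-invariant, with $M|_{U^\perp}=0$ and $M|_U=[\diag(\alpha),\tilde C]$; this is exactly your factorization $[A,D]=U[\tilde A,\tilde C]U^t$.

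The upper bound is where you differ. The paper follows your first route. It takes a \emph{maximizing} admissible $a$, justifying that the supremum is attained by saying $H$ is connected, which is not actually a hypothesis of the lemma. It then averages that maximizer over $\Gamma=\prod_i\Sym(L_i)$, using convexity and $\Gamma$-invariance of the admissible set, to obtain a layer-constant maximizer.

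Your compression route sends \emph{every} admissible $A$ directly to the admissible matrix $\tilde A$ of layer averages, via $U^t[A,D]U=[\tilde A,\tilde C]$ and $\Norm{U^tMU}\le\Norm{M}$. Your computation $\sum_{u\in L_i,v\in L_j}(a_u-a_v)=k_ik_j(\tilde a_i-\tilde a_j)$ is right, and the diagonal entries vanish. This version needs no group action, no existence of a maximizer and no connectedness, and it handles the case where both distances are $+\infty$ uniformly.

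The two arguments have the same content. Here the $\Gamma$-average $[\bar A,D]$ of $[A,D]$ equals $UU^t[A,D]UU^t$, so yours is simply the more economical statement. What the paper's symmetrization adds is the structural fact that an optimizer may be chosen constant on each layer.
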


\begin{proof}
 Let $\Gamma = \prod_{i = 0}^r \Sym(L_i) \le \Aut(G)$; note that $\Gamma$ fixes $x$ and $y$, since $L_p$ and $L_{p'}$ are singletons.  The set
 \[ \calK = \{ a \in \R^{V(G)}: \Norm{[\diag(a),D]} \le 1\}\] is convex and invariant under the action of $\Gamma$, and the objective $a_x - a_y$ is $\Gamma$-invariant.  Since $H$ is connected, so is $G$, and the supremum in \eqref{eq:def} is attained (see, for instance, \cite[Section 2.3]{CLPS25}); hence, averaging a maximizing $a$ over $\Gamma$ produces a maximizing $\overline{a} \in \calK$ that is constant on each $L_i$.

 Let $a$ be constant on each $L_i$ with value $\alpha_i$ and let $M = (m_{uv}) = [A,D]$, so $m_{uv} = (\alpha_i - \alpha_j)C_{ij}$ for $u \in L_i$ and $v \in L_j$.  If $e_u$ is the standard basis vector of $\R^{V(G)}$ at the vertex $u$, we define
 \[ b_i \colonequals k_i^{-1/2} \sum_{u \in L_i} e_u,\]
 so $b_i$ is the normalized indicator vector of $L_i$, and define $U \colonequals {\rm span}\{b_0, \dots, b_r\}$.  Thus, $U$ consists of the vectors that are constant on each $L_i$, and $U^\perp$ consists of the vectors whose entries sum to zero over each $L_i$; that is, $U^\perp$ is spanned by the vectors $\xi \in \R^{V(G)}$ that are supported on a single layer $L_i$ and satisfy $\sum_{u \in L_i} \xi_u = 0$.

 Let $\xi$ be such a vector, supported on $L_i$.  For $v \in L_j$,
 \[ (M\xi)_v = \sum_{u \in L_i} m_{vu}\xi_u = (\alpha_j - \alpha_i)C_{ij} \sum_{u \in L_i} \xi_u = 0,\] and, for $v \in L_i$, $(M\xi)_v = 0$ as well, since there are no edges inside $L_i$.  Hence, $M|_{U^\perp} = 0$.  Moreover, for $v \in L_j$,
 \[ (Mb_i)_v = k_i^{-1/2} \sum_{u \in L_i} m_{vu} = k_i^{-1/2} \cdot k_i(\alpha_j -\alpha_i)C_{ij} = k_i^{1/2}(\alpha_j - \alpha_i)C_{ij},\]
 which is constant on $L_j$, so
 \[ Mb_i = \sum_{j = 0}^r \sqrt{k_i k_j} (\alpha_j - \alpha_i) C_{ij}b_j.\]
 Thus, $U$ is $M$-invariant, and, since $\R^{V(G)} = U \oplus U^\perp$ and $M|_{U^\perp} = 0$, $\Norm{M} = \Norm{M|_U}$.  In the orthonormal basis $b_0, \dots, b_r$, the $j,i$-entry of $M|_U$ is $\sqrt{k_i k_j} (\alpha_j - \alpha_i) C_{ij}$, so $M|_U$ is exactly the commutator of $\diag(\alpha_0, \dots, \alpha_r)$ with the Dirac operator $\tilde{C}$ of $\tilde{H}$, whose entries are $\sqrt{k_ik_j} C_{ij}$.  Finally, $a_x - a_y = \alpha_p - \alpha_{p'}$ because $L_p = \{x\}$ and $L_{p'} = \{y\}$, and every $\alpha \in \R^{r+1}$ arises from an $a$ that is constant on each $L_i$.  Therefore, $\dD(x,y) = d^{\tilde{C}}(h_p, h_{p'}).$
\end{proof}

\begin{theorem}
 \label{thm:nonlocal}
 Let $q \ge 1$ and $k \ge 1$.  Let $G = G_{q,k}$ be the graph obtained from the path
 \[ x - A_1 - A_2 - \cdots - A_q - z - B_q - \cdots - B_1 - y\]
 by blowing up each $A_j$ and $B_j$ to $k$ vertices as in Lemma \ref{lem:blowup}, where each edge of the blow up graph has weight $1$, and by further adjoining the edge $\{x,y\}$ of weight $1$.  Then, if $D$ is the Dirac operator of $G$ and $V' = V(G)\backslash\{z\}$,
 \begin{itemize}
  \item[(i)] the graph-theoretic distance and the weighted distance from $z$ to each of $x,y$ equal $q+1$;
  \item[(ii)] $\dD(x,y) \le \frac{4}{\sqrt{k}} + \frac{2(q-1)}{k}$;
  \item[(iii)] $d_{V'}^D(x,y) = 1$.
 \end{itemize}
In particular, for any $\rho > 0$ and $\eta > 0$, we may choose $q$ with $q+1 \ge \rho$ and then $k$ large enough that $\dD(x,y) < \eta$ while $d_{V'}^D(x,y) = 1$; that is, deletion of a single vertex at arbitrarily large graph-theoretic and weighted distance from $x$ and $y$ changes $\dD(x,y)$ by an arbitrarily large factor.
\end{theorem}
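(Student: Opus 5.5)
The plan is to handle the three items separately, using pruning for (i) and (iii) and the blow-up lemma together with the geodesic bound for (ii). Throughout, $G=G_{q,k}$ is viewed as $H[k_0,\dots,k_r]$ in the sense of Lemma \ref{lem:blowup}. Here $H$ is the $(2q+3)$-cycle
\[ x - A_1 - \cdots - A_q - z - B_q - \cdots - B_1 - y - x,\]
with all Dirac entries $1$. The layers for $x$, $y$ and $z$ are singletons, and each $A_j$ and $B_j$ has size $k$. One should first check that this matches the construction. The edge $\{x,y\}$ joins two singleton layers, so it is a single edge of weight $1$. Consecutive layers are completely joined, and there are no edges inside a layer.

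For (i), note that every vertex of layer $A_j$ is at graph distance $j$ from $x$ along the $A$-side. Hence $z$ is at distance $q+1$ from $x$ via $x - A_1 - \cdots - A_q - z$. Any other route from $z$ to $x$ must pass through $y$, which takes at least $q+2$ edges. Since all weights equal $1$, the weighted distance coincides with the graph distance, so both distances equal $q+1$; the same holds for $y$ by symmetry.

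For (iii), after removing $z$, the layers $A_1,\dots,A_q$ are attached to the rest of the graph only through $x$, and similarly the $B$-layers only through $y$. Thus no simple path from $x$ to $y$ enters them, and $\PP(x,y)$ in $G[V']$ is the single edge $\{x,y\}$. By Proposition \ref{prop:prune}, $d^D_{V'}(x,y)$ is the distance across this single edge. That edge is a bridge of $\PP(x,y)$, so Proposition \ref{prop:geod} gives the value $w_{xy}=1$.

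For (ii), apply Lemma \ref{lem:blowup}; this is legitimate because $H$ is connected and $L_x=\{x\}$, $L_y=\{y\}$ are singletons. The lemma gives $\dD(x,y)=d^{\tilde C}(x,y)$ on the weighted $(2q+3)$-cycle $\tilde H$, whose Dirac entries are $\sqrt{k_ik_j}$. Concretely, the entry is $\sqrt{k}$ on the four edges $xA_1$, $A_qz$, $zB_q$, $B_1y$. It is $k$ on the $2(q-1)$ edges $A_jA_{j+1}$ and $B_jB_{j+1}$, and it is $1$ on $xy$. The corresponding weights are therefore $1/\sqrt{k}$, $1/k$ and $1$ respectively. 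Proposition \ref{prop:geod} now bounds $d^{\tilde C}(x,y)$ by the weighted length of the path through $z$, which is
\[ \frac{4}{\sqrt k}+\frac{2(q-1)}{k}. \]
When $q=1$ there are no internal edges, and the bound reduces to $4/\sqrt{k}$, consistent with the formula.

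The concluding sentence then follows directly. Given $\rho$, choose $q\ge \rho-1$. Then choose $k$ large enough that $4/\sqrt{k}+2(q-1)/k<\eta$. With these choices $\dD(x,y)<\eta$, while $d^D_{V'}(x,y)=1$ and $z$ is at distance $q+1\ge\rho$ from both $x$ and $y$.

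The only delicate point is the identification of $G_{q,k}$ with a blow-up satisfying the hypotheses of Lemma \ref{lem:blowup}. In particular, one must confirm that the edge $\{x,y\}$ and the singleton layer $z$ fit the construction, and that the rescaled entries are computed correctly. Everything else is direct.
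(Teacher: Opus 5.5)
Your proposal is correct and follows essentially the same route as the paper: (i) by counting edges along the shortest route with unit weights, (ii) by realizing $G_{q,k}$ as a blow-up of the $(2q+3)$-cycle via Lemma \ref{lem:blowup} and bounding $d^{\tilde C}(x,y)$ by the weighted length of the path through $z$ using Proposition \ref{prop:geod}, and (iii) by pruning to the single edge $\{x,y\}$ of weight $1$. Your extra checks fill in details the paper leaves implicit: that the chord $\{x,y\}$ fits the blow-up framework because both endpoints are singleton layers, and the bridge justification for the value $w_{xy}=1$.
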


\begin{figure}[h]
\centering
\begin{tikzpicture}[scale=0.70, every node/.style={transform shape}]
  \node[vtx,label=below:{$x$}] (x) at (0,0) {};
  \node[lyr] (A1) at (1.7,0) {};
  \node at (1.7,-1.55) {$A_1$};
  \node (d1) at (3.2,0) {$\cdots$};
  \node[lyr] (Am) at (4.7,0) {};
  \node at (4.7,-1.55) {$A_m$};
  \node[vtx,label=below:{$z$}] (z) at (6.4,0) {};
  \node[lyr] (Bm) at (8.1,0) {};
  \node at (8.1,-1.55) {$B_m$};
  \node (d2) at (9.6,0) {$\cdots$};
  \node[lyr] (B1) at (11.1,0) {};
  \node at (11.1,-1.55) {$B_1$};
  \node[vtx,label=below:{$y$}] (y) at (12.8,0) {};
  \foreach \x in {1.7,4.7,8.1,11.1}{\foreach \y in {0.6,0,-0.6}{\node[vtx] at (\x,\y) {};}}
  \draw[thick] (x)--(A1); \draw[thick] (A1)--(d1); \draw[thick] (d1)--(Am);
  \draw[thick] (Am)--(z); \draw[thick] (z)--(Bm); \draw[thick] (Bm)--(d2);
  \draw[thick] (d2)--(B1); \draw[thick] (B1)--(y);
  \draw (x) to[bend left=50] node[wt,midway,above]{$1$} (y);
\end{tikzpicture}

\bigskip

\begin{tikzpicture}[scale=0.70, every node/.style={transform shape}]
  \node[vtx,label=below:{$x$}] (x) at (0,0) {};
  \node[vtx,label=below:{$A_1$}] (A1) at (1.7,0) {};
  \node (d1) at (3.2,0) {$\cdots$};
  \node[vtx,label=below:{$A_q$}] (Am) at (4.7,0) {};
  \node[vtx,label=below:{$z$}] (z) at (6.4,0) {};
  \node[vtx,label=below:{$B_q$}] (Bm) at (8.1,0) {};
  \node (d2) at (9.6,0) {$\cdots$};
  \node[vtx,label=below:{$B_1$}] (B1) at (11.1,0) {};
  \node[vtx,label=below:{$y$}] (y) at (12.8,0) {};
  \draw (x)--(A1) node[wt,midway,above]{$\tfrac1{\sqrt k}$};
  \draw (A1)--(d1) node[wt,midway,above]{$\tfrac1k$};
  \draw (d1)--(Am) node[wt,midway,above]{$\tfrac1k$};
  \draw (Am)--(z) node[wt,midway,above]{$\tfrac1{\sqrt k}$};
  \draw (z)--(Bm) node[wt,midway,above]{$\tfrac1{\sqrt k}$};
  \draw (Bm)--(d2) node[wt,midway,above]{$\tfrac1k$};
  \draw (d2)--(B1) node[wt,midway,above]{$\tfrac1k$};
  \draw (B1)--(y) node[wt,midway,above]{$\tfrac1{\sqrt k}$};
  \draw (x) to[bend left=32] node[wt,midway,above]{$1$} (y);
\end{tikzpicture}
\medskip
\caption{Above: the graph $G_{q,k}$ of Theorem~\ref{thm:nonlocal}. Each ellipse is an
independent set of $k$ vertices, each thick segment denotes a complete join in which
every edge has weight $1$, and the arc is the chord $\{x,y\}$ of weight $1$. Below: the
quotient $\widetilde H$ of Lemma~\ref{lem:blowup}, in which $d(x,y)$ is unchanged;
singleton-to-layer joins acquire a factor $\sqrt k$ and layer-to-layer joins a factor
$k$, so the weights shrink as shown. Deleting the single vertex $z$ makes the two halves
pendant trees and restores $d(x,y)=1$.}
\label{fig:blowup}
\end{figure}

\begin{proof}
 First, (i) follows from the fact that all edges in $G$ have weight $1$, and every path from either $x$ or $y$ to $z$ uses exactly $q+1$ of the edges.

 For (ii), by Lemma \ref{lem:blowup} with layer sizes of $k$ for all $A_i, B_i$ and $1$ for $x, y, z$, we have $\dD(x,y) = d^{\tilde{C}}(x,y)$, where $\tilde{C}$ is the Dirac operator of the cycle
 \[ x \overset{1/\sqrt{k}}{-} A_1 \overset{1/k}{-} \cdots \overset{1/k}{-} A_q \overset{1/\sqrt{k}}{-} z \overset{1/\sqrt{k}}{-} B_q \overset{1/k}{-} \cdots \overset{1/k}{-} B_1 \overset{1/\sqrt{k}}{-} y \overset{1}{-} x,\]
 since edges adjoining a singleton to a set of size $k$ acquire the factor $\sqrt{k}$, and edges adjoining two sets of size $k$ acquire the factor $k$.  By Proposition \ref{prop:geod}, \[d^{\tilde{C}}(x,y) \le \ell^{\tilde{C}}(x,y) \le \frac{4}{\sqrt{k}} + \frac{2(q-1)}{k}.\]

 Finally, (iii) follows from Proposition \ref{prop:prune}, noting that $\PP_{G-z}(x,y) = \{x,y\}$ and $w_{xy} = 1$.
\end{proof}

\begin{remark}
 A weak form of locality does in fact hold once the maximal degree of the graph is controlled: a distant vertex can change a noncommutative distance only by a bounded factor.  Indeed, if $z$ is far enough from $x$ and $y$ that $\ell_{V'}^{D}(x,y) = \ell^D(x,y)$, then combining Proposition \ref{prop:geod} with \cite[Theorem 5]{Bes21} gives
 \[ \frac{d_{V'}^D(x,y)}{\dD(x,y)} \le \frac{\ell_{V'}{D}(x,y)}{\ell^D(x,y)/\Delta(G)} = \Delta(G),\]
 where $\Delta(G)$ denotes the maximum degree of $G$.

 In Theorem \ref{thm:nonlocal}, we have $\Delta(G_{q,k}) = 2k$, and the ratio is on the order of $\sqrt{\Delta(G)}$ for $q$ fixed.  Indeed, assume $k \ge (2q + 2)^2$ so that $4/\sqrt{k} + 2(q-1)/k \le 1$.  By Lemma \ref{lem:blowup}, $\dD(x,y) = d^{\tilde{C}}(x,y)$, where $\tilde{C}$ is the weighted cycle in the proof of Theorem \ref{thm:nonlocal}.  Since $\Delta(\tilde{C}) = 2$ and \[\ell^{\tilde{C}}(x,y) = \min(1, 4/\sqrt{k} + 2(q-1)/k) = 4/\sqrt{k} + 2(q-1)/k,\]
 by \cite[Theorem 5]{Bes21} and Proposition \ref{prop:geod}, we have
 \[ 2/\sqrt{k} + (q-1)/k \le \dD(x,y) \le 4/\sqrt{k} + 2(q-1)/k.\]  Finally, since
 \[ \frac{4}{\sqrt{k}} \le 4/\sqrt{k} + 2(q-1)/k \le \frac{2q + 2}{\sqrt{k}},\]
 it follows that
 \[ \frac{\sqrt{\Delta(G)}}{(2q + 2)\sqrt{2}} \le \frac{d_{V'}^D(x,y)}{\dD(x,y)} \le \frac{\sqrt{\Delta(G)}}{2\sqrt{2}}, \]
i.e., the ratio is on the order of $\sqrt{\Delta(G)}$ (as opposed to $\Delta(G)$).  It would be interesting to know whether the bound of $\Delta(G)$ can actually be attained or whether the correct bound happens to be $\sqrt{\Delta(G)}$, and it would be interesting to know whether the ratio tends to one as the distance from $z$ to $x$ and $y$ goes to infinity when $\Delta(G)$ is bounded.
\end{remark}

\subsection{Deletion of edges far away in the weighted metric.}
\label{ss:farweight}

Proposition \ref{prop:pendant} allows for the deleted edge to be arbitrarily far from the vertices $x$ and $y$ in the graph-theoretic metric (see Corollary \ref{cor:far}), but the hypothesis $\delta_x + \delta_y < \dD(x,y) - d^{D'}(x,y)$ bounds the total weights of the attaching paths; hence, the edge $e$ stays close to $x$ and $y$ in the weighted metric.  However, combining Proposition \ref{prop:pendant} with Lemma \ref{lem:blowup} allows us to remove the restriction in the weighted metric.

\begin{theorem}
 \label{thm:farweight}
 Let $W > 0$ and let $k \ge 400W^2$ be an integer.  Let $G = G(W,k)$ have vertices $x,y$, four further vertices $1,2,3,4$, and two independent sets $L_1$, $L_2$ of size $k$, with edges defined as follows:
 \begin{itemize}
  \item $x$ is joined to every vertex of $L_1$, and every vertex of $L_1$ is joined to $1$, all of weight $W$;
  \item $y$ is joined to every vertex of $L_2$, and every vertex of $L_2$ is joined to $2$, all of weight $W$;
  \item the vertices $\{1,2,3,4\}$ form a $4$-cycle $1 - 2 - 3 - 4 - 1$ with $w_{12} = 5$ and $w_{23} = w_{34} = w_{41} = 1$.
 \end{itemize}
 Let $e = \{1,2\}$.  Then, $e$ is incident to neither $x$ nor $y$, the vertex $1$ is at weighted distance $2W$ from $x$, the vertex $2$ is at weighted distance $2W$ from $y$, and, if $G$ has Dirac operator $D$ and $G - e$ has Dirac operator $D'$,
 \[ \dD(x,y) \ge \sqrt{5} > \frac{11}{5} \ge d^{D'}(x,y).\]
 In particular, taking $W = \rho/2$ and $k = \lceil 100 \rho^2 \rceil$, the deleted edge $e$ lies at weighted distance $\rho$ from both $x$ and $y$, and $\rho$ may be arbitrarily large.
\end{theorem}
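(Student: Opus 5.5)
The plan is to reduce both inequalities to the $4$-cycle of Theorem \ref{thm:main} (with $\ep = 1/5$). The lower bound will come from the blow-up Lemma \ref{lem:blowup}; the upper bound will come from Proposition \ref{prop:pendant}-style reasoning (triangle inequality plus geodesic bounds).

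\textbf{Routine claims.} The distance claims are direct. Every path from $x$ to $1$ passes through $L_1$ using two edges of weight $W$. Any path that avoids $L_1$ must go through $2$, $L_2$ and $y$, and so cannot reach $x$. Hence $\ell^D(x,1) = 2W$, and similarly $\ell^D(y,2) = 2W$.

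\textbf{Lower bound.} I apply Lemma \ref{lem:blowup} with $H$ the graph on $x, \ell_1, 1, 2, 3, 4, \ell_2, y$, where the layer sizes are $k$ for $\ell_1, \ell_2$ and $1$ otherwise. Then $\dD(x,y) = d^{\tilde C}(x,y)$, where $\tilde H$ is the $4$-cycle with weights $5, 1, 1, 1$ together with pendant paths $x - \ell_1 - 1$ and $2 - \ell_2 - y$. Each pendant edge has weight $W/\sqrt{k}$, since the Dirac entry $1/W$ is multiplied by $\sqrt{k}$. In $\tilde H$, I extend an admissible $A$ for the $4$-cycle attaining $a_1 - a_2 \ge \sqrt5$ (Theorem \ref{thm:main}) by constants along the pendant paths, exactly as in the first inequality of Proposition \ref{prop:pendant}. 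This gives $d^{\tilde C}(x,y) \ge \sqrt5$.

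\textbf{Upper bound.} I would not blow down $G - e$, though Lemma \ref{lem:blowup} would apply equally well there. Instead I work directly in $G - e$ with the triangle inequality: $d^{D'}(x,y) \le d^{D'}(x,1) + d^{D'}(1,2) + d^{D'}(2,y)$.

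For the middle term, the subgraph $\PP_{G-e}(1,2)$ is the path $1 - 4 - 3 - 2$. Indeed, $1$ and $2$ have no other connection once $e$ is removed, because $L_1$ and $L_2$ lead only to the pendant parts $x$ and $y$. So Propositions \ref{prop:prune} and \ref{prop:chain} give $d^{D'}(1,2) = 2$.

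The end terms are the main point, since a geodesic bound alone only gives $d^{D'}(x,1) \le 2W$, which is useless for large $W$. Here I use the blow-up again. By pruning, $\PP_{G-e}(x,1)$ is $x$, $L_1$, $1$, which is the blow-up of the path $x - \ell_1 - 1$. By Lemma \ref{lem:blowup}, $d^{D'}(x,1)$ equals the distance in a two-edge path of weights $W/\sqrt{k}$, so $d^{D'}(x,1) \le 2W/\sqrt{k} \le 2W/(20W) = 1/10$, using $k \ge 400W^2$. Likewise $d^{D'}(2,y) \le 1/10$. Summing the three terms gives $d^{D'}(x,y) \le 11/5 < \sqrt5$.

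\textbf{Final statement and main obstacle.} With $W = \rho/2$ and $k = \lceil 100\rho^2\rceil = \lceil 400W^2\rceil$, the hypothesis $k \ge 400W^2$ holds, and $e$ lies at weighted distance $2W = \rho$ from both $x$ and $y$. The main obstacle is making sure that pruning commutes correctly with the blow-up, since Lemma \ref{lem:blowup} requires the endpoints to be singletons. For the end terms this holds because $x$ and $1$ are singleton layers. I also need to check connectivity of the pruned subgraphs, so that the attainment hypothesis used inside Lemma \ref{lem:blowup} holds.
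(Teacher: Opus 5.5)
Your proposal is correct and is essentially the paper's argument. The lower bound is the same blow-up to the quotient, followed by extending the Theorem~\ref{thm:main} witness by constants along the pendant paths (the first inequality of Proposition~\ref{prop:pendant}); the upper bound uses the same ingredients (pruning, the chain formula, and the triangle inequality through $1$ and $2$), the only difference being order: the paper blows down all of $G-e$ to $H-e$ (valid since $e$ joins two singleton layers) and then uses geodesic bounds there, whereas you apply the triangle inequality in $G-e$ itself and invoke Lemma~\ref{lem:blowup} only on the connected pruned pieces $\PP_{G-e}(x,1)$ and $\PP_{G-e}(2,y)$, which works equally well.
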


\begin{proof}
 The vertex set is partitioned into $\{x\}$, $L_1$, $\{1\}$, $\{2\}$, $\{3\}$, $\{4\}$, $L_2$, $\{y\}$, no set contains an edge, and sets adjacent in $H$ are completely joined with edges of constant weight; thus, Lemma \ref{lem:blowup} applies with $k_{L_1} = k_{L_2} = k$ and all other $k_i = 1$.  Since $x$ and $y$ lie in singleton sets, $\dD(x,y) = d^{C}(x,y)$, where $C$ is the Dirac operator of $H$, which is the quotient
 \[
  x\overset{W/\sqrt k}{-\!\!-}L_1\overset{W/\sqrt k}{-\!\!-}1
  \ \bigl[\,\text{$4$-cycle }5,1,1,1\,\bigr]\
  2\overset{W/\sqrt k}{-\!\!-}L_2\overset{W/\sqrt k}{-\!\!-}y .
\]
The edge $e$ joins two singleton sets, so $G - e$ is the blow-up of $H - e$ with the same set sizes, and, likewise, $d^{D'}(x,y) = d^{C'}(x,y)$, where $D'$ and $C'$ denote the Dirac operators of $G - e$ and $H - e$, respectively.

It hence suffices to prove the two inequalities in $H$.  Let $G_0$ be the $4$-cycle on $\{1,2,3,4\}$ with the stated weights; let $D_0$ be its Dirac operator, and let $D_0'$ be the Dirac operator of $G_0 - e$.  By Theorem \ref{thm:main} with $\ep = 1/5$ we have $d^{D_0}(1,2) \ge \sqrt{5}$ and $d^{D_0'}(1,2) = 2$, so $d^{D_0}(1,2) - d^{D_0'}(1,2) > 1/5$.  The graph $H$ is obtained from $G_0$ by attaching at $1$ the path $1 - L_1 - x$ and at $2$ the path $2 - L_2 - y$, of total weights $\delta_x = \delta_y = 2W/\sqrt{k} \le 1/10$ (since $k \ge 400W^2$); hence, \[\delta_x + \delta_y \le \frac{1}{5} < d^{D_0}(1,2) - d^{D_0'}(1,2),\]
and, by Proposition \ref{prop:pendant}, we have
\[ d^{C'}(x,y) \le d^{D_0'}(1,2) + \delta_x + \delta_y \le \frac{11}{5} < \sqrt{5} \le d^{D_0}(1,2) \le d^{C}(x,y).\]
Finally, the only neighbors of $x$ are the vertices of $L_1$, each of whose only other neighbor is $1$, so $\ell^D(x,1) = 2W$, and, similarly, $\ell^D(y,2) = 2W$.
\end{proof}

\begin{figure}[h]
\centering
\begin{tikzpicture}[scale=0.85, every node/.style={transform shape}]
  \node[vtx,label=below:{$x$}] (x) at (0,0) {};
  \node[lyr] (L) at (1.6,0) {};
  \node at (1.6,-1.5) {$L_1$};
  \foreach \y in {0.6,0,-0.6}{\node[vtx] at (1.6,\y) {};}
  \node[vtx,label=below left:{$1$}] (v1) at (3.2,0) {};
  \node[vtx,label=below right:{$2$}] (v2) at (5.6,0) {};
  \node[vtx,label=above:{$4$}] (v4) at (3.9,1.3) {};
  \node[vtx,label=above:{$3$}] (v3) at (4.9,1.3) {};
  \node[lyr] (M) at (7.2,0) {};
  \node at (7.2,-1.5) {$L_2$};
  \foreach \y in {0.6,0,-0.6}{\node[vtx] at (7.2,\y) {};}
  \node[vtx,label=below:{$y$}] (y) at (8.8,0) {};
  \draw[thick] (x)--(L) node[wt,midway,above]{$W$};
  \draw[thick] (L)--(v1) node[wt,midway,above]{$W$};
  \draw[thick] (v2)--(M) node[wt,midway,above]{$W$};
  \draw[thick] (M)--(y) node[wt,midway,above]{$W$};
  \draw[del] (v1)--(v2) node[wt,midway,below]{$e$, $5$};
  \draw (v1)--(v4) node[wt,midway,left]{$1$};
  \draw (v4)--(v3) node[wt,midway,above]{$1$};
  \draw (v3)--(v2) node[wt,midway,right]{$1$};
\end{tikzpicture}
\medskip
\caption{Theorem~\ref{thm:farweight}. Each ellipse is an independent set of $k$ vertices
and each thick segment a complete join, every edge of which has weight $W$. Deleting the
dashed edge $e$, at weighted distance $2W$ from both $x$ and $y$, strictly decreases
$d(x,y)$.}
\label{fig:farweight}
\end{figure}

\begin{remark}
 \label{rem:combined}
 Both the graph-theoretic and the weighted distance can be made large simultaneously: replacing each of $L_1$ and $L_2$ by a chain of $q$ sets, each of size $k$, as in Theorem \ref{thm:nonlocal} gives attaching paths of total weight $2W/\sqrt{k} + (q-1)W/k$ in the quotient while $e$ sits at graph-theoretic distance $q+1$ and weighted distance $(q+1)W$ from each of $x$ and $y$.  Again, this construction requires both $x$ and $y$ to have large degree.
\end{remark}

\section*{Declaration of Generative AI Use}

Claude Opus 5 and Fable 5 were used to draw the figures and derive the examples.  Initial versions of the proofs in this manuscript were conceived by Claude Opus 5, Fable 5, and Opus 5.5 in a series of chats and subsequently verified independently by the author.  The present draft was written by the author, and the author assumes responsibility for all content in the paper.

\end{document}